\documentclass[11pt, twoside]{article}

\usepackage{amssymb}
\usepackage{amsmath}
\usepackage{amsthm}
\usepackage{color}

\usepackage{indentfirst}

\usepackage{txfonts}

\allowdisplaybreaks

\pagestyle{myheadings}\markboth{\footnotesize\rm\sc
Long Huang, Xiaofeng Wang and Zhicheng Zeng}
{\footnotesize\rm\sc $L^{\vec{p}}-L^{\vec{q}}$ Boundedness of Multiparameter
Forelli-Rudin Type Operators
}

\textwidth=15cm
\textheight=21cm
\oddsidemargin 0.46cm
\evensidemargin 0.46cm

\parindent=13pt

\def\va{\vec{a}}
\def\vb{\vec{b}}
\def\vc{\vec{c}}
\def\vt{\vec{\beta}}
\def\vf{\vec{\alpha}}
\def\vp{\vec p}
\def\vq{\vec{q}}
\def\vg{\vec \gamma}
\def\lv{{L^{\vp}_{\vf}}}
\def\lt{{L^{\vq}_{\vt}}}

\def\rr{{\mathbb R}}

\def\cn{{\mathbb{C}^n}}

\def\bn{{\mathbb{B}_n}}

\def\fz{\infty }
\def\az{\alpha}
\def\bz{\beta}

\def\vaz{\varepsilon}

\def\lf{\left}
\def\r{\right}

\def\hs{\hspace{0.35cm}}
\def\ls{\lesssim}
\def\gs{\gtrsim}

\def\noz{\nonumber}

\def\XXint#1#2#3{{\setbox0=\hbox{$#1{#2#3}{\int}$ }
\vcenter{\hbox{$#2#3$ }}\kern-.6\wd0}}

\DeclareMathOperator{\esssup}{ess\,sup}

\def\f{\frac}

\def\({\left(}
\def \){ \right)}



\newtheorem{theorem}{Theorem}[section]
\newtheorem{lemma}[theorem]{Lemma}

\newtheorem{proposition}[theorem]{Proposition}
\newtheorem{observation}[theorem]{Observation}

\newtheorem{question}[theorem]{Question}
\theoremstyle{definition}

\renewcommand{\appendix}{\par
   \setcounter{section}{0}%
   \setcounter{subsection}{0}%
   \setcounter{subsubsection}{0}%
   \gdef\thesection{\@Alph\c@section}%
   \gdef\thesubsection{\@Alph\c@section.\@arabic\c@subsection}%
   \gdef\theHsection{\@Alph\c@section.}%
   \gdef\theHsubsection{\@Alph\c@section.\@arabic\c@subsection}%
   \csname appendixmore\endcsname
 }

\numberwithin{equation}{section}

\begin{document}

\arraycolsep=1pt

\title{\bf\Large
$L^{\vec{p}}-L^{\vec{q}}$ Boundedness of Multiparameter Forelli-Rudin Type
Operators on the Product of Unit Balls of $\mathbb{C}^n$
\footnotetext{\hspace{-0.35cm} 2020 {\it
Mathematics Subject Classification}. Primary 42B35;
Secondary 42B99, 47B38, 32A36.
\endgraf {\it Key words and phrases.}
Mixed-norm Lebesgue space,
$L^{\vec p}-L^{\vec q}$ boundedness,
Schur's test, Forelli-Rudin type operator,
Bergman projection,
Berezin transform.
\endgraf
The first author would like to thank Prof. Dachun Yang for some helpful suggestions on 
this article. The authors would like to thank Prof. Ruhan Zhao and Prof. Kehe Zhu for 
several comments on Schur's tests and some nice suggestions. 
The authors would also like to thank the referee for several insightful comments.
L. Huang is supported by the National Natural Science Foundation of China (Grant No. 12201139)
and Guangdong Basic and Applied Basic Research Foundation (Grant No. 2021A1515110905).
X. Wang is supported by the National Natural Science Foundation of China (Grant No. 11971125).}}
\author{Long Huang, Xiaofeng Wang\footnote{Corresponding author,
E-mail: \texttt{wxf@gzhu.edu.cn}}\ \
and Zhicheng Zeng
}
\date{}
\maketitle

\vspace{-0.8cm}

\begin{center}
\begin{minipage}{13cm}
{\small {\bf Abstract}\quad
In this work, we provide a complete characterization of the boundedness of two classes of multiparameter Forelli-Rudin type operators from one mixed-norm Lebesgue space $L^{\vec p}$ to another space $L^{\vec q}$, when $1\leq \vec{p}\leq \vec q<\infty$, equipped with possibly different weights. Using these characterizations, we establish the necessary and sufficient conditions for both $L^{\vec p}-L^{\vec q}$ boundedness of the weighted multiparameter Berezin transform and $L^{\vec p}-A^{\vec q}$ boundedness of the weighted multiparameter Bergman projection, where $A^{\vec q}$ denotes the mixed-norm Bergman space.
Our approach presents several novelties. Firstly, we conduct refined integral estimates of holomorphic functions on the unit ball in $\mathbb{C}^n$. Secondly, we adapt the classical Schur's test to different weighted mixed-norm Lebesgue spaces. These improvements are crucial in our proofs and allow us to establish the desired characterization and sharp conditions.
}
\end{minipage}
\end{center}


\vspace{0.2cm}

\section{Introduction\label{s0}}

As a natural generalization of the classical Lebesgue space $L^p$,
the mixed-norm Lebesgue space $L^{\vec p}$ was first introduced
and studied by Benedek and Panzone \cite{bp61} in 1961.
After that, numerous other function spaces with
mixed norms were introduced and studied.
For instance, Chen-Sun \cite{cs} studied the
mixed-norm (iterated) weak Lebesgue spaces; 
Cleanthous-Georgiadis \cite{cg19,cg22} investigated the mixed-norm modulation spaces and product modulation spaces; Cleanthous et al. \cite{cgn17} introduced 
anisotropic mixed-norm Hardy spaces and then the first author and his collaborators  
further considered various real-variable characterizations and their applications
about these mixed-norm Hardy spaces in \cite{hcy,hlyy,hlyy18,hyy21};
Cleanthous et al. \cite{cgn19,cgn19-1}, Georgiadis et al. \cite{gn16,gjn17} as well as the first author and his collaborators \cite{hlyy20} focused on mixed-norm Triebel-Lizorkin or Besov spaces.
Moreover, since the mixed-norm Lebesgue space has finer structures
than the classical one, and the wider usefulness of mixed-norm function spaces
within the context of harmonic analysis, partial differential equations, and geometric inequalities,
they have attached considerable interest (see, for instance,
\cite{bm16,bm17,bm22,cs20,cs22,cgn19-2,hwyy23,s67}).

This article focuses on the action of two classes of multiparameter Forelli-Rudin type operators on weighted mixed-norm Lebesgue spaces, which are natural extensions of the well-known Forelli-Rudin type operators originally introduced by Forelli and Rudin \cite{fr74} in 1974. Additionally, we address the characterizations for the boundedness of multiparameter Forelli-Rudin type operators for critical points.
Our proofs present several novelties. Firstly, we conduct refined integral estimates of holomorphic functions on the unit ball in $\mathbb{C}^n$. Secondly, we adapt the classical Schur's test to different weighted mixed-norm Lebesgue spaces. These improvements are crucial in our proofs and allow us to establish the desired characterization. As applications, we also establish the necessary and sufficient conditions for both $L^{\vec p}-L^{\vec q}$ boundedness of the weighted multiparameter Berezin transform and $L^{\vec p}-A^{\vec q}$ boundedness of the weighted multiparameter Bergman projection, where $A^{\vec q}$ denotes the mixed-norm Bergman space. Our motivation of this work comes from the following classical results in the field.

Let $n$ be a fixed positive integer and
$\mathbb{C}^n$ the vector space of all ordered
$n$-tuples $z:=(z_1,\ldots,z_n)$ of complex numbers,
with inner product $\langle z,\omega\rangle:=z_1\overline{\omega}_1+
\cdots+z_n\overline{\omega}_n$ and norm $|z|:=\langle z,z\rangle^{\frac 12}.$
Let $\mathbb{B}_n:=\{z\in \mathbb{C}^n:\
|z|<1\}$ be the open unit ball in $\mathbb{C}^n$ and
$\mathbb{S}_n:=\{z\in \mathbb{C}^n:\ |z|=1\}$ denote its boundary.
For given $a,b,c\in \mathbb{R}$, the following general
Forelli-Rudin type operators, introduced by Kurens-Zhu \cite{kz06},
are defined by
\begin{align*}
T_{a,\,b,\,c}f(z)=(1-|z|^2)^{a}\int_{\mathbb{B}_n}\frac{(1-|u|^2)^{b}}{(1-\langle z,u \rangle)^{c}}f(u)\,d\upsilon(u)
\end{align*}
and
\begin{align*}
S_{\negthinspace a,\,b,\,c}f(z)=(1-|z|^2)^{a}\int_{\mathbb{B}_n}\frac{(1-|u|^2)^{b}}{|1-\langle z,u \rangle|^{c}}f(u)\,d\upsilon(u),
\end{align*}
where $d\upsilon$ is the normalized volume measure on
$\mathbb{B}_n$ satisfying $\upsilon(\mathbb{B}_n)=1$.
Recall that the history of study for the special case of these operators can be traced back to
the work of Stein \cite{s73}, in which Stein proved that the operator
$T_{0,\,0,\,n+1}$ is bounded on $L^p(\mathbb{B}_n)$ for $1<p<\fz$.
In 1974, Forelli-Rudin \cite{fr74} then proved that $T_{0,\,\sigma+it,\,n+1+\sigma+it}$ is bounded on $L^p(\mathbb{B}_n)$ if and only if $$(\sigma+1)p>1,$$ where $1\le p<\fz$, $\sigma>-1$, $t\in \rr$,
and $i=\sqrt{-1}$.
Later, Kolaski \cite{k79} further investigated the operator
$T_{0,\,\sigma+it,\,n+1+\sigma+it}$ from the Bergman projection perspective.
In 1991, Zhu \cite{z91} showed that the necessary and sufficient condition
of boundedness of the general operator $T_{a,\,b,\,c}$ on
$L^p(d A_{\lambda})$, with
$c=n+1+a+b$ when $n=1$, is $$-pa<\lambda+1<p(b+1),$$
where $1\le p<\fz$ and $dA_\lambda(z):=(1-|z|^2)^{\lambda}\,dA(z)$
with $\lambda\in\rr$
and $dA$ being the normalized area measure in the open unit disc
of $\mathbb{C}$. This result nowadays has been extended to the
high-dimensional case by Kures-Zhu \cite{kz06} for all positive
numbers $c$. On the other hand, the work \cite{z15,z23} further determined
the boundedness of operators $T_{0,\,b,\,c}$ and $S_{\negthinspace 0,\,b,\,c}$ from $L^p$ spaces to $L^q$ spaces with $1\le p\le q<\fz$. For more results in this direction, we refer to \cite{hw23,ku19,qwg23,qwg23',zz23}. 
For compactness of Forelli-Rudin type operators, we refer to \cite{dw22},
in which Ding and Wang gave some compactness characterizations of operators $T_{0,\,0,\,c}$ and $S_{0,\,0,\,c}$.
Very recently,
Zhu \cite{z22} extended several existing boundedness results of operators $T_{a,\,b,\,c}$ using 
embeddings of Bergman spaces and fractional radial differential operators, and further characterized
when the embedding of one weighted Bergman space into another is compact.

These two operators have gained significant attention due to their connection to some important operators in analysis and operator theory. For instance, the holomorphic Bergman projection on $\bn$ is a special case of the Forelli-Rudin type operator $T_{a,\,b,\,c}$ with $a=b=0$ and $c=n+1$. The Berezin transform on $\bn$ is a special case of the Forelli-Rudin type operator $S_{\negthinspace a,\,b,\,c}$ with $a=n+1$, $b=0$, and $c=2(n+1)$. It is well known that the Bergman projection and Berezin transform play a fundamental role in the theory of holomorphic function spaces, and have been extensively studied in the context of Toeplitz and Hankel operators, as well as other related areas (see \cite{zbook05,zbook07,z21}).

On the other hand, multiparameter operators have also been a significant research area in analysis and operator theory. Many authors have studied the boundedness of multipliers and singular integral operators on multiparameter function spaces over the past few decades. Examples include Ricci-Stein's work on multiparameter singular integrals and maximal functions \cite{rs92}, M\"{u}ller-Ricci-Stein's study of Marcinkiewicz multipliers and multiparameter structure on Heisenberg groups \cite{mrs95,mrs96}, and Fefferman-Pipher's results on multiparameter operators and sharp weighted inequalities \cite{fp97}. Building on the insights and also the aforementioned conclusions on classical Forelli-Rudin type operators, we extend $T_{a,b,c}$ and $S_{\negthinspace a,b,c}$ to the multiparameter cases in this article.
To be exact, we focus on the following two classes of multiparameter operators
\begin{align*}
T_{\va,\,\vb,\,\vc}f(z,w)=(1-|z|^2)^{a_1}(1-|w|^2)^{a_2}
\int_{\mathbb{B}_n}\int_{\mathbb{B}_n}
\frac{(1-|u|^2)^{b_1}(1-|\eta|^2)^{b_2}}{(1-\langle z,u \rangle)^{c_1}(1-\langle w,\eta \rangle)^{c_2}}f(u,\eta)\,d\upsilon(u)\,d\upsilon(\eta)
\end{align*}
and
\begin{align*}
S_{\negthinspace\va,\,\vb,\,\vc}f(z,w)=(1-|z|^2)^{a_1}(1-|w|^2)^{a_2}
\int_{\mathbb{B}_n}\int_{\mathbb{B}_n}\frac{(1-|u|^2)^{b_1}(1-|\eta|^2)^{b_2}}{|1-\langle z,u \rangle|^{c_1}|1-\langle w,\eta \rangle|^{c_2}}f(u,\eta)\,d\upsilon(u)\,d\upsilon(\eta),
\end{align*}
here and thereafter, vectors $\va:=(a_1,a_2)$, $\vb:=(b_1,b_2)$, $\vc:=(c_1,c_2)\in \rr^2$. Obviously, these two integral operators are natural extensions of the operators
$T_{a,\,b,\,c}$ and $S_{\negthinspace a,\,b,\,c}$. In addition, we would like to clarify that we limit our focus to the two-dimensional case of the parameters $\va,\ \vb$, and $\vc$ in this article. However, the higher dimensional case can be approached in a similar manner.
For these multiparameter Forelli-Rudin type operators, we
are interested in the following questions:

\begin{question}\label{1q1}
	On which spaces are these operators bounded?
	Are there two distinct spaces $X$ and $Y$ such that operators $T_{\va,\,\vb,\,\vc}$ and $S_{\negthinspace \va,\,\vb,\,\vc}$ are bounded from $X$ to $Y$?
\end{question}

\begin{question}\label{1q2}
If such spaces $X$ and $Y$ exist, what are the necessary and sufficient conditions that ensure the boundedness of $T_{\va,\,\vb,\,\vc}$ and $S_{\negthinspace \va,\,\vb,\,\vc}$?
\end{question}

In this article, we not only address Questions \ref{1q1} and \ref{1q2}, but we also provide a precise determination of the conditions under which the operators $T_{\va,\,\vb,\,\vc}$ and $S_{\negthinspace \va,\,\vb,\,\vc}$ are bounded. Specifically, we demonstrate that the class of weighted mixed-norm Lebesgue spaces provides a suitable framework for analyzing the behavior of the Forelli-Rudin type operators $T_{\va,\,\vb,\,\vc}$ and $S_{\negthinspace \va,\,\vb,\,\vc}$\,.
Notice that, for any given $\vp:=(p_1,p_2)\in[1,\fz]^2$ and
$\vf:=(\alpha_1,\alpha_2)\in(-1,\fz)^2$,
the weighted mixed-norm Lebesgue space $\lv:=L^{\vp}(\bn\times\bn,d\upsilon_{\alpha_1}d\upsilon_{\alpha_2})$
is defined to be the set of all
measurable functions $f$ such that
$$\|f\|_{\lv}:=\lf\{\int_{\bn}\lf[\int_{\bn}|f(z,w)|^{p_1}
\,d\upsilon_{\alpha_1}(z)\r]^{\f{p_2}{p_1}}\, d\upsilon_{\alpha_2}(w)\r\}^{\f{1}{p_2}}<\fz$$
with the usual modifications made when $p_i=\fz$
for some $i\in \{1,2\}$ (see \cite{bp61,cs20,cg19,cg22,hlyy,hlyy18} for the unweighted mixed-norm Lebesgue spaces
and their applications). Here and thereafter, for any $\theta\in(-1,\fz)$,
$d\upsilon_\theta(z):=c_\theta(1-|z|^2)^\theta\,d\upsilon(z)$
and $$ c_\theta:=\frac{\Gamma(n+\theta+1)}{n!\Gamma(\theta+1)}$$ is the
normalized constant such that $\upsilon_\theta(\bn)=1$, where $\Gamma$ denotes the Gamma function.
Clearly, when $p_1=p_2=p$ and $\alpha_1=\alpha_2=\alpha$,
the space $\lv$ then goes back to the weighted Lebesgue space $L^p_\alpha$.

With the notion of mixed-norm Lebesgue spaces, our main results are stated as follows.
In the rest of this article,
we always suppose $\vf:=(\alpha_1,\alpha_2)\in(-1,\fz)^2$
and $\vec{\beta}:=(\beta_1,\beta_2)\in(-1,\fz)^2$.

\begin{theorem}\label{0t1}
Let $\vp:=(p_1,p_2)$ and $\vq:=(q_1,q_2)$
satisfy $1<p_-\le p_+\le q_-<\fz$ with
$p_+:=\max\{p_1,p_2\}$, $p_-:=\min\{p_1,p_2\}$, and $q_-:=\min\{q_1,q_2\}$.
Then the following statements are equivalent.
\begin{enumerate}
\item[{\rm(i)}]
The operator $S_{\negthinspace \va,\,\vb,\,\vc}$ is
bounded from $\lv$ to $\lt$\,.
\vspace{-0.2cm}
\item[{\rm(ii)}]
The operator $T_{\va,\,\vb,\,\vc}$ is bounded from $\lv$ to $\lt$\,.
\item[{\rm(iii)}]
The parameters satisfy that, for any $i\in\{1,2\}$,
\begin{align*}
   \left\{
   \begin{aligned}
   &-q_i a_i<\beta_i+1,\ \ \alpha_i+1<p_i(b_i+1),\\
   &c_i\le n+1+a_i+b_i+\frac{n+1+\beta_i}{q_i}-
   \frac{n+1+\alpha_i}{p_i}.
   \end{aligned}
   \right.
\end{align*}
\end{enumerate}
\end{theorem}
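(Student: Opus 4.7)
My plan is to prove the equivalence via the cyclic chain (i) $\Rightarrow$ (ii) $\Rightarrow$ (iii) $\Rightarrow$ (i). The implication (i) $\Rightarrow$ (ii) follows instantly from the pointwise domination $|T_{\va,\,\vb,\,\vc}f(z,w)|\le S_{\negthinspace\va,\,\vb,\,\vc}|f|(z,w)$ together with the monotonicity of the $\lv$-norm with respect to absolute values.

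For the necessity step (ii) $\Rightarrow$ (iii), I would test the operator on the product family
\[
f_{s_1,\,s_2}(u,\eta):=(1-|u|^2)^{s_1}(1-|\eta|^2)^{s_2},
\]
with parameters $s_1,s_2$ taken just above the threshold $-(1+\alpha_i)/p_i$ so that $f_{s_1,\,s_2}\in\lv$. Applying the classical Forelli-Rudin asymptotic
\[
\int_{\bn}\f{(1-|u|^2)^{\sigma}}{(1-\langle z,u\rangle)^{n+1+\sigma+\lambda}}\,d\upsilon(u)\sim(1-|z|^2)^{-\lambda},\qquad \sigma>-1,\ \lambda>0,
\]
independently to each of the two factors of the product kernel produces an explicit power-type expression for $T_{\va,\,\vb,\,\vc}f_{s_1,\,s_2}(z,w)$. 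Comparing the $\lv$- and $\lt$-norms via the elementary identity $\int_{\bn}(1-|z|^2)^t\,d\upsilon(z)<\fz\iff t>-1$, and then letting $s_i$ tend to its critical endpoint, I should recover all three inequalities in (iii): the bounds $-q_ia_i<\beta_i+1$ and $\alpha_i+1<p_i(b_i+1)$ arise from the separate integrability conditions forcing $T_{\va,\,\vb,\,\vc}f_{s_1,\,s_2}\in\lt$ and $f_{s_1,\,s_2}\in\lv$ respectively, while the third inequality on $c_i$ emerges from equating the resulting exponents of $(1-|z|^2)$ and $(1-|w|^2)$ on the two sides.

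For the sufficiency step (iii) $\Rightarrow$ (i), I would adapt the classical Schur test to the weighted mixed-norm setting. The kernel of $S_{\negthinspace\va,\,\vb,\,\vc}$ factors as $K_1(z,u)K_2(w,\eta)$ with
\[
K_i(z,u):=\f{(1-|z|^2)^{a_i}(1-|u|^2)^{b_i}}{|1-\langle z,u\rangle|^{c_i}},
\]
which reduces the problem to a coordinatewise analysis. I would take product auxiliary functions $h(u,\eta)=h_1(u)h_2(\eta)$ with $h_i(u)=(1-|u|^2)^{t_i}$, and choose $t_1,t_2$ so that, under the hypotheses of (iii), the refined Forelli-Rudin estimates simultaneously verify a one-variable Schur-type inequality in each coordinate. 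The mixed-norm version is then assembled by applying the Schur estimate in $(w,\eta)$ for each fixed $z$, passing the resulting integral through the $L^{q_1}_z$-norm via Minkowski's integral inequality (whose direction is legitimized by the hypothesis $p_+\le q_-$), and finally applying a Schur estimate in $(z,u)$.

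The hard part will be the critical regime $c_i=n+1+a_i+b_i+(n+1+\beta_i)/q_i-(n+1+\alpha_i)/p_i$, at which the naive choice of Schur exponents $t_i$ causes the auxiliary Forelli-Rudin integrals to blow up logarithmically. Circumventing this obstruction will require a delicate simultaneous tuning of $t_1$ and $t_2$ across both coordinates, together with the refined integral estimates of holomorphic functions on $\bn$ flagged in the introduction. The strict inequalities $-q_ia_i<\beta_i+1$ and $\alpha_i+1<p_i(b_i+1)$ in (iii) provide the necessary slack in each variable to accommodate the possibly sharp inequality for $c_i$, and the bound $p_+\le q_-$ is precisely what makes the interchange of the mixed norm with the iterated integral permissible.
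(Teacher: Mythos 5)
Your outline of (i) $\Rightarrow$ (ii) and of the sufficiency step matches the paper in spirit (the paper's Proposition \ref{1p1} is exactly the mixed-norm Schur test you describe, with product auxiliary functions $h_1(u,\eta)=(1-|u|^2)^{s_1}(1-|\eta|^2)^{s_2}$, $h_2(z,w)=(1-|z|^2)^{r_1}(1-|w|^2)^{r_2}$; the critical case $c_i=n+1+a_i+b_i+\frac{n+1+\beta_i}{q_i}-\frac{n+1+\alpha_i}{p_i}$ is handled not by any logarithmic refinement but simply by choosing $r_i,s_i$ in the interior of the admissible open intervals, and the subcritical case by the trivial domination $|1-\langle z,u\rangle|^{\sigma_i}\le 2^{\sigma_i}$). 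The genuine problem is your necessity step (ii) $\Rightarrow$ (iii). The asymptotic you invoke, $\int_{\bn}(1-|u|^2)^{\sigma}(1-\langle z,u\rangle)^{-(n+1+\sigma+\lambda)}\,d\upsilon(u)\sim(1-|z|^2)^{-\lambda}$, is false for the holomorphic (unmodulated) kernel: since $u\mapsto(1-\langle z,u\rangle)^{-c}$ is anti-holomorphic and the density $(1-|u|^2)^{\sigma}$ is radial, the mean value property gives $\int_{\bn}(1-|u|^2)^{\sigma}(1-\langle z,u\rangle)^{-c}\,d\upsilon(u)=n\,\mathrm{B}(n,\sigma+1)$, a constant independent of $z$ and of $c$ (this is precisely Lemma \ref{anlyticint} of the paper). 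Consequently $T_{\va,\vb,\vc}f_{s_1,s_2}$ is just $(1-|z|^2)^{a_1}(1-|w|^2)^{a_2}$ times a constant, and your radial test family detects only the condition $-q_ia_i<\beta_i+1$; it gives no information whatsoever about $c_i$.

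Moreover, even if you replaced $T$ by $S$ (where the asymptotic with $|1-\langle z,u\rangle|^{c}$ does hold, Lemma \ref{asymptotic}), radial test functions would only yield the weaker bound $c_i\le n+1+a_i+b_i+\frac{1+\beta_i}{q_i}-\frac{1+\alpha_i}{p_i}$, which exceeds the sharp bound by $n(\frac{1}{p_i}-\frac{1}{q_i})\ge0$ whenever $p_i<q_i$. The sharp constraint on $c_i$ requires test functions concentrated near a boundary point, namely the normalized reproducing kernels $f_{\xi,\zeta}(z,w)=\frac{(1-|\xi|^2)^{n+1+b_1-(n+1+\alpha_1)/p_1}}{(1-\langle z,\xi\rangle)^{n+1+b_1}}\cdot\frac{(1-|\zeta|^2)^{n+1+b_2-(n+1+\alpha_2)/p_2}}{(1-\langle w,\zeta\rangle)^{n+1+b_2}}$ with $|\xi|,|\zeta|\to1^-$; for these, $T_{0,\vb,\vc}f_{\xi,\zeta}$ can be computed exactly via the reproducing formula (Lemmas \ref{reproduce} and \ref{test2}), and Lemma \ref{asymptotic}(iii) applied to $\int_{\bn}(1-|z|^2)^{\beta_1}|1-\langle\xi,z\rangle|^{-c_1q_1}\,d\upsilon(z)$ then forces the stated inequality on $c_i$ (Lemma \ref{2l7}). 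Separately, the condition $\alpha_i+1<p_i(b_i+1)$ is obtained in the paper not from integrability of $f_{s_1,s_2}$ (which constrains only $s_i$) but by testing the adjoint $T_{0,\vb,\vc}^*$ on radial functions and requiring $(1-|z|^2)^{b_1-\alpha_1}(1-|w|^2)^{b_2-\alpha_2}\in L^{\vp'}_{\vf}$ (Lemmas \ref{boundeddual}, \ref{test1}, \ref{2l1}); your route of forcing $b_i+s_i>-1$ would additionally need an argument ruling out cancellation in the holomorphic kernel. You should replace the necessity argument accordingly.
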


We also establish the following three $\lv-\lt$ boundedness characterizations for
the endpoints of exponents $\vp:=(1,p_2)$, $\vp:=(p_1,1)$, and $\vp:=(1,1)$, respectively,
where $1<p_1,\, p_2<\fz$.

\begin{theorem}\label{0t2}
Let $\vp:=(1,p_2)$ and $\vq:=(q_1,q_2)$
satisfy $1<p_2\le q_-<\fz$ with
$q_-:=\min\{q_1,q_2\}$.
Then the following statements are equivalent.
\begin{enumerate}
\item[{\rm(i)}]
The operator $S_{\negthinspace \va,\,\vb,\,\vc}$ is
bounded from $\lv$ to $\lt$\,.
\vspace{-0.2cm}
\item[{\rm(ii)}]
The operator $T_{\va,\,\vb,\,\vc}$ is bounded from $\lv$ to $\lt$\,.
\item[{\rm(iii)}]
The parameters satisfy that, for any $i\in\{1,2\}$,
\begin{align*}
     \left\{
     \begin{aligned}
     &-q_i a_i<\beta_i+1,\ \ \alpha_1=b_1,\ \ c_1< a_1+\frac{n+1+\beta_1}{q_1}, \\
     &\alpha_2+1<p_2(b_2+1),\ \ c_2\le n+1+a_2+b_2+\frac{n+1+\beta_2}{q_2}-
     \frac{n+1+\alpha_2}{p_2}.
     \end{aligned}
     \right.
     \end{align*}
\end{enumerate}
\end{theorem}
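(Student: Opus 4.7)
The plan is to follow the three-step structure of the proof of Theorem \ref{0t1}, with modifications to accommodate the endpoint $p_1=1$. The implication (i) $\Rightarrow$ (ii) is immediate from the pointwise inequality $|T_{\va,\vb,\vc} f(z,w)|\le S_{\va,\vb,\vc}(|f|)(z,w)$, which holds because the kernels of $T_{\va,\vb,\vc}$ and $S_{\va,\vb,\vc}$ differ only by a unimodular factor.

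For (ii) $\Rightarrow$ (iii), the strategy is to test the operator on carefully chosen families and read off the parameter conditions from the resulting estimates. Testing with $f\equiv 1$ and invoking the standard Forelli-Rudin integral lemma---which asserts that for $t>-1$ the integral $\int_{\bn}\frac{(1-|u|^2)^t}{|1-\langle z,u\rangle|^s}\,d\upsilon(u)$ is bounded, logarithmic, or comparable to $(1-|z|^2)^{n+1+t-s}$ according to whether $s<n+1+t$, $s=n+1+t$, or $s>n+1+t$---yields the conditions $-q_i a_i<\beta_i+1$ for $i=1,2$. Choosing power-type test functions $f(u,\eta)=(1-|u|^2)^{s_1}(1-|\eta|^2)^{s_2}$ and driving $s_1,s_2$ toward their integrability thresholds $-1-\alpha_1$ and $-\frac{1+\alpha_2}{p_2}$ respectively delivers the constraints on $c_1,c_2$ together with $\alpha_2+1<p_2(b_2+1)$; the strict inequality $c_1<a_1+\frac{n+1+\beta_1}{q_1}$ arises from the need to avoid the logarithmic degeneration of the Forelli-Rudin integral at the critical exponent. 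To extract the rigid equality $\alpha_1=b_1$---a genuine endpoint feature absent in Theorem \ref{0t1}---I argue with Carleson-type test functions $f(u,\eta)=\chi_{D(\zeta,r)}(u)\,h(\eta)$ concentrated on the pseudo-ball $D(\zeta,r):=\{u\in\bn:|1-\langle u,\zeta\rangle|<r\}$ at a boundary point $\zeta\in\mathbb{S}_n$, and match the $r$-scalings of $\|f\|_{\lv}$ and $\|T_{\va,\vb,\vc} f\|_{\lt}$ as $r\to 0^+$; the endpoint $p_1=1$ tightens the inequality to the equality $\alpha_1=b_1$.

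For (iii) $\Rightarrow$ (i), the plan is to exploit the tensor factorization of $S_{\va,\vb,\vc}$: by Fubini it equals the composition, in either order, of two commuting operators acting respectively on the $(z,u)$- and the $(w,\eta)$-pair of variables. For the $(w,\eta)$-factor the exponents satisfy $1<p_2\le q_2$, so the weighted mixed-norm Schur test used in the proof of Theorem \ref{0t1} gives $L^{p_2}_{\alpha_2}\to L^{q_2}_{\beta_2}$ boundedness under the stated conditions on $\alpha_2,b_2,c_2$. For the $(z,u)$-factor the exponent is $p_1=1$, where Schur's test is not available; instead, Minkowski's integral inequality reduces $L^1_{\alpha_1}\to L^{q_1}_{\beta_1}$ boundedness to the uniform kernel estimate $\sup_{u\in\bn}(1-|u|^2)^{b_1-\alpha_1}\|(1-|\cdot|^2)^{a_1}|1-\langle\cdot,u\rangle|^{-c_1}\|_{L^{q_1}_{\beta_1}}<\infty$, which the Forelli-Rudin integral lemma verifies precisely under $\alpha_1=b_1$ and $c_1<a_1+\frac{n+1+\beta_1}{q_1}$. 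Splicing the two one-variable bounds via an iterated Minkowski-type argument for the mixed norms then produces the desired $\lv\to\lt$ estimate for $S_{\va,\vb,\vc}$.

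The main obstacle is Step 3: the classical Schur test breaks down at $p=1$, so the first variable has to be treated separately via Minkowski's inequality and then woven together with the Schur-type estimate in the second variable in a way that respects the mixed-norm structure. A secondary subtlety is the extraction of the rigid equality $\alpha_1=b_1$ in Step 2, which is invisible to the power-type test family that suffices in Theorem \ref{0t1} and therefore demands the Carleson-type concentrated tests described above.
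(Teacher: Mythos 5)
Your implication (i) $\Rightarrow$ (ii) coincides with the paper's Observation \ref{3o1}. Your sufficiency argument (iii) $\Rightarrow$ (i) takes a genuinely different route: the paper proves a new two-variable Schur test with an essential supremum in the $p_1=1$ slot (Proposition \ref{1p3}, applied through Lemma \ref{1l4} and Observation \ref{3o2}), whereas you factor $S_{\va,\vb,\vc}$ into the two one-variable operators and glue an $L^1_{\alpha_1}\to L^{q_1}_{\beta_1}$ Minkowski bound to an $L^{p_2}_{\alpha_2}\to L^{q_2}_{\beta_2}$ Schur bound. This splicing does work, but only because the inner exponent is $1$: the step you should make explicit is that $\int_{\bn}\bigl|\int_{\bn}K_2(w,\eta)f(u,\eta)\,d\upsilon_{\alpha_2}(\eta)\bigr|\,d\upsilon_{\alpha_1}(u)\le\int_{\bn}K_2(w,\eta)\,\|f(\cdot,\eta)\|_{L^1_{\alpha_1}}\,d\upsilon_{\alpha_2}(\eta)$, so the second-variable operator acts on the scalar function $\eta\mapsto\|f(\cdot,\eta)\|_{L^1_{\alpha_1}}$ and the first-variable bound is then applied slice-wise in $w$. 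Modulo writing this out, it is an acceptable and arguably more elementary alternative to Proposition \ref{1p3}.

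The necessity direction (ii) $\Rightarrow$ (iii) has genuine gaps. First, radial power-type tests $f(u,\eta)=(1-|u|^2)^{s_1}(1-|\eta|^2)^{s_2}$ cannot produce any constraint on $c_1$ or $c_2$ when fed to $T_{\va,\vb,\vc}$: by the mean value property (Lemma \ref{anlyticint}) the analytic kernel integrates such functions to constants, $\int_{\bn}(1-\langle z,u\rangle)^{-c_1}\,d\upsilon_{b_1+s_1}(u)=n\,\mathrm{B}(n,b_1+s_1+1)$, independent of $z$ and of $c_1$; the Forelli--Rudin asymptotics you invoke see only the kernel with absolute values, i.e.\ the operator $S$, which is not what hypothesis (ii) provides. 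This is exactly why the paper tests $T$ on the concentrated non-radial family $f_{\xi,\zeta}$ built from reproducing kernels (Lemma \ref{test2}), whose image under $T$ is computed exactly via Lemma \ref{reproduce} and genuinely exhibits the $c_i$-dependence. Second, the strict inequality $c_1<a_1+\frac{n+1+\beta_1}{q_1}$ is not obtained by ``avoiding logarithmic degeneration'': the family $f_{\xi,\zeta}$ only yields the non-strict inequality (at the critical exponent one gets a $\log$-versus-$\log$ comparison that is consistent), and the paper must run a separate contradiction argument with the adjoint acting on the auxiliary function $g_\xi$, split into cases according to whether $q_1=1$ or $q_2=1$ (Lemmas \ref{2l8} and \ref{2l11}); nothing in your proposal plays this role. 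Third, your Carleson-box tests for extracting the exact equality $\alpha_1=b_1$ are not computed and should not be expected to succeed: any such scaling comparison yields $\alpha_1\le b_1$ together with a condition coupling $c_1$ to $b_1-\alpha_1$, never a rigid equality. Be aware that the paper itself only proves $\alpha_1\le b_1$ (Lemma \ref{2l5}, via $T^*$ applied to $(1-|z|^2)^{N_1}(1-|w|^2)^{N_2}$, using $p_1'=\infty$) and then assumes $\alpha_1=b_1$ in Lemma \ref{2l11}; the configuration $\alpha_1<b_1$ with $c_1$ small (e.g.\ $c_1=0$) yields a bounded operator by your own Minkowski argument, so this particular step cannot be repaired as stated and is a defect of the characterization itself rather than merely of your test family.
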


\begin{theorem}\label{0t3}
Let $\vp:=(p_1,1)$ and $\vq:=(q_1,q_2)$
satisfy $1<p_1\le q_-<\fz$ with
$q_-:=\min\{q_1,q_2\}$.
Then the following statements are equivalent.
\begin{enumerate}
\item[{\rm(i)}]
The operator $S_{\negthinspace \va,\,\vb,\,\vc}$ is
bounded from $\lv$ to $\lt$\,.
\vspace{-0.2cm}
\item[{\rm(ii)}]
The operator $T_{\va,\,\vb,\,\vc}$ is bounded from $\lv$ to $\lt$\,.
\item[{\rm(iii)}]
The parameters satisfy that, for any $i\in\{1,2\}$,
\begin{align*}
     \left\{
     \begin{aligned}
     &\alpha_1+1<p_1(b_1+1),\ \ c_1\le n+1+a_1+b_1+\frac{n+1+\beta_1}{q_1}-
     \frac{n+1+\alpha_1}{p_1}, \\
     &-q_i a_i<\beta_i+1,\ \ \alpha_2=b_2,\ \ c_2<a_2+\frac{n+1+\beta_2}{q_2}.
     \end{aligned}
     \right.
     \end{align*}
\end{enumerate}
\end{theorem}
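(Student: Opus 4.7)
The plan is to follow the same three-step scheme used for Theorem~\ref{0t2}, with the roles of the $z$- and $w$-variables exchanged so that the $L^1$-endpoint now sits in the outer exponent $p_2=1$ rather than the inner one. I would close the equivalence by proving (i)$\Rightarrow$(ii)$\Rightarrow$(iii)$\Rightarrow$(i).

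The implication (i)$\Rightarrow$(ii) is immediate from the pointwise domination $|T_{\va,\vb,\vc}f|\le S_{\va,\vb,\vc}|f|$. For (iii)$\Rightarrow$(i), I would exploit the tensor-product structure of the kernel to factor $S_{\va,\vb,\vc}|f|(z,w)$ as the composition of the two single-parameter Forelli-Rudin operators $S^{(z)}_{a_1,b_1,c_1}$ and $S^{(w)}_{a_2,b_2,c_2}$ acting in $z$ and in $w$ respectively. First I take the $L^{q_1}_{\beta_1}$-norm in $z$ and apply the classical single-parameter $L^{p_1}_{\alpha_1}\to L^{q_1}_{\beta_1}$ bound for $S^{(z)}_{a_1,b_1,c_1}$, whose hypotheses are exactly the first line of (iii) combined with $p_1>1$. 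Then I take the $L^{q_2}_{\beta_2}$-norm in $w$ and swap it with the inner $L^{p_1}_{\alpha_1}$-norm in $u$ via Minkowski's integral inequality, which is legitimate since $p_1\le q_-\le q_2$. Finally I invoke the sharp $L^1_{\alpha_2}\to L^{q_2}_{\beta_2}$ bound for $S^{(w)}_{a_2,b_2,c_2}$; its three requirements, namely $-q_2 a_2<\beta_2+1$, $\alpha_2=b_2$, and $c_2<a_2+(n+1+\beta_2)/q_2$, are precisely the second line of (iii).

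For (ii)$\Rightarrow$(iii), I would test $T_{\va,\vb,\vc}$ on product functions $f(u,\eta)=f_1(u)f_2(\eta)$. Because the kernel is itself a product, $T_{\va,\vb,\vc}f(z,w)=T^{(z)}_{a_1,b_1,c_1}f_1(z)\cdot T^{(w)}_{a_2,b_2,c_2}f_2(w)$, and the mixed norms decouple completely:
\begin{align*}
\|T_{\va,\vb,\vc}f\|_{\lt}=\|T^{(z)}_{a_1,b_1,c_1}f_1\|_{L^{q_1}_{\beta_1}}\,\|T^{(w)}_{a_2,b_2,c_2}f_2\|_{L^{q_2}_{\beta_2}},\qquad \|f\|_{\lv}=\|f_1\|_{L^{p_1}_{\alpha_1}}\,\|f_2\|_{L^1_{\alpha_2}}.
\end{align*}
Hence boundedness of $T_{\va,\vb,\vc}$ forces the multiplicative estimate $\|T^{(z)}_{a_1,b_1,c_1}\|\cdot\|T^{(w)}_{a_2,b_2,c_2}\|\le\|T_{\va,\vb,\vc}\|$, so each single-parameter operator must be bounded. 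Invoking the classical necessary conditions for $T^{(z)}_{a_1,b_1,c_1}:L^{p_1}_{\alpha_1}\to L^{q_1}_{\beta_1}$ with $p_1>1$ (see \cite{z15,z23}) yields the first line of (iii), while the $L^1$-endpoint necessary conditions for $T^{(w)}_{a_2,b_2,c_2}:L^1_{\alpha_2}\to L^{q_2}_{\beta_2}$ yield the second line.

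The main technical obstacle is the $L^1$-endpoint analysis in the $w$-variable: the equality $\alpha_2=b_2$ is rigid because a strict inequality in either direction already destroys boundedness, and the strict inequality $c_2<a_2+(n+1+\beta_2)/q_2$ stems from a logarithmic divergence of the critical Forelli-Rudin integral in the $\eta$-variable. Making the tensorization and the Minkowski swap rigorous at this endpoint leans on the refined integral estimates of holomorphic kernels on $\bn$ and the mixed-norm adaptation of Schur's test developed earlier in the paper.
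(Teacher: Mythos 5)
Your implication (i)$\Rightarrow$(ii) matches the paper (Observation \ref{3o1}), and your sufficiency argument (iii)$\Rightarrow$(i) is a legitimate route that is genuinely different from the paper's: the paper proves a new mixed-norm Schur test (Proposition \ref{1p4}) and applies it directly via Lemma \ref{1l3}, whereas you factor the tensor-product kernel, invoke the known one-parameter weighted bounds $L^{p_1}_{\alpha_1}\to L^{q_1}_{\beta_1}$ and $L^{1}_{\alpha_2}\to L^{q_2}_{\beta_2}$, and glue them with Minkowski's inequality (valid since $1\le p_1\le q_2$). This is cleaner at the price of importing the one-parameter results; note only that after applying the $L^1_{\alpha_2}\to L^{q_2}_{\beta_2}$ bound you are left with $\bigl\|\,\|f(u,\cdot)\|_{L^1_{\alpha_2}}\bigr\|_{L^{p_1}_{\alpha_1}(du)}$, and one further application of Minkowski (using $p_1\ge 1$) is needed to reach $\|f\|_{\lv}$.

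The necessity step (ii)$\Rightarrow$(iii) contains a genuine error. Testing on products $f_1\otimes f_2$ does decouple the norms and does force each single-parameter operator to be bounded, but the $L^1$-endpoint characterization of $T^{(w)}_{a_2,b_2,c_2}\colon L^1_{\alpha_2}\to L^{q_2}_{\beta_2}$ is a disjunction, not the single case you quote: besides $\alpha_2=b_2$ with $c_2<a_2+\frac{n+1+\beta_2}{q_2}$, it also admits $\alpha_2<b_2$ with $c_2\le a_2+b_2-\alpha_2+\frac{n+1+\beta_2}{q_2}$. This is exactly what the paper's own Theorem \ref{0t4} records for $L^1$ factors, and what Lemma \ref{2l2} reflects in yielding only $\alpha_2\le b_2$; your claim that ``a strict inequality in either direction already destroys boundedness'' is false in the direction $\alpha_2<b_2$. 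Hence the product-function test gives strictly weaker necessary conditions than (iii), and your argument does not close the equivalence. The problem is not a repairable slip in your write-up: your own sufficiency mechanism applies verbatim when $\alpha_2<b_2$ and $c_2\le a_2+b_2-\alpha_2+\frac{n+1+\beta_2}{q_2}$ and produces bounded operators for which (iii) fails (for instance $n=1$, $\va=\vb=(0,0)$, $\vp=(2,1)$, $\vq=(2,2)$, $\alpha_1=\beta_1=\beta_2=0$, $c_1=2$, $\alpha_2=-1/2$, $c_2=3/2$). So to prove (ii)$\Rightarrow$(iii) you would need a genuinely two-parameter obstruction excluding $\alpha_2<b_2$ when $p_1>1$ --- none is apparent, and the tensor structure argues against one --- and you should be aware that the paper's own necessity chain (Lemma \ref{2l2} giving $\alpha_2\le b_2$, followed by Lemma \ref{2l8}, which is stated only under the hypothesis $\alpha_2=b_2$, with the case $\alpha_2<b_2$ governed instead by Lemma \ref{2l7}) does not supply one either. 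Either way, the case $\alpha_2<b_2$ must be confronted explicitly, parallel to the four alternatives of Theorem \ref{0t4}, before the equivalence can be asserted.
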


\begin{theorem}\label{0t4}
Let $\vp:=(1,1)$ and $\vq:=(q_1,q_2)\in[1,\fz)^2$.
Then the following statements are equivalent.
\begin{enumerate}
\item[{\rm(i)}]
The operator $S_{\negthinspace \va,\,\vb,\,\vc}$ is
bounded from $L^{\vp}_{\vec{\az}}$ to $\lt$\,.
\vspace{-0.2cm}
\item[{\rm(ii)}]
The operator $T_{\va,\,\vb,\,\vc}$ is bounded from $L^{\vp}_{\vec{\az}}$ to $\lt$\,.
\item[{\rm(iii)}]
The parameters satisfy that, for any $i\in\{1,2\}$,
\begin{align}\label{0e1}
     \left\{
     \begin{aligned}
     &-q_i a_i<\beta_i+1,\ \ \alpha_i<b_i,\\
     &c_i\le a_i+b_i-\alpha_i+\frac{n+1+\beta_i}{q_i},
     \end{aligned}
     \right.
\end{align}
or
\begin{align}\label{0e2}
     \left\{
     \begin{aligned}
     &-q_i a_i<\beta_i+1,\ \ \alpha_i=b_i,\\
     &c_i<a_i+\frac{n+1+\beta_i}{q_i},
     \end{aligned}
     \right.
\end{align}
or
\begin{align}\label{0e3}
     \left\{
     \begin{aligned}
     &-q_i a_i<\beta_i+1,\ \ \alpha_1=b_1,\ \ c_1< a_1+\frac{n+1+\beta_1}{q_1},\\
     &\alpha_2<b_2,\ \ c_2\le a_2+b_2-
     \alpha_2+\frac{n+1+\beta_2}{q_2},
     \end{aligned}
     \right.
\end{align}
or
\begin{align}\label{0e4}
     \left\{
     \begin{aligned}
     &-q_i a_i<\beta_i+1,\ \ \alpha_1<b_1,\ \ c_1\le a_1+b_1-
     \alpha_1+\frac{n+1+\beta_1}{q_1},\\
     &\alpha_2=b_2,\ \ c_2< a_2+\frac{n+1+\beta_2}{q_2}.
     \end{aligned}
     \right.
\end{align}
\end{enumerate}
\end{theorem}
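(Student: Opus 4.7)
The plan is to prove the three implications (i) $\Rightarrow$ (ii), (iii) $\Rightarrow$ (i), and (ii) $\Rightarrow$ (iii). The first is immediate from the pointwise bound $|T_{\va,\vb,\vc} f(z,w)| \le S_{\negthinspace \va,\vb,\vc}(|f|)(z,w)$, so the real work lies in the other two. Since $\vp=(1,1)$, the input-side mixed norm is simply an iterated $L^1$-integral, which makes Fubini together with Minkowski's integral inequality the natural tools for sufficiency; the Schur-type arguments used in the earlier Theorems \ref{0t1}--\ref{0t3} are not available at the endpoint $p=1$.

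For (iii) $\Rightarrow$ (i), the kernel factors as $K_1(z,u)\,K_2(w,\eta)$ with $K_i(z,u):=(1-|z|^2)^{a_i}/|1-\langle z,u\rangle|^{c_i}$, and the $\lt$-norm decouples accordingly. Applying Minkowski's integral inequality in the $\vq$-mixed norm and rewriting $d\upsilon$ as $(1-|\cdot|^2)^{b_i-\alpha_i}\,d\upsilon_{\alpha_i}$ up to constants reduces matters to showing that, uniformly in $u,\eta\in\bn$,
$$
(1-|u|^2)^{b_1-\alpha_1}\,\|K_1(\cdot,u)\|_{L^{q_1}_{\beta_1}(\bn)}\,\cdot\,(1-|\eta|^2)^{b_2-\alpha_2}\,\|K_2(\cdot,\eta)\|_{L^{q_2}_{\beta_2}(\bn)}\ls 1.
$$
Each one-variable factor is then handled by the standard Forelli-Rudin integral estimate: the requirement $-q_i a_i<\beta_i+1$ is forced for $L^{q_i}$-integrability on the $z$-side, while the dichotomy between $\alpha_i=b_i$ with strict bound $c_i<a_i+(n+1+\beta_i)/q_i$, and $\alpha_i<b_i$ with the non-strict bound $c_i\le a_i+b_i-\alpha_i+(n+1+\beta_i)/q_i$, reflects whether a boundary-vanishing weight is available to absorb a logarithmic or polynomial singularity. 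The four regimes \eqref{0e1}--\eqref{0e4} are precisely the $2\times 2$ combinations of these single-variable alternatives.

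For (ii) $\Rightarrow$ (iii), we test with product functions $f(u,\eta)=f_1(u)\,f_2(\eta)$, which decouples the argument into two one-variable conditions. To force $-q_i a_i<\beta_i+1$, we take any positive $f$ supported in a compact subset of $\bn\times\bn$ away from the boundary: the kernel is then bounded below, $T_{\va,\vb,\vc}f(z,w)\gs (1-|z|^2)^{a_1}(1-|w|^2)^{a_2}$, and finiteness of the $\lt$-norm forces each weight $(1-|z|^2)^{a_i q_i+\beta_i}$ to be $d\upsilon$-integrable. To pin down the sharp relation between $c_i$, $\alpha_i$, and $b_i$, we take a standard $p=1$ family of test functions along Carleson windows near boundary points, for example $f_i(u)=(1-|u|^2)^{s_i}\chi_{E_i(u_0,\delta)}(u)$ with $u_0\in\mathbb{S}_n$ and $\delta\to 0$. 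A direct computation using the asymptotics of $\int(1-|u|^2)^s|1-\langle z,u\rangle|^{-c}\,d\upsilon(u)$ as $z\to u_0$ then shows that, in each variable independently, only the two alternatives above are permissible, yielding precisely the four cases \eqref{0e1}--\eqref{0e4}.

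The main obstacle is the delicate handling of the equality cases $\alpha_i=b_i$, where the relevant one-variable integral is of logarithmic order. On the sufficiency side one must verify that such a logarithmic divergence is genuinely forbidden unless $c_i$ is \emph{strictly} smaller than $a_i+(n+1+\beta_i)/q_i$; on the necessity side, the test functions must be tuned with sufficient asymptotic resolution to detect this endpoint behaviour. A further subtlety is that, since the two parameter coordinates decouple, one has to confirm that no extra ``mixed'' condition links $(\alpha_1,b_1,c_1)$ with $(\alpha_2,b_2,c_2)$, so that the four disjoint possibilities in (iii) genuinely exhaust all admissible parameter tuples.
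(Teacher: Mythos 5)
Your implication (i)$\Rightarrow$(ii) and your sufficiency argument are essentially sound. For (iii)$\Rightarrow$(i), the Minkowski-based reduction to the uniform bound $\sup_{u,\eta}(1-|u|^2)^{b_1-\alpha_1}\|K_1(\cdot,u)\|_{L^{q_1}_{\beta_1}}\,(1-|\eta|^2)^{b_2-\alpha_2}\|K_2(\cdot,\eta)\|_{L^{q_2}_{\beta_2}}\lesssim 1$ is correct, each factor being controlled by Lemmas \ref{asymptotic} and \ref{asymptotic2} after the reduction of Observation \ref{3o2}. Note, however, that your premise that ``Schur-type arguments are not available at the endpoint $p=1$'' is inaccurate: Proposition \ref{1p2} is precisely a Schur test for $\vp=(1,1)$, and your argument is its degenerate case $h_1\equiv h_2\equiv 1$, $\gamma_i=0$, $\delta_i=1$, whereas the paper's Lemmas \ref{1l1} (with $p_+=1$), \ref{1l5}, \ref{1l6} and \ref{1l7} take nontrivial auxiliary weights $h_2(z,w)=(1-|z|^2)^{r_1}(1-|w|^2)^{r_2}$. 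Your version is a legitimate, somewhat more elementary route to the same estimates.

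The genuine gap is in (ii)$\Rightarrow$(iii). You assert that testing $T_{\va,\,\vb,\,\vc}$ on Carleson-window functions $f_i(u)=(1-|u|^2)^{s_i}\chi_{E_i(u_0,\delta)}(u)$ together with ``a direct computation'' yields exactly the two admissible alternatives per variable, but that computation is never performed, and it is exactly where the difficulty sits. First, the kernel of $T$ is $(1-\langle z,u\rangle)^{-c_i}$ \emph{without} absolute values, so a lower bound of the form $|Tf(z,w)|\gtrsim (1-|\zeta|^2)^{b_i-\alpha_i}|1-\langle z,\zeta\rangle|^{-c_i}$ valid for $z$ ranging over the whole region $|1-\langle z,\zeta\rangle|\ge C(1-|\zeta|^2)$ --- which is what you need in order to capture the logarithmic divergence at the endpoint $\alpha_i=b_i$, $c_iq_i=n+1+\beta_i+a_iq_i$ --- requires controlling the oscillation of $\arg(1-\langle z,u\rangle)$ as $u$ runs over the window; this is not automatic and is precisely why the paper routes the necessity through the adjoint $T_{0,\,\vb,\,\vc}^*$ (Lemma \ref{boundeddual}) and tests it on functions such as $g_{\zeta}(z,w)=(1-\langle \zeta,w\rangle)^{(n+1+\beta_2)/q_2}|1-\langle \zeta,w\rangle|^{-(n+1+\beta_2)}$, whose phase is engineered to cancel that of the kernel so that Lemmas \ref{anlyticint} and \ref{asymptotic} apply (Lemmas \ref{2l6}, \ref{2l8}, \ref{2l11}, \ref{2l12}). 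Second, even granting such a lower bound, you never verify the dichotomy you invoke: the non-strict bound $c_i\le a_i+b_i-\alpha_i+(n+1+\beta_i)/q_i$ when $\alpha_i<b_i$ versus the \emph{strict} bound when $\alpha_i=b_i$; excluding equality in the latter case needs a separate contradiction argument at $c_iq_i=n+1+\beta_i+a_iq_i$, comparing a growth of order $\log\frac{1}{1-|\zeta|^2}$ against an a priori bound of order $\bigl(\log\frac{1}{1-|\zeta|^2}\bigr)^{1/q_i'}$, which you flag as ``the main obstacle'' but do not resolve. As written, the necessity direction is a plausible programme rather than a proof.
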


The rest of this article is organized as follows.

In Section \ref{s2}, we present the proof of the necessary of boundedness for the operators $T_{\negthinspace 0,\,\vb,\,\vc}$. To achieve this, we first employ a crucial lemma (Lemma \ref{test2}) and then combine the boundedness of $T_{\negthinspace 0,\,\vb,\,\vc}$ with that of its adjoint operator to establish the necessity of Theorem \ref{0t1}. However, for the critical cases (Theorems \ref{0t2}, \ref{0t3}, and \ref{0t4}), this approach cannot be employed, as the function $f_{\xi,\zeta}$ in Lemma \ref{test2} does not belong to the space $\lv$ in these cases. To overcome this difficulty, we observe that although Lemma \ref{test2} is not available in these cases, the idea behind it is still effective. Specifically, we take full advantage of the boundedness of $T_{\negthinspace 0,\,\vb,\,\vc}$ and its adjoint operator and use the proof of Lemma \ref{test2} to obtain some necessary conditions on exponents in the considered critical cases, which are slightly larger than the desired ones. Moreover, by constructing an appropriate function $g_{\zeta}$ in Lemma \ref{2l8} and using a contradiction argument, we show that the necessary conditions on exponents are exactly the desired ones. Thus, by applying this approach, we finally prove the necessity of the boundedness of operators $T_{\negthinspace 0,\,\vb,\,\vc}$ in all the critical cases.

Section \ref{s1} focuses on establishing new Schur's tests in the setting of mixed-norm Lebesgue spaces and proving the sufficiency of boundedness for $S_{\negthinspace 0,\,\vb,\,\vc}$. Schur's test, which gives a sufficient condition for the boundedness of integral operators on $L^p$ spaces, is one of the most useful tools in establishing the boundedness of integral operators. Schur first introduced the criterion in \cite{s11} as a discrete form, and it has since been generalized by many authors. For instance, in 1965, Schur's test for the boundedness of one integral operator $T$ from $L^p$ to $L^q$ for $1<q\le p<\fz$ was obtained in \cite{g65}, and the work \cite{o70,z15} provided a generalization of Schur's test for the boundedness of $T$ from $L^p$ to $L^q$ for $1\le p\le q<\fz$. By constructing appropriate test functions and using these improved Schur's tests, we are able to establish the sufficiency of boundedness for $S_{\negthinspace 0,\,\vb,\,\vc}$.

In Section \ref{s3}, we conclude the proofs of our main results by combining the conclusions proved in Sections \ref{s2} and \ref{s1}, making two crucial observations, and putting the obtained pieces together. In the last section, we apply the main theorems to the weighted multiparameter Bergman projection and the weighted multiparameter Berezin transform. We obtain the necessary and sufficient conditions for the $L^{\vec p}-L^{\vec q}$ boundedness of the weighted multiparameter Berezin transform and the $L^{\vec p}-A^{\vec q}$ boundedness of the weighted multiparameter Bergman projection respectively, where $A^{\vec q}$ denotes the mixed-norm Bergman space.

Finally, we make some conventions on notation. We always denote by $C$
a \emph{positive constant} which is independent of the main parameters,
but it may vary from line to line. The notation $f\ls g$ means $f\le Cg$
and, if $f\ls g\ls f$, then we write $f\sim g$. If $f\le Cg$ and $g=h$
or $g\le h$, we then write $f\ls g\sim h$ or $f\ls g\ls h$, rather than
$f\ls g=h$ or $f\ls g\le h$. For any $q\in[1,\infty]$, we denote by $q'$ its
\emph{H\"older conjugate exponent}, namely, $1/q+1/q'=1$. 
Similarly, for any $\vq:=(q_1,q_2)\in[1,\infty]^2$, we denote by $\vq':=(q_1',q_2')$ its
\emph{H\"older conjugate exponent}, namely, $1/q_1+1/q_1'=1$ and $1/q_2+1/q_2'=1$.

\section{Proof of necessary for boundedness of $T_{\negthinspace 0,\,\vb,\,\vc}$
	\label{s2}}

In this section, we show the necessary of the main theorems.
For this purpose, we first show two useful lemmas as follows.

\begin{lemma}\label{anlyticint}
	Let $t>-1,\, c\in \mathbb{R}$, and $z \in \mathbb{B}_n$. For any $w \in \mathbb{B}_n$, define
	$f_z(w)=\frac{1}{(1-\langle z,w\rangle)^c}.$
	Then
	$\int_{\mathbb{B}_n}f_z(w)dv_t(w)=n\, \mathrm{B}(n,t+1),$
	where, for any $P,Q>0$, $\mathrm{B}(P,Q):=\int_0^1x^{P-1}(1-x)^{Q-1}\,dx$ denotes the Beta function.
\end{lemma}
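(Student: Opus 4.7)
The plan is to combine the anti-holomorphicity of $f_z$ in $w$ with the rotational invariance of the weight $(1-|w|^2)^t$. First I would observe that, for $z,w\in\bn$, one has $\mathrm{Re}(1-\la z,w\ra)\ge 1-|z||w|>0$, so $1-\la z,w\ra$ lies in the open right half plane and $f_z(w)$ is unambiguously defined via the principal branch of the power function; in particular, $f_z$ is an anti-holomorphic function of $w$ on $\bn$.

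Next, I would expand $f_z$ as the binomial series
\begin{align*}
f_z(w)=\sum_{k=0}^\infty \binom{-c}{k}(-\la z,w\ra)^k,
\end{align*}
which, for fixed $z\in\bn$, converges uniformly in $w\in\overline{\bn}$ because $|\la z,w\ra|\le |z|<1$. The multinomial expansion of $\la z,w\ra^k=\big(\sum_j z_j\overline{w}_j\big)^k$ produces monomials of the form $z^{\alpha}\overline{w}^{\alpha}$ with $|\alpha|=k$. Using the invariance of $(1-|w|^2)^t\,dv(w)$ under $w\mapsto e^{i\theta}w$, each such monomial integrates to $0$ whenever $|\alpha|\ge 1$, since the integral equals $e^{-i|\alpha|\theta}$ times itself. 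Interchanging sum and integral (justified by the uniform convergence just noted) shows that only the $k=0$ term survives, so
\begin{align*}
\int_{\bn} f_z(w)\,dv_t(w)=\int_{\bn}(1-|w|^2)^t\,dv(w).
\end{align*}

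I would then evaluate this remaining radial integral in polar coordinates $w=r\zeta$ with $r\in[0,1)$ and $\zeta\in\mathbb{S}_n$, obtaining
\begin{align*}
\int_{\bn}(1-|w|^2)^t\,dv(w)=2n\int_0^1 r^{2n-1}(1-r^2)^t\,dr,
\end{align*}
and the substitution $u=r^2$ converts the right-hand side into $n\int_0^1 u^{n-1}(1-u)^t\,du=n\,\mathrm{B}(n,t+1)$, which is the claimed identity.

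The main obstacle is essentially bookkeeping: one must verify the uniform convergence of the binomial series on $\overline{\bn}$ to justify termwise integration (routine once $|z|<1$ is fixed) and correctly identify the normalization so that $dv_t$ in the statement denotes $(1-|w|^2)^t\,dv(w)$ with $dv$ the normalized volume measure. The decisive step is the rotational cancellation, which is a one-line unitary-invariance argument; everything else is a classical Beta-function computation.
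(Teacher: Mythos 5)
Your proof is correct, and it takes a route that differs in mechanism from the paper's. The paper observes that $f_z$ is anti-holomorphic, hence harmonic, on a neighborhood of $\overline{\mathbb{B}_n}$, and invokes the mean value property to get $\int_{\mathbb{S}_n}f_z(r\xi)\,d\sigma(\xi)=f_z(0)=1$ for each radius $r$, after which the radial integration in polar coordinates produces the same Beta integral you obtain. You instead expand $(1-\langle z,w\rangle)^{-c}$ as a binomial/multinomial series and kill every monomial $\overline{w}^{\alpha}$ with $|\alpha|\ge 1$ by the invariance of $(1-|w|^2)^t\,dv(w)$ under $w\mapsto e^{i\theta}w$; this is an explicit orthogonality computation that replaces the mean value property as a black box. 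The two cancellation mechanisms are of course the same underlying fact viewed from different angles, and both reduce to $2n\int_0^1 r^{2n-1}(1-r^2)^t\,dr=n\,\mathrm{B}(n,t+1)$. What the paper's argument buys is brevity and a cleaner conceptual statement (and it extends verbatim to any anti-holomorphic integrand bounded on $\overline{\mathbb{B}_n}$, which is how the companion Lemma \ref{reproduce} is used); what yours buys is self-containedness, at the modest cost of justifying the termwise integration, which your uniform convergence remark and the finiteness of $v_t(\mathbb{B}_n)$ for $t>-1$ do handle.
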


\begin{proof}
	If $c=0$ or $z=0$, this lemma obviously holds true. Thus, we suppose $c\neq 0$ and $z\neq 0$. 
	Notice that, for fixed $z \in \mathbb{B}_n$, $1-\langle z,w \rangle$ is anti-analytic respect to $w$. Then $f_z$ is
	anti-analytic as well as harmonic on the closure of $\mathbb{B}_n$ if we extend its domain of definition. 
	Let $w=r\xi$ where $\xi \in \mathbb{S}_n$. By the mean value property, we have
	$1=f_z(0)=\int_{\mathbb{S}_n}f_z(r\xi)\,d\sigma(\xi)$.
	This further implies that
	$$\int_{\mathbb{B}_n}f_z(w)\,dv_t(w)=2n\int_{0}^{1}r^{2n-1}(1-r^2)^t\int_{\mathbb{S}_n}f_z(r\xi)
	\,d\sigma(\xi)\,dr=n\, \mathrm{B}(n,t+1),$$
	which completes the proof.
\end{proof}

\begin{lemma}\label{reproduce}
	Let $t>-1,\, c\in \mathbb{R}$, and $w \in \mathbb{B}_n$. For any $z \in \mathbb{B}_n$, let
	$g_w(z)=\frac{1}{(1-\langle z,w\rangle)^c}.$
	Then, for any $\xi\in \mathbb{B}_n$,
	$$g_w(\xi)=\int_{\mathbb{B}_n}\frac{g_w(u)\,dv_{t}(u)}{(1-\langle \xi,u \rangle)^{n+1+t}}.$$
\end{lemma}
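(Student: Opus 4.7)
The plan is to imitate the classical derivation of the weighted Bergman reproducing formula by expanding each factor of the integrand as a series in $u$ and $\bar u$ and exploiting the orthogonality of the monomials $\{u^\alpha \bar u^\beta\}$ in $L^2(\mathbb{B}_n,\,dv_t)$. Since $|w|<1$, the function $g_w$ extends holomorphically in $u$ to an open neighborhood of $\overline{\mathbb{B}_n}$, so the binomial series combined with the multinomial theorem gives
$$g_w(u) = \sum_{k=0}^{\infty}\frac{(c)_k}{k!}\langle u,w\rangle^{k} = \sum_{\alpha}\frac{(c)_{|\alpha|}}{\alpha!}\bar w^{\alpha}u^{\alpha},$$
absolutely and uniformly on $\overline{\mathbb{B}_n}$, where $(c)_k$ is the Pochhammer symbol. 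For fixed $\xi\in\mathbb{B}_n$, an analogous expansion in $\bar u$ yields
$$\frac{1}{(1-\langle \xi,u\rangle)^{n+1+t}} = \sum_{\beta}\frac{(n+1+t)_{|\beta|}}{\beta!}\xi^{\beta}\bar u^{\beta},$$
uniformly for $u\in\overline{\mathbb{B}_n}$.

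Substituting these two expansions into the integral and interchanging sum and integral (justified by the uniform bounds above and the finiteness of $dv_t$) reduces the problem to evaluating $\int_{\mathbb{B}_n}u^\alpha \bar u^\beta\,dv_t(u)$. Passing to polar coordinates $u = r\eta$ with $\eta\in\mathbb{S}_n$, this integral vanishes unless $\alpha = \beta$ by orthogonality of distinct monomials on the sphere; when $\alpha = \beta$, combining the standard identity $\int_{\mathbb{S}_n}|\eta^{\alpha}|^2\,d\sigma(\eta) = (n-1)!\,\alpha!/(n-1+|\alpha|)!$ with the radial Beta integral (via $s = r^2$) used in the proof of Lemma \ref{anlyticint}, together with the explicit value of $c_t$, should produce
$$\int_{\mathbb{B}_n}|u^{\alpha}|^{2}\,dv_t(u) = \frac{\alpha!\,\Gamma(n+1+t)}{\Gamma(n+1+t+|\alpha|)} = \frac{\alpha!}{(n+1+t)_{|\alpha|}}.$$
Plugging this back makes the factor $(n+1+t)_{|\alpha|}$ from the kernel expansion cancel exactly, leaving
$$\int_{\mathbb{B}_n}\frac{g_w(u)\,dv_{t}(u)}{(1-\langle \xi,u\rangle)^{n+1+t}} = \sum_{\alpha}\frac{(c)_{|\alpha|}}{\alpha!}\xi^{\alpha}\bar w^{\alpha} = \sum_{k=0}^{\infty}\frac{(c)_{k}}{k!}\langle \xi,w\rangle^{k} = (1-\langle \xi,w\rangle)^{-c} = g_w(\xi),$$
where the second equality uses $\langle \xi,w\rangle = \sum_j \xi_j\bar w_j$ and the multinomial theorem once more.

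The main technical obstacle I anticipate is the bookkeeping in the normalization constant $\int_{\mathbb{B}_n}|u^\alpha|^2\,dv_t(u)$: three independent ingredients, namely the weight normalizer $c_t = \Gamma(n+t+1)/(n!\Gamma(t+1))$, the radial Beta integral, and the sphere integral of $|\eta^{\alpha}|^2$, must combine so that all gamma factors collapse into $\alpha!/(n+1+t)_{|\alpha|}$, and it is precisely this cancellation that allows the two Pochhammer factors in the series to drop out. Justifying the term-by-term integration itself is routine once one exploits $|\xi|<1$ to bound the tails of the kernel series uniformly on $\overline{\mathbb{B}_n}$.
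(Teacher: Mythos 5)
Your argument is correct, but it takes a genuinely different route from the paper. The paper's proof is a two-step reduction: since $|1-\langle z,w\rangle|\ge 1-|w|>0$ for fixed $w\in\mathbb{B}_n$, the function $g_w$ is holomorphic and bounded on $\mathbb{B}_n$, hence lies in $L^\infty(\mathbb{B}_n,dv)\subset L^1(\mathbb{B}_n,dv_t)$, and the conclusion is then an immediate application of the known weighted reproducing formula \cite[Theorem 2.2]{zbook05} for functions in $A^1_t$. You instead re-derive that reproducing identity from scratch for this particular $g_w$: expanding $g_w(u)$ and the kernel $(1-\langle\xi,u\rangle)^{-(n+1+t)}$ as (uniformly convergent, since $|w|<1$ and $|\xi|<1$) power series in $u$ and $\bar u$, using orthogonality of distinct monomials on $\mathbb{S}_n$, and checking that $\int_{\mathbb{B}_n}|u^\alpha|^2\,dv_t(u)=\alpha!/(n+1+t)_{|\alpha|}$ so that the Pochhammer factor from the kernel cancels exactly. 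Your normalization computation is right (the factors $c_t$, the Beta integral, and $\int_{\mathbb{S}_n}|\eta^\alpha|^2\,d\sigma$ do collapse as claimed), and the interchange of summation and integration is legitimately justified by the uniform bounds. What your approach buys is a self-contained proof that makes visible why the exponent $n+1+t$ is the unique one for which the cancellation occurs; what it costs is length, and it essentially reproves \cite[Theorem 2.2]{zbook05} in the special case at hand rather than exploiting the (easy) observation that $g_w\in A^1_t$, which is all the cited theorem requires.
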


\begin{proof}
	If $c=0$ or $w=0$, the result is obviously true by \cite[Theorem 2.2]{zbook05}.
	Therefore, we next suppose $c\neq 0$ and $w\neq 0$. By an argument similar
	to Lemma \ref{anlyticint}, we know that, for fixed $w\in \bn$, function $g_w(z)$ is analytic respect to $z$.
	Moreover, from the fact that $2\geq |1-\langle w,z \rangle|\geq 1-|\langle w,z \rangle|\geq 1-|w|,$
	we infer that, for fixed $w\in \bn$,
	$g_w\in L^\infty(\mathbb{B}_n,dv)\subset L^1(\mathbb{B}_n,dv_{t}).$
	Thus, by using \cite[Theorem 2.2]{zbook05}, we conclude that, for any $\xi\in \mathbb{B}_n$,
	$$g_w(\xi)=\int_{\mathbb{B}_n}\frac{g_w(u)dv_{t}(u)}{(1-\langle \xi,u \rangle)^{n+1+t}},$$
	which completes the proof.
\end{proof}

The following lemma is just \cite[Theorem 1.12]{zbook05} (see also \cite[Proposition 1.4.10]{rbook80}), 
which is needed when proving both the necessary and the sufficiency of main results.

\begin{lemma}\label{asymptotic}
	Suppose $z\in \mathbb{B}_n$, $t>-1$, and $c\in \rr$. The integral
	$$I_{c,t}(z)=\int_{\mathbb{B}_n}\frac{(1-|w|^2)^t}{|1-\langle z,w \rangle|^c}dv(w)$$
	has the following asymptotic behavior as $|z| \to 1^-$.
	\begin{enumerate}
		\item[{\rm(i)}] If $n+1+t-c>0$, then $I_{c,t}(z)\sim 1$.
		\item[{\rm(ii)}]  If $n+1+t-c=0$, then $I_{c,t}(z)\sim \log\frac{1}{1-|z|^2}$.
		\item[{\rm(iii)}]  If $n+1+t-c<0$, then $I_{c,t}(z)\sim (1-|z|^2)^{n+1+t-c}$.
	\end{enumerate}
\end{lemma}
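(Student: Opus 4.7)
The plan is to reduce the $n$-dimensional integral $I_{c,t}(z)$ to a one-dimensional radial integral via polar coordinates, and then analyze it with the aid of a standard sphere-integral asymptotic. Writing $w=r\zeta$ with $r\in[0,1)$ and $\zeta\in \mathbb{S}_n$, we obtain
$$I_{c,t}(z)=2n\int_0^1 r^{2n-1}(1-r^2)^t J_c(rz)\,dr,\qquad J_c(u):=\int_{\mathbb{S}_n}\frac{d\sigma(\zeta)}{|1-\langle u,\zeta\rangle|^c},$$
where $J_c(u)$ depends only on $|u|$ by rotation invariance of $\sigma$ on $\mathbb{S}_n$.

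For the sphere integral $J_c$, I would expand the kernel $(1-\langle u,\zeta\rangle)^{-c/2}(1-\langle \zeta,u\rangle)^{-c/2}$ as a double power series in $\langle u,\zeta\rangle$ and $\langle \zeta,u\rangle$, and use the orthogonality of holomorphic monomials on $\mathbb{S}_n$ to rewrite $J_c(u)$ as an explicit hypergeometric series in $|u|^2$. Stirling-type estimates on the resulting Gamma-function coefficients yield the classical three-case asymptotic for $J_c(u)$ as $|u|\to 1^-$: bounded if $c<n$, comparable to $\log\frac{1}{1-|u|^2}$ if $c=n$, and comparable to $(1-|u|^2)^{n-c}$ if $c>n$.

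Plugging this back into the radial integral and changing variables $s=r^2$, the problem reduces to estimating one-variable Beta-function integrals of the form $\int_0^1(1-s)^t(1-s|z|^2)^{-\gamma}\,ds$, whose asymptotic as $|z|\to 1^-$ is classical: bounded, $\log\frac{1}{1-|z|^2}$, or $(1-|z|^2)^{t+1-\gamma}$, depending on whether $t+1-\gamma$ is positive, zero, or negative. Taking $\gamma=c-n$ in the regime $c>n$ (and absorbing a possible logarithmic factor coming from $J_c$ when $c=n$) translates exactly into the trichotomy in the exponent $n+1+t-c$ stated in the lemma.

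The main subtlety is aligning the two transition points: $J_c$ transitions at $c=n$, while $I_{c,t}$ transitions at $c=n+1+t$. One therefore has to verify separately that, in the regime $c\le n$ (where $J_c$ is merely bounded), the integrable radial weight $(1-r^2)^t$ still produces $I_{c,t}(z)\sim 1$, and that in the regime $c>n$ the potentially divergent factor $(1-r^2)^{n-c}$ is tamed by the Beta-integral asymptotic in a case-sensitive way. Since this result is classical, appearing as \cite[Theorem 1.12]{zbook05} and \cite[Proposition 1.4.10]{rbook80}, I would essentially follow those expositions.
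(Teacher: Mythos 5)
Your proposal is correct and coincides with what the paper does: the paper offers no independent proof of this lemma, stating that it ``is just \cite[Theorem 1.12]{zbook05} (see also \cite[Proposition 1.4.10]{rbook80})'', and your sketch (polar coordinates, power-series expansion of the kernel with orthogonality of monomials on $\mathbb{S}_n$, Stirling estimates, and the one-dimensional Beta-type integral asymptotics) is precisely the classical argument from those references. No discrepancy to report.
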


As a consequence of Lemma \ref{asymptotic}, we have the following lemma which is useful 
both in the proofs of sufficiency and necessary.

\begin{lemma}\label{asymptotic2}
	Suppose $z\in \mathbb{B}_n$, $t>-1$, and $c\in \rr$. The integral
	$$I_{c,t}(z)=\int_{\mathbb{B}_n}\frac{(1-|w|^2)^t}{|1-\langle z,w \rangle|^c}dv(w)$$
	satisfies the following properties
	\begin{enumerate}
		\item[{\rm(i)}] If $n+1+t-c>0$, then $I_{c,t}(z)\lesssim 1$ for any $z\in \mathbb{B}_n$.
		\item[{\rm(ii)}]  If $n+1+t-c<0$, then $I_{c,t}(z)\lesssim (1-|z|^2)^{n+1+t-c}$ for any $z\in \mathbb{B}_n$.
	\end{enumerate}
\end{lemma}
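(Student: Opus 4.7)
The plan is to derive Lemma \ref{asymptotic2} directly from Lemma \ref{asymptotic} via a straightforward compactness argument. The gap between the two statements is purely between asymptotic behavior as $|z|\to 1^-$ and a global bound valid for every $z\in\mathbb{B}_n$. To bridge it, I would show that $I_{c,t}$ is bounded on any compact subset of $\mathbb{B}_n$, and then patch this with the boundary asymptotics supplied by Lemma \ref{asymptotic}.

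First I would verify the compact-set bound: fix any $r\in(0,1)$ and restrict to $\{z\in\mathbb{B}_n:|z|\le r\}$. For such $z$ and any $w\in\mathbb{B}_n$, the Cauchy--Schwarz bound $|\langle z,w\rangle|\le |z|\le r$ yields $1-r\le |1-\langle z,w\rangle|\le 2$, so $|1-\langle z,w\rangle|^{-c}$ is uniformly bounded by a constant $M(r,c)$ (distinguishing the sign of $c$). Since $t>-1$, the factor $(1-|w|^2)^t$ is integrable on $\mathbb{B}_n$, so $\sup_{|z|\le r}I_{c,t}(z)\le M(r,c)\int_{\mathbb{B}_n}(1-|w|^2)^t\,dv(w)<\infty$.

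Next I would combine this with Lemma \ref{asymptotic}. For (i), part (i) of Lemma \ref{asymptotic} provides constants $C_1>0$ and $r_0\in(0,1)$ such that $I_{c,t}(z)\le C_1$ whenever $r_0\le|z|<1$; combining with the compact-set bound on $\{|z|\le r_0\}$ gives $I_{c,t}(z)\lesssim 1$ on all of $\mathbb{B}_n$. For (ii), part (iii) of Lemma \ref{asymptotic} gives $C_2>0$ and $r_0\in(0,1)$ with $I_{c,t}(z)\le C_2(1-|z|^2)^{n+1+t-c}$ for $r_0\le|z|<1$. On the remaining set $\{|z|\le r_0\}$, the compact-set bound gives $I_{c,t}(z)\le C_3$, and since $n+1+t-c<0$ together with $1-|z|^2\le 1$ forces $(1-|z|^2)^{n+1+t-c}\ge 1$, we can write $C_3\le C_3(1-|z|^2)^{n+1+t-c}$, finishing the second case.

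The argument is essentially routine and I do not expect a real obstacle; the only point worth noting is the sign observation used in (ii), namely that the negativity of $n+1+t-c$ makes the weight $(1-|z|^2)^{n+1+t-c}$ at least $1$ on the interior compact region, which is exactly what lets the constant bound there be absorbed into the decay bound. No tracking of the implicit constants in Lemma \ref{asymptotic} is needed, and the omitted logarithmic case ($n+1+t-c=0$) is consistent with Lemma \ref{asymptotic2} not asserting anything in that regime.
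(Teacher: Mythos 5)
Your proof is correct and follows essentially the same route as the paper: a uniform bound on the compact region $\{|z|\le R\}$ coming from $1-R\le|1-\langle z,w\rangle|\le 2$ and the integrability of $(1-|w|^2)^t$, patched with the boundary asymptotics of Lemma \ref{asymptotic}, using in case (ii) that $(1-|z|^2)^{n+1+t-c}\ge 1$ on the interior region because the exponent is negative. No substantive differences from the paper's argument.
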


\begin{proof}
	Let $n+1+t-c>0$. From Lemma \ref{asymptotic}(i), we deduce that there is a positive constant $C$
	and a constant $R\in(0,1)$ such that $I_{c,t}(z)\le C$ for any $R< |z|<1.$ If $|z|\leq R$, note that
	$1-R\le 1-|z||w|\le |1-\langle z,w \rangle|\leq 1+|z||w|\leq 2.$
	Therefore, for any $|z|\le R$, we have
	\begin{align}\label{1e0}
		I_{c,t}(z)\sim \int_{\mathbb{B}_n}(1-|w|^2)^tdv(w) \sim 1.
	\end{align}
	Thus, {\rm (i)} holds true.
	
	In addition, when $n+1+t-c<0$, by Lemma \ref{asymptotic}(iii), we find that there is a constant $R\in(0,1)$ such that $I_{c,t}(z)\ls (1-|z|^2)^{n+1+t-c}$ for any $R< |z|<1.$ On the other hand, for any $|z|\le R$,
	via \eqref{1e0} and the fact that $n+1+t-c<0$, we further conclude that $I_{c,t}(z)\sim 1\ls (1-|z|^2)^{n+1+t-c}$.
	Therefore, {\rm (ii)} is proved.
\end{proof}

To show the necessary for the boundedness of $T_{\negthinspace 0,\,\vb,\,\vc}$, we also need
the following result.

\begin{lemma}\label{boundeddual}
	Let $\vp:=(p_1,p_2)\in[1,\fz]^2$ and $\vq:=(q_1,q_2)\in[1,\fz]^2$. 
	If the integral operator $T_{0,\,\vb,\,\vc}$ is bounded from $\lv$ to $\lt$, 
	then its adjoint operator $T_{0,\,\vb,\,\vc}^*$ defined by setting
	\begin{align*}
		T_{0,\,\vb,\,\vc}^*g(z,w):=(1-|z|^2)^{b_1-\alpha_1}(1-|w|^2)^{b_2-\alpha_2}
		\int_{\mathbb{B}_n}\int_{\mathbb{B}_n}\frac{(1-|u|^2)^{\beta_1}(1-|\eta|^2)^{\beta_2}}{(1-\langle z,u \rangle)^{c_1}(1-\langle w,\eta \rangle)^{c_2}}g(u,\eta)\,dv(u)\,dv(\eta)
	\end{align*}
	is bounded from $L_{\vt}^{\vq'}$ to $L^{\vp'}_{\vf}$.
\end{lemma}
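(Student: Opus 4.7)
The plan is to prove this lemma via the standard duality/adjoint computation for integral operators, exploiting the duality between mixed-norm Lebesgue spaces. For $\vp\in[1,\fz]^2$ and $\vf\in(-1,\fz)^2$, the continuous dual of $\lv$ is realized in the sense of Benedek--Panzone \cite{bp61} through the bilinear pairing
$$\langle h,\phi\rangle_{\vf}:=\int_{\bn}\int_{\bn} h(u,\eta)\,\phi(u,\eta)\,d\upsilon_{\alpha_1}(u)\,d\upsilon_{\alpha_2}(\eta),$$
and the analogous pairing $\langle\cdot,\cdot\rangle_{\vt}$ pairs $\lt$ with $L^{\vq'}_{\vt}$. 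In particular, for every measurable $\phi$,
$$\|\phi\|_{L^{\vp'}_{\vf}} = \sup\Bigl\{|\langle h,\phi\rangle_{\vf}|:\, h\in\lv,\ \|h\|_{\lv}\le 1\Bigr\},$$
so it suffices to bound this supremum by $\|g\|_{L^{\vq'}_{\vt}}$ when $\phi=T^*_{0,\vb,\vc}g$.

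The central step is a Fubini computation yielding an adjoint identity. For $f$ and $g$ bounded and compactly supported inside $\bn\times\bn$, the kernel of $T_{0,\vb,\vc}$ is bounded on the support, so Fubini applies and gives
$$\langle T_{0,\vb,\vc}f,g\rangle_{\vt} = \int_{\bn}\int_{\bn} f(u,\eta)\,J_g(u,\eta)\,dv(u)\,dv(\eta),$$
where $J_g(u,\eta)$ is the inner integral obtained by moving $g(z,w)\,d\upsilon_{\beta_1}(z)\,d\upsilon_{\beta_2}(w)$ to the innermost position. Rewriting $dv(u)=c_{\alpha_1}^{-1}(1-|u|^2)^{-\alpha_1}\,d\upsilon_{\alpha_1}(u)$ (and similarly for $\eta$) transfers the outer integration into the pairing $\langle\cdot,\cdot\rangle_{\vf}$ and absorbs the kernel's factors $(1-|u|^2)^{b_1}(1-|\eta|^2)^{b_2}$ into the prefactor $(1-|u|^2)^{b_1-\alpha_1}(1-|\eta|^2)^{b_2-\alpha_2}$ appearing in the stated definition of $T^*_{0,\vb,\vc}$. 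The result is
$$\langle T_{0,\vb,\vc}f,g\rangle_{\vt} = C_0\,\langle f,\,T^*_{0,\vb,\vc}g\rangle_{\vf}$$
for an explicit positive constant $C_0$ depending only on the normalizing constants $c_{\alpha_i},c_{\beta_i}$. Combining this identity with H\"older's inequality and the hypothesized bound $\|T_{0,\vb,\vc}f\|_{\lt}\lesssim\|f\|_{\lv}$ gives $|\langle f,T^*_{0,\vb,\vc}g\rangle_{\vf}|\lesssim \|f\|_{\lv}\|g\|_{L^{\vq'}_{\vt}}$, and taking the supremum over $f$ in the unit ball of $\lv$ (using the dense subclass of bounded, compactly supported functions, together with the general sup-realization of the $L^{\vp'}_{\vf}$-norm for components where $p_i=\fz$) yields $\|T^*_{0,\vb,\vc}g\|_{L^{\vp'}_{\vf}}\lesssim \|g\|_{L^{\vq'}_{\vt}}$ for $g$ in a dense subclass of $L^{\vq'}_{\vt}$. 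A standard monotone-convergence argument then extends the bound to all $g\in L^{\vq'}_{\vt}$.

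The main technical obstacle is purely bookkeeping of weights: one must carefully track how the unweighted measure $dv$ and the weighted measures $d\upsilon_{\alpha_i},\,d\upsilon_{\beta_i}$ interact so that the prefactor $(1-|z|^2)^{b_1-\alpha_1}(1-|w|^2)^{b_2-\alpha_2}$ emerges with precisely the correct exponents to match the stated definition of $T^*_{0,\vb,\vc}$. One minor additional point is that the Fubini step naturally produces the kernel with $(1-\langle u,z\rangle)^{c_i}$ rather than the $(1-\langle z,u\rangle)^{c_i}$ written in the lemma; since these are complex conjugates of equal modulus, the distinction is immaterial for any $L^p$-type operator bound invoked in subsequent sections, so it may be safely suppressed.
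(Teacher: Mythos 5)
Your proposal is correct and follows essentially the same route as the paper: identify the $L^{\vp'}_{\vf}$-norm of $T_{0,\,\vb,\,\vc}^*g$ via the Benedek--Panzone duality as a supremum over the unit ball of $\lv$, swap the integrals by Fubini to convert the pairing into $\langle g,\overline{T_{0,\,\vb,\,\vc}f}\rangle_{\vt}$, and conclude with the mixed-norm H\"older inequality and the assumed boundedness of $T_{0,\,\vb,\,\vc}$. Your additional care with dense subclasses and the conjugate-kernel bookkeeping is a harmless refinement of what the paper does more tersely.
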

\begin{proof}
	Let $g \in L_{\vt}^{\vq'}$. From  \cite[p.\,304, Theorem 2]{bp61}, we infer that
	\begin{align*}
		||T_{0,\,\vb,\,\vc}^*g||_{L^{\vp'}_{\vf}}
		&=\sup\limits_{||f||_{L^{\vp}_{\vf}}=1}\left|\int_{\mathbb{B}_n}\int_{\mathbb{B}_n}\overline{f(z,w)}T_{0,\,\vb,\,\vc}^*g(z,w)\,dv_{\alpha_1}(z)\,dv_{\alpha_2}(w)\right|\\
		&=\sup\limits_{||f||_{L^{\vp}_{\vf}}=1}\left|\int_{\mathbb{B}_n}\int_{\mathbb{B}_n}\overline{f(z,w)}(1-|z|^2)^{b_1-\alpha_1}(1-|w|^2)^{b_2-\alpha_2}\right.\\
		&\hs\hs\left.\times\int_{\mathbb{B}_n}\int_{\mathbb{B}_n}\frac{(1-|u|^2)^{\beta_1}(1-|\eta|^2)^{\beta_2}}{(1-\langle z,u \rangle)^{c_1}(1-\langle w,\eta \rangle)^{c_2}}g(u,\eta)\,dv(u)\,dv(\eta)\,dv_{\alpha_1}(z)\,dv_{\alpha_2}(w)\right|.
	\end{align*}
	By the Fubini theorem, the H\"{o}lder inequality for mixed norms (see, for instance, \cite[Remark 2.8(iv)]{hlyy}), and the boundedness of $T_{0,\,\vb,\,\vc}$ from $\lv$ to $\lt$, we conclude that
	\begin{align*}
		||T_{0,\,\vb,\,\vc}^*g||_{L^{\vp'}_{\vf}}
		&=\sup\limits_{||f||_{L^{\vp}_{\vf}}=1}\left|\int_{\mathbb{B}_n}\int_{\mathbb{B}_n}g(u,\eta)\overline{T_{0,\,\vb,\,\vc}f(u,\eta)}\,dv_{\beta_1}(u)\,dv_{\beta_2}(\eta)\right|\\
		&\leq \sup\limits_{||f||_{L^{\vp}_{\vf}}=1}\lf\|T_{0,\,\vb,\,\vc}f\r\|_{\lt}|
		|g||_{L^{\vq'}_{\vt}}\ls ||g||_{L^{\vq'}_{\vt}}.
	\end{align*}
	This finishes the proof.
\end{proof}

The following two lemmas are important and used frequently in the proof of necessary.

\begin{lemma}\label{test1}
	Let $\vq:=(q_1,q_2)\in[1,\fz)^2$, $N_1$ and $N_2$ be two positive numbers such that $N_1+\beta_1>-1$ and $N_2+\beta_2>-1$. For any $z\in\bn$ and $w\in\bn$, define
	$f_{N_1,N_2}(z,w):=(1-|z|^2)^{N_1}(1-|w|^2)^{N_2}.$
	Then $f_{N_1,N_2} \in L^{\vq '}_{\vt}$ and there exist two positive constants $C_{(N_1,\,\beta_1)}$ and $C_{(N_2,\,\beta_2)}$ such that
	\begin{align}\label{2x1}
		T_{0,\,\vb,\,\vc}^*f_{N_1,N_2}(z,w)=C_{(N_1,\,\beta_1)}C_{(N_2,\,\beta_2)}
		(1-|z|^2)^{b_1-\alpha_1}(1-|w|^2)^{b_2-\alpha_2}.
	\end{align}
\end{lemma}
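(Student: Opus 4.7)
The plan is to verify the two assertions — membership of $f_{N_1,N_2}$ in $L^{\vq'}_{\vt}$ and the explicit formula for $T_{0,\vb,\vc}^* f_{N_1,N_2}$ — by direct computation, exploiting the fact that $f_{N_1,N_2}(u,\eta)=(1-|u|^2)^{N_1}(1-|\eta|^2)^{N_2}$ is a tensor product of a function on $\bn$ with a function on $\bn$.

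For the membership, I would simply compute the iterated norm. Raising $|f_{N_1,N_2}(z,w)|^{q_1'}$ and integrating in $z$ against $dv_{\beta_1}$ decouples: the $w$-factor pulls out, and the $z$-integral reduces to $c_{\beta_1}\int_{\bn}(1-|z|^2)^{N_1 q_1'+\beta_1}\,dv(z)$. Since $q_1'\ge 1$ (from $q_1\in[1,\infty)$) and $N_1>0$, one has $N_1 q_1'+\beta_1\ge N_1+\beta_1>-1$, so the integral is finite; the limiting case $q_1=1$ (i.e.\ $q_1'=\infty$) is handled by noting that $(1-|z|^2)^{N_1}$ is bounded. Iterating the same reasoning in the $w$-variable, using $N_2+\beta_2>-1$, yields $\|f_{N_1,N_2}\|_{L^{\vq'}_{\vt}}<\infty$.

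For the explicit formula, I would substitute $g=f_{N_1,N_2}$ into the definition of $T_{0,\,\vb,\,\vc}^*$. The integrand becomes
\begin{align*}
\frac{(1-|u|^2)^{\beta_1+N_1}(1-|\eta|^2)^{\beta_2+N_2}}{(1-\langle z,u\rangle)^{c_1}(1-\langle w,\eta\rangle)^{c_2}},
\end{align*}
which is a product of a function of $u$ and a function of $\eta$. By Fubini (applicable since the assumptions $N_i+\beta_i>-1$ ensure integrability, checked via Lemma \ref{asymptotic2} or directly), the double integral factors into
\begin{align*}
\left[\int_{\bn}\frac{(1-|u|^2)^{\beta_1+N_1}}{(1-\langle z,u\rangle)^{c_1}}\,dv(u)\right]\left[\int_{\bn}\frac{(1-|\eta|^2)^{\beta_2+N_2}}{(1-\langle w,\eta\rangle)^{c_2}}\,dv(\eta)\right].
\end{align*}
Each factor is precisely the integral evaluated in Lemma \ref{anlyticint} (with the parameter $t=\beta_i+N_i>-1$); hence the $i$-th factor equals the constant $n\,\mathrm{B}(n,\beta_i+N_i+1)$, which we may rename $C_{(N_i,\beta_i)}$. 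Multiplying by the external weight $(1-|z|^2)^{b_1-\alpha_1}(1-|w|^2)^{b_2-\alpha_2}$ yields exactly \eqref{2x1}.

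I do not foresee a genuine obstacle: the two key points are (a) recognizing that the tensor-product structure of $f_{N_1,N_2}$ lets the double integral split cleanly, and (b) noticing that the resulting single integrals are of the precise form handled by Lemma \ref{anlyticint}, so the value is a constant independent of $z$ and $w$. The only care needed is in checking the integrability conditions $N_i q_i'+\beta_i>-1$ and $N_i+\beta_i>-1$ used in Lemma \ref{anlyticint}; both follow immediately from the hypotheses.
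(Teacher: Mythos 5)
Your proposal is correct and follows essentially the same route as the paper: substitute $f_{N_1,N_2}$ into the definition of $T_{0,\,\vb,\,\vc}^*$, factor the double integral by the tensor-product structure, and apply Lemma \ref{anlyticint} with $t=\beta_i+N_i>-1$ to identify each factor as a constant. The only difference is that you spell out the membership $f_{N_1,N_2}\in L^{\vq'}_{\vt}$, which the paper dismisses as easy; your exact value for $C_{(N_i,\beta_i)}$ may differ from $n\,\mathrm{B}(n,\beta_i+N_i+1)$ by the normalizing factor $c_{\beta_i+N_i}$ hidden in $dv_{\beta_i+N_i}$, but this is immaterial to the statement.
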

\begin{proof}
	It is easy to show that $f_{N_1,N_2} \in L^{\vq '}_{\vt}$.
	We next prove \eqref{2x1}. By the definition of the operator $T_{0,\,\vb,\,\vc}^*$, we know that
	\begin{align}\label{2e1}
		&T_{0,\,\vb,\,\vc}^*f_{N_1,N_2}(z,w)\notag\\
		&\hs=(1-|z|^2)^{b_1-\alpha_1}(1-|w|^2)^{b_2-\alpha_2}\int_{\mathbb{B}_n}\frac{(1-|u|^2)^{\beta_1+N_1}}{(1-\langle z,u \rangle)^{c_1}}\,dv(u)\int_{\mathbb{B}_n}\frac{(1-|\eta|^2)^{\beta_2+N_2}}{(1-\langle w,\eta \rangle)^{c_2}}\,dv(\eta)\notag\\
		&\hs=(1-|z|^2)^{b_1-\alpha_1}(1-|w|^2)^{b_2-\alpha_2}\int_{\mathbb{B}_n}\frac{1}{(1-\langle z,u \rangle)^{c_1}}\,dv_{\beta_1+N_1}(u)
		\int_{\mathbb{B}_n}\frac{1}{(1-\langle w,\eta \rangle)^{c_2}}\,dv_{\beta_2+N_2}(\eta).
	\end{align}
	Since $N_1+\beta_1>-1$, we apply Lemma \ref{anlyticint} with
	$f_z(u):=\frac{1}{(1-\langle z,u \rangle)^{c_1}}.$
	Thus, there exists a positive constant $C_{(N_1,\,\beta_1)}$, depending on $N_1$ and $\beta_1$, such that
	\begin{align}\label{2e2}
		\int_{\mathbb{B}_n}\frac{1}{(1-\langle z,u \rangle)^{c_1}}\,dv_{\beta_1+N_1}(u)=C_{(N_1,\,\beta_1)}.
	\end{align}
	Note that $N_2+\beta_2>-1$. Then, by using Lemma \ref{anlyticint} with
	$f_w(\eta):=\frac{1}{(1-\langle w,\eta \rangle)^{c_2}},$
	we conclude that there exists a positive constant $C_{(N_2,\,\beta_2)}$, depending on
	$N_2$ and $\beta_2$, such that
	$
	\int_{\mathbb{B}_n}\frac{1}{(1-\langle w,\eta \rangle)^{c_2}}\,dv_{\beta_2+N_2}(\eta)=C_{(N_2,\,\beta_2)}.
	$
	Combining this, \eqref{2e1}, and \eqref{2e2}, we further deduce that
	$$T_{0,\,\vb,\,\vc}^*f_{N_1,N_2}(z,w)=C_{(N_1,\,\beta_1)}C_{(N_2,\,\beta_2)}(1-|z|^2)^{b_1-\alpha_1}(1-|w|^2)^{b_2-\alpha_2}.$$
	Therefore, \eqref{2x1} holds true and hence we finish the proof of Lemma \ref{test1}.
\end{proof}

\begin{lemma}\label{test2}
	Let $\vp:=(p_1,p_2)\in[1,\fz)^2$, $\vq:=(q_1,q_2)\in[1,\fz)^2$, $\alpha_i+1\leq p_i(b_i+1)$ when $p_i>1$, and
	$\alpha_i<b_i$ when $p_i=1$ for $i\in\{1,2\}$.
	For any given $\xi,\zeta\in \mathbb{B}_n$, let
	\begin{align*}
		f_{\xi,\zeta}(z,w):=\frac{(1-|\xi|^2)^{n+1+b_1-(n+1+\alpha_1)/p_1}}{(1-\langle z,\xi \rangle)^{n+1+b_1}}\frac{(1-|\zeta|^2)^{n+1+b_2-(n+1+\alpha_2)/p_2}}{(1-\langle w,\zeta \rangle)^{n+1+b_2}}.
	\end{align*}
	Then $f_{\xi,\zeta}\in \lv$ and, if the operator $T_{0,\,\vb,\,\vc}$ is bounded from $\lv$ to $\lt$, then there exist two positive constants $C_1$ and $C_2$ such that, for any $\xi\in\bn$,
	\begin{align}\label{2e4}
		(1-|\xi|^2)^{(n+1+b_1)q_1-(n+1+\alpha_1)q_1/p_1}
		\int_{\mathbb{B}_n}\frac{(1-|z|^2)^{\beta_1}}
		{|1-\langle \xi,z \rangle|^{c_1q_1}}\,dv(z)\leq C_1
	\end{align}
	and, for any $\zeta\in\bn$,
	\begin{align}\label{2e5}
		(1-|\zeta|^2)^{(n+1+b_2)q_2-(n+1+\alpha_2)q_2/p_2}
		\int_{\mathbb{B}_n}\frac{(1-|w|^2)^{\beta_2}}
		{|1-\langle \zeta,w \rangle|^{c_2q_2}}\,dv(w)\leq C_2.
	\end{align}
\end{lemma}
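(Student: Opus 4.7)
The plan is to proceed in three stages: (i) verify that $f_{\xi,\zeta}\in\lv$ with norm bounded uniformly in $\xi,\zeta\in\bn$; (ii) compute $T_{0,\,\vb,\,\vc}f_{\xi,\zeta}$ in closed form by reducing the inner $u$- and $\eta$-integrals to (the conjugate of) the reproducing formula in Lemma \ref{reproduce}; (iii) invoke the assumed boundedness and then specialize to $\zeta=0$ (resp.\ $\xi=0$) to isolate \eqref{2e4} (resp.\ \eqref{2e5}).

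For (i), since $f_{\xi,\zeta}$ is a tensor product of a function of $z$ and a function of $w$, the mixed norm factorizes, and it suffices to control, for instance, the $z$-factor
\begin{align*}
(1-|\xi|^2)^{p_1(n+1+b_1)-(n+1+\alpha_1)}\int_\bn\frac{(1-|z|^2)^{\alpha_1}}{|1-\langle z,\xi\rangle|^{p_1(n+1+b_1)}}\,dv(z).
\end{align*}
Either hypothesis ($\alpha_1+1\le p_1(b_1+1)$ when $p_1>1$, or $\alpha_1<b_1$ when $p_1=1$) forces the exponent $p_1(n+1+b_1)-(n+1+\alpha_1)$ to be strictly positive (it reduces to $(p_1-1)n$ at the borderline of the first case and to $b_1-\alpha_1$ in the second), so Lemma \ref{asymptotic2}(ii) delivers the correct decay in $(1-|\xi|^2)$, canceling the prefactor. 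The $w$-factor is treated identically, yielding $\|f_{\xi,\zeta}\|_{\lv}\ls 1$ uniformly.

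For (ii), inserting $f_{\xi,\zeta}$ into $T_{0,\,\vb,\,\vc}$ decouples the double integral into a product of single integrals. To evaluate the $u$-piece
\begin{align*}
I_z:=\int_{\bn}\frac{(1-|u|^2)^{b_1}}{(1-\langle z,u\rangle)^{c_1}(1-\langle u,\xi\rangle)^{n+1+b_1}}\,dv(u),
\end{align*}
I would take the complex conjugate: then $(1-\langle z,u\rangle)^{-c_1}$ becomes the holomorphic (in $u$) function $g_z(u):=(1-\langle u,z\rangle)^{-c_1}$ and the remaining factor becomes the Bergman-type kernel $(1-\langle\xi,u\rangle)^{-(n+1+b_1)}$. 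Applying Lemma \ref{reproduce} with $t=b_1$ and $w\to z$ yields $g_z(\xi)=c_{b_1}\overline{I_z}$, hence $I_z=[c_{b_1}(1-\langle z,\xi\rangle)^{c_1}]^{-1}$. The same argument handles the $\eta$-integral, producing the closed form
\begin{align*}
T_{0,\,\vb,\,\vc}f_{\xi,\zeta}(z,w)=\frac{(1-|\xi|^2)^{N_1}(1-|\zeta|^2)^{N_2}}{c_{b_1}c_{b_2}(1-\langle z,\xi\rangle)^{c_1}(1-\langle w,\zeta\rangle)^{c_2}},
\end{align*}
where $N_i:=n+1+b_i-(n+1+\alpha_i)/p_i$.

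Finally, the assumed boundedness gives $\|T_{0,\,\vb,\,\vc}f_{\xi,\zeta}\|_{\lt}\ls\|f_{\xi,\zeta}\|_{\lv}\ls 1$. Since the closed form separates in $z$ and $w$, the $\lt$-norm factorizes as a product of two single-variable integrals. Setting $\zeta=0$ makes $(1-|\zeta|^2)^{N_2}=1$ and the $w$-integral a positive constant (since $|1-\langle w,\zeta\rangle|=1$), and raising the surviving inequality to the $q_1$-power yields precisely \eqref{2e4}; symmetrically, $\xi=0$ produces \eqref{2e5}. I expect the delicate step to be the conjugation trick in (ii): the integrand in $I_z$ is a product of an anti-holomorphic factor and a holomorphic factor in $u$, and Lemma \ref{reproduce} applies only after conjugation swaps their roles; once this identification is made, the remaining work is essentially bookkeeping of normalizing constants.
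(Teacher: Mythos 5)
Your proof is correct and follows essentially the same route as the paper: uniform control of $\|f_{\xi,\zeta}\|_{\lv}$ via Lemma \ref{asymptotic2}(ii), the conjugated reproducing formula of Lemma \ref{reproduce} to evaluate $T_{0,\,\vb,\,\vc}f_{\xi,\zeta}$ in closed form, and the factorized $\lt$-norm bound. Your explicit specialization to $\zeta=0$ (resp.\ $\xi=0$) simply makes precise the final deduction that the paper leaves implicit.
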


\begin{proof}
	We first claim that $f_{\xi,\zeta}\in \lv$ and there exists a constant $M$ independent of $\xi$ and $\zeta$ such that $||f_{\xi,\zeta}||_{\lv}\leq M.$
	To achieve this, notice that
	\begin{align}\label{2e6}
		||f_{\xi,\zeta}||_{\lv}&=(1-|\xi|^2)^{n+1+b_1-(n+1+\alpha_1)/p_1}\left[ \int_{\mathbb{B}_n}\frac{(1-|z|^2)^{\alpha_1}}{|1-\langle z,\xi \rangle|^{(n+1+b_1)p_1}}\,dv(z)\right]^{1/p_1}\notag\\
		&\hs\hs\times (1-|\zeta|^2)^{n+1+b_2-(n+1+\alpha_2)/p_2}\left[ \int_{\mathbb{B}_n}\frac{(1-|w|^2)^{\alpha_2}}{|1-\langle w,\zeta \rangle|^{(n+1+b_2)p_2}}\,dv(w)\right]^{1/p_2}
	\end{align}
	and, for $i\in\{1,2\}$, $n+1+\alpha_i-(n+1+b_i)p_i=n(1-p_i)+[\alpha_i+1-p_i(b_i+1)]<0.$
	Thus, by this and Lemma \ref{asymptotic2}(ii), we conclude that, for any given $\xi \in\bn$,
	\begin{align*}
		\int_{\mathbb{B}_n}\frac{(1-|z|^2)^{\alpha_1}}{|1-\langle z,\xi \rangle|^{(n+1+b_1)p_1}}\,dv(z)\ls (1-|\xi|^2)^{n+1+\alpha_1-(n+1+b_1)p_1}
	\end{align*}
	and, for any given $\zeta \in\bn$,
	\begin{align*}
		\int_{\mathbb{B}_n}\frac{(1-|w|^2)^{\alpha_2}}{|1-\langle w,\zeta \rangle|^{(n+1+b_2)p_2}}\,dv(w)\ls (1-|\zeta|^2)^{n+1+\alpha_2-(n+1+b_2)p_2}.
	\end{align*}
	From these and \eqref{2e6}, we infer that $\|f_{\xi,\zeta}\|_{\lv}\ls 1$ and hence $f_{\xi,\zeta}\in \lv$.
	
	We next prove \eqref{2e4} and \eqref{2e5}. To this end,
	since $p_i(b_i+1)\geq\alpha_i+1>0$ for $i\in\{1,2\}$, we have $b_1>-1$ and $b_2>-1$.
	For any given $z \in \mathbb{B}_n$ and any $u\in\bn$, note that
	$2\ge|1-\langle u,z \rangle|\geq 1-|\langle u,z \rangle|\geq 1-|z|.$
	Thus,
	$f_z(u):=\frac{1}{(1-\langle u,z \rangle)^{c_1}}\in L^\infty(\mathbb{B}_n,dv)\subset L^1(\mathbb{B}_n,dv_{b_1}).$
	Similarly, for any given $w \in \mathbb{B}_n$, we have
	$g_w(\eta):=\frac{1}{(1-\langle \eta,w \rangle)^{c_2}}\in L^\infty(\mathbb{B}_n,dv)\subset L^1(\mathbb{B}_n,dv_{b_2}).$
	Then, applying Lemma \ref{reproduce} to $f_z(u)$ and $g_z(\eta)$, we deduce that
	\begin{align}\label{2e15}
		\frac{1}{(1-\langle \xi,z \rangle)^{c_1}}=\int_{\mathbb{B}_n}\frac{(1-|u|^2)^{b_1}\,dv(u)}{(1-\langle u,z \rangle)^{c_1}(1-\langle \xi,u \rangle)^{n+1+b_1}}
	\end{align}
	and
	\begin{align*}
		\frac{1}{(1-\langle \zeta,w \rangle)^{c_2}}=\int_{\mathbb{B}_n}\frac{(1-|\eta|^2)^{b_2}\,dv(\eta)}{(1-\langle \eta,w \rangle)^{c_2}(1-\langle \zeta,\eta \rangle)^{n+1+b_2}}.
	\end{align*}
	From this, \eqref{2e15}, and the definition of $T_{0,\,\vb,\,\vc}$, we deduce that
	\begin{align*}
		\overline{T_{0,\,\vb,\,\vc}f_{\xi,\zeta}(z,w)}=\frac{(1-|\xi|^2)^{(n+1+b_1)-(n+1+\alpha_1)/p_1}}
		{(1-\langle \xi,z \rangle)^{c_1}} \frac{(1-|\zeta|^2)^{(n+1+b_2)-(n+1+\alpha_2)/p_2}}
		{(1-\langle \zeta,w \rangle)^{c_2}}.
	\end{align*}
	Furthermore, since $T_{0,\,\vb,\,\vc}$ is bounded from $\lv$ to $\lt$ and by using the above assertion,
	we know that there exists a constant $M$, independent of $\xi$ and $\zeta$, such that
	\begin{align*}
		||\overline{T_{0,\,\vb,\,\vc}f_{\xi,\zeta}}||_{\lt}&=(1-|\xi|^2)^{(n+1+b_1)-(n+1+\alpha_1)/p_1}
		\left[\int_{\mathbb{B}_n}\frac{(1-|z|^2)^{\beta_1}}
		{|1-\langle \xi,z \rangle|^{c_1q_1}}\,dv(z)\right]^{1/q_1}\\
		&\hs\times (1-|\zeta|^2)^{(n+1+b_2)-(n+1+\alpha_2)/p_2}
		\left[\int_{\mathbb{B}_n}\frac{(1-|w|^2)^{\beta_2}}
		{|1-\langle \zeta,w \rangle|^{c_2q_2}}\,dv(w)\right]^{1/q_2}\\
		&\ls M.
	\end{align*}
	This implies \eqref{2e4} and \eqref{2e5} and hence finishes the proof.
\end{proof}

With the help of the above lemmas, we next prove the necessary of the main theorems as follows.

\begin{lemma}\label{2l1}
	Let $1<p_1\le q_1<\infty$ and $1<p_2\le q_2<\infty$.
	If the operator $T_{0,\,\vb,\,\vc}$ is bounded from $\lv$ to $\lt$, then
	$\alpha_1+1<p_1(b_1+1)$ and $\alpha_2+1<p_2(b_2+1)$.
\end{lemma}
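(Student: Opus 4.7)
The plan is to exploit duality by applying the adjoint $T^*_{0,\vb,\vc}$ to the explicit test functions of Lemma \ref{test1}, and to read off the required inequalities from the resulting integrability constraint. By Lemma \ref{boundeddual}, the assumed boundedness of $T_{0,\vb,\vc}\colon\lv\to\lt$ implies the boundedness of its adjoint $T^*_{0,\vb,\vc}\colon L^{\vq'}_{\vt}\to L^{\vp'}_{\vf}$.

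Next, I would pick any positive numbers $N_1,N_2$ (for instance $N_1=N_2=1$) and consider the tensor-product function $f_{N_1,N_2}(z,w):=(1-|z|^2)^{N_1}(1-|w|^2)^{N_2}$, which Lemma \ref{test1} places in $L^{\vq'}_{\vt}$. That same lemma delivers the explicit formula
\[
T^*_{0,\vb,\vc}f_{N_1,N_2}(z,w)=C_{(N_1,\beta_1)}C_{(N_2,\beta_2)}(1-|z|^2)^{b_1-\alpha_1}(1-|w|^2)^{b_2-\alpha_2},
\]
so the right-hand side must lie in $L^{\vp'}_{\vf}$. Since this function factorizes as a tensor product in $z$ and $w$, its mixed-norm integral factorizes as well; in particular the one-variable integral
\[
\int_{\bn}(1-|z|^2)^{(b_1-\alpha_1)p_1'+\alpha_1}\,dv(z)
\]
must converge. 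By the standard Beta-integral criterion on $\bn$, convergence forces $(b_1-\alpha_1)p_1'+\alpha_1>-1$, which after multiplying by $p_1-1>0$ and collecting terms rewrites precisely as $\alpha_1+1<p_1(b_1+1)$. Applying the same reasoning to the outer integral in $w$ yields $\alpha_2+1<p_2(b_2+1)$.

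There is no substantial obstacle in this argument beyond careful bookkeeping: Lemmas \ref{boundeddual} and \ref{test1} have already packaged the difficult computations, while the tensor-product structure of $f_{N_1,N_2}$ decouples the two parameter directions and produces independent constraints on $(\alpha_i,b_i,p_i)$ for each $i\in\{1,2\}$. Strictness of the inequality is automatic, since the borderline exponent $(b_i-\alpha_i)p_i'+\alpha_i=-1$ produces a logarithmically divergent radial integral near $r=1$.
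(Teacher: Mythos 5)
Your argument is exactly the paper's proof: both invoke Lemma \ref{boundeddual} to pass to the adjoint, apply Lemma \ref{test1} to the same tensor-product test function $f_{N_1,N_2}$, and read off $\alpha_i+1<p_i(b_i+1)$ from the finiteness of the factored integrals $\int_{\bn}(1-|z|^2)^{(b_i-\alpha_i)p_i'+\alpha_i}\,dv$. The algebraic reduction and the strictness observation are both correct, so there is nothing to add.
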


\begin{proof}
	Let $N_1$ and $N_2$ be positive numbers such that $N_1+\beta_1>-1$ and $N_2+\beta_2>-1$. Define
	$f_{N_1,N_2}(z,w):=(1-|z|^2)^{N_1}(1-|w|^2)^{N_2}$
	for any $z\in\bn$ and $w\in\bn$.  By Lemma \ref{boundeddual}, we then know that $T_{0,\,\vb,\,\vc}^*$ is bounded from $L^{\vq'}_{\vt}$ to $L^{\vp'}_{\vf}$. From Lemma \ref{test1}, it follows that $T_{0,\,\vb,\,\vc}^*f_{N_1,N_2}\in L^{\vp'}_{\vf}$ and
	$T_{0,\,\vb,\,\vc}^*f_{N_1,N_2}(z,w)=C_{(N_1,\,\beta_1)}C_{(N_2,\,\beta_2)}
	(1-|z|^2)^{b_1-\alpha_1}(1-|w|^2)^{b_2-\alpha_2}.$
	Therefore
	\begin{align*}
		\lf\|T_{0,\,\vb,\,\vc}^*f_{N_1,N_2}\r\|_{L^{\vp'}_{\vf}}&=C_{(N_1,\,\beta_1)}
		C_{(N_2,\,\beta_2)}\left[\int_{\mathbb{B}_n}(1-|z|^2)^{p'_1(b_1-\alpha_1)+\alpha_1}\,dv(u)\right]^{1/p_1'}
		\\
		&\hs\hs\times \left[\int_{\mathbb{B}_n}(1-|w|^2)^{p'_2(b_2-\alpha_2)+\alpha_2}\,dv(\eta)\right]^{1/p_2'}\\
		&<\fz,
	\end{align*}
	which implies that $\alpha_1+1<p_1(b_1+1)$ and $\alpha_2+1<p_2(b_2+1)$.
\end{proof}

By Lemmas \ref{boundeddual} and \ref{test1} and also an argument similar to that used in the proof
of Lemma \ref{2l1}, we obatin the following three results; the details are omitted.

\begin{lemma}\label{2l2}
	Let $1<p_1\le q_1<\infty$ and $1=p_2\le q_2<\infty$.
	If the operator $T_{0,\,\vb,\,\vc}$ is bounded from $\lv$ to $\lt$, then
	$\alpha_1+1<p_1(b_1+1)$ and
	$\alpha_2\le b_2.$
\end{lemma}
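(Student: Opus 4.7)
The plan is to imitate the proof of Lemma \ref{2l1}, with the only essential change being that the outer norm index now degenerates to infinity. By Lemma \ref{boundeddual}, the boundedness of $T_{0,\,\vb,\,\vc}\colon\lv\to\lt$ implies that $T_{0,\,\vb,\,\vc}^*$ is bounded from $L^{\vq'}_{\vt}$ to $L^{\vp'}_{\vf}$, where $\vp'=(p_1',\fz)$ since $p_2=1$. Pick positive numbers $N_1, N_2$ with $N_i+\beta_i>-1$ for $i\in\{1,2\}$ and consider the test function $f_{N_1,N_2}(z,w):=(1-|z|^2)^{N_1}(1-|w|^2)^{N_2}$; since $q_1,q_2<\fz$, Lemma \ref{test1} applies and yields $f_{N_1,N_2}\in L^{\vq'}_{\vt}$ together with the identity
\begin{align*}
T_{0,\,\vb,\,\vc}^*f_{N_1,N_2}(z,w)=C_{(N_1,\,\beta_1)}C_{(N_2,\,\beta_2)}(1-|z|^2)^{b_1-\alpha_1}(1-|w|^2)^{b_2-\alpha_2}.
\end{align*}

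The next step is to write out the mixed norm $\|T_{0,\,\vb,\,\vc}^*f_{N_1,N_2}\|_{L^{\vp'}_{\vf}}$ and to extract two finiteness conditions. Because the outer exponent is $\fz$, this norm takes the form
\begin{align*}
\esssup_{w\in\bn}(1-|w|^2)^{b_2-\alpha_2}\left[\int_{\bn}(1-|z|^2)^{p_1'(b_1-\alpha_1)+\alpha_1}\,dv(z)\right]^{1/p_1'},
\end{align*}
up to the constant $C_{(N_1,\,\beta_1)}C_{(N_2,\,\beta_2)}$. Its finiteness is equivalent to the conjunction of two separate finiteness conditions, one on $z$ and one on $w$.

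Finiteness of the $z$-integral requires $p_1'(b_1-\alpha_1)+\alpha_1>-1$; a short algebraic manipulation (using $p_1'(p_1-1)=p_1$ since $p_1>1$) rewrites this as $\alpha_1+1<p_1(b_1+1)$. Finiteness of the essential supremum over $w\in\bn$ of $(1-|w|^2)^{b_2-\alpha_2}$ is equivalent to $b_2-\alpha_2\ge 0$, i.e., $\alpha_2\le b_2$. Combining these two conclusions gives exactly the desired statement.

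I do not anticipate a serious obstacle: the argument is structurally identical to Lemma \ref{2l1}. The one subtle point worth flagging is that the passage $p_2\to 1$ in the original $\lv$ space corresponds to $p_2'=\fz$ in the target $L^{\vp'}_{\vf}$, so the endpoint analysis on the $w$-variable yields only the non-strict inequality $\alpha_2\le b_2$, in contrast with the strict inequality obtained in the $p_1>1$ factor; this matches the statement of the lemma.
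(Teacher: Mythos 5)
Your argument is correct and is precisely the one the paper intends: the paper omits the proof of this lemma, stating only that it follows from Lemmas \ref{boundeddual} and \ref{test1} by the same reasoning as Lemma \ref{2l1}, and you have filled in exactly those details, including the key point that $p_2=1$ forces $p_2'=\infty$ so the outer norm becomes an essential supremum and yields the non-strict inequality $\alpha_2\le b_2$.
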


\begin{lemma}\label{2l5}
	Let $1=p_1\le q_1<\infty$ and $1<p_2\leq q_2<\infty$.
	If the operator $T_{0,\,\vb,\,\vc}$ is bounded from $\lv$ to $\lt$, then
	$\alpha_1\leq b_1$ and
	$\alpha_2+1<p_2(b_2+1).$
\end{lemma}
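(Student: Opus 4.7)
The plan is to mirror the proof of Lemma \ref{2l1} essentially verbatim, with the key difference that in the present setting $\vp=(1,p_2)$ gives $\vp'=(\infty,p_2')$, so the first slot of the target norm $L^{\vp'}_{\vf}$ becomes an $L^\infty$-norm, which is what converts the condition on $\alpha_1$ from a strict inequality to a non-strict one.

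First, I would invoke Lemma \ref{boundeddual} to pass from the hypothesized boundedness of $T_{0,\,\vb,\,\vc}:\lv\to\lt$ to the boundedness of its adjoint $T^{*}_{0,\,\vb,\,\vc}:L^{\vq'}_{\vt}\to L^{\vp'}_{\vf}$. Next I pick two positive numbers $N_1,N_2$ with $N_i+\beta_i>-1$ and test against the function $f_{N_1,N_2}(z,w):=(1-|z|^2)^{N_1}(1-|w|^2)^{N_2}$. By Lemma \ref{test1}, $f_{N_1,N_2}\in L^{\vq'}_{\vt}$ and
\begin{align*}
T^{*}_{0,\,\vb,\,\vc}f_{N_1,N_2}(z,w)=C_{(N_1,\beta_1)}C_{(N_2,\beta_2)}(1-|z|^2)^{b_1-\alpha_1}(1-|w|^2)^{b_2-\alpha_2}.
\end{align*}
Hence this function must lie in $L^{\vp'}_{\vf}=L^{(\infty,p_2')}_{\vf}$.

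I would then read off the two necessary conditions from the finiteness of
\begin{align*}
\left\{\int_{\mathbb{B}_n}\Bigl[\esssup_{z\in\mathbb{B}_n}(1-|z|^2)^{b_1-\alpha_1}\Bigr]^{p_2'}(1-|w|^2)^{p_2'(b_2-\alpha_2)+\alpha_2}\,dv(w)\right\}^{1/p_2'}.
\end{align*}
The inner essential supremum is finite if and only if the exponent $b_1-\alpha_1$ is non-negative, i.e. $\alpha_1\leq b_1$ (here is where the non-strict inequality appears, since $(1-|z|^2)^{0}=1$ is allowed). The remaining outer integral is finite if and only if $p_2'(b_2-\alpha_2)+\alpha_2>-1$, which, using $p_2'=p_2/(p_2-1)$ and clearing denominators, is equivalent to $\alpha_2+1<p_2(b_2+1)$.

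No serious obstacle is expected: the only mildly subtle point is correctly interpreting the $p_1'=\infty$ slot in the mixed-norm definition and noting that the essential supremum with respect to $dv_{\alpha_1}$ coincides with the one with respect to $dv$ (since $\alpha_1>-1$), so the divergence of $(1-|z|^2)^{b_1-\alpha_1}$ as $|z|\to 1^-$ when $b_1<\alpha_1$ is what forces $\alpha_1\leq b_1$. Putting these two conclusions together yields the lemma.
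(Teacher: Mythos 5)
Your proposal is correct and takes essentially the same approach the paper intends for this lemma (the paper omits the details, saying only that it follows from Lemmas \ref{boundeddual} and \ref{test1} together with the argument of Lemma \ref{2l1}): you test the adjoint against $f_{N_1,N_2}$ and read off the necessary conditions from the finiteness of the $L^{(\infty,p_2')}_{\vf}$-norm of $(1-|z|^2)^{b_1-\alpha_1}(1-|w|^2)^{b_2-\alpha_2}$. Both of your key computations are right: the $L^\infty$ slot forces $b_1-\alpha_1\ge 0$ (hence the non-strict inequality $\alpha_1\le b_1$), and $p_2'(b_2-\alpha_2)+\alpha_2>-1$ is indeed equivalent to $\alpha_2+1<p_2(b_2+1)$.
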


\begin{lemma}\label{2l6}
	Let $1=p_1\le q_1<\infty$ and $1=p_2\le q_2<\infty$.
	If the operator $T_{0,\,\vb,\,\vc}$ is bounded from $\lv$ to $\lt$, then
	$\alpha_1\leq b_1$ and
	$\alpha_2\le b_2.$
\end{lemma}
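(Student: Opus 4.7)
The plan is to follow the template established in the proof of Lemma \ref{2l1}, now with $\vp'=(\infty,\infty)$, and to exploit the fact that the only way an unbounded function of the form $(1-|z|^2)^{b_1-\alpha_1}(1-|w|^2)^{b_2-\alpha_2}$ can lie in $L^{\infty}(\bn\times\bn)$ is for both exponents to be nonnegative.

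First I would pick positive numbers $N_1,N_2$ with $N_i+\beta_i>-1$ and set $f_{N_1,N_2}(z,w):=(1-|z|^2)^{N_1}(1-|w|^2)^{N_2}$. By Lemma \ref{test1}, $f_{N_1,N_2}\in L^{\vq'}_{\vt}$, so we may feed it into the dual operator. Since $\vp=(1,1)$ gives $\vp'=(\infty,\infty)$, Lemma \ref{boundeddual} ensures that $T_{0,\vb,\vc}^{*}$ is bounded from $L^{\vq'}_{\vt}$ to $L^{(\infty,\infty)}_{\vf}$; note that because $dv_{\alpha_1}\,dv_{\alpha_2}$ is mutually absolutely continuous with Lebesgue measure on $\bn\times\bn$, the target norm coincides with the ordinary essential supremum on $\bn\times\bn$.

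Next I would invoke Lemma \ref{test1} again, but now for its second conclusion, which evaluates
\[
T_{0,\vb,\vc}^{*}f_{N_1,N_2}(z,w)=C_{(N_1,\beta_1)}C_{(N_2,\beta_2)}(1-|z|^2)^{b_1-\alpha_1}(1-|w|^2)^{b_2-\alpha_2}.
\]
The boundedness of $T_{0,\vb,\vc}^{*}$ into $L^{(\infty,\infty)}_{\vf}$ then forces
\[
\operatorname*{ess\,sup}_{(z,w)\in\bn\times\bn}(1-|z|^2)^{b_1-\alpha_1}(1-|w|^2)^{b_2-\alpha_2}<\infty.
\]
Since $1-|z|^2\in(0,1]$, this supremum is finite if and only if both $b_1-\alpha_1\ge 0$ and $b_2-\alpha_2\ge 0$; otherwise letting $|z|\to 1^-$ or $|w|\to 1^-$ produces blow-up. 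This gives precisely $\alpha_1\le b_1$ and $\alpha_2\le b_2$.

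There is essentially no obstacle here beyond tracking definitions: the argument is a direct specialization of the proof of Lemma \ref{2l1} to the $\vp'=(\infty,\infty)$ endpoint, where the integrability criterion $p'_i(b_i-\alpha_i)+\alpha_i>-1$ degenerates into the pointwise boundedness criterion $b_i-\alpha_i\ge 0$. The only care needed is in observing that the $L^{\infty}$-norm with respect to a weight smooth and positive on $\bn\times\bn$ coincides with the unweighted essential supremum, so that the endpoint inequalities are indeed the sharp necessary conditions.
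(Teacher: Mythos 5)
Your argument is correct and is exactly the route the paper intends: the paper proves Lemma \ref{2l6} by citing Lemmas \ref{boundeddual} and \ref{test1} together with ``an argument similar to that used in the proof of Lemma \ref{2l1},'' and your write-up simply supplies the omitted details, namely that for $\vp'=(\infty,\infty)$ the membership of $C_{(N_1,\beta_1)}C_{(N_2,\beta_2)}(1-|z|^2)^{b_1-\alpha_1}(1-|w|^2)^{b_2-\alpha_2}$ in $L^{\vp'}_{\vf}$ reduces to an essential-supremum condition forcing $b_i-\alpha_i\ge 0$. No gaps.
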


\begin{lemma}\label{2l7}
	Let $1\le p_1\leq q_1<\infty$, $1\le p_2\leq q_2<\infty$, $\alpha_1+1<p_1(b_1+1)$, and
	$\alpha_2+1<p_2(b_2+1)$.
	If the operator $T_{0,\,\vb,\,\vc}$ is bounded from $\lv$ to $\lt$, then
	\begin{align*}
		\left\{
		\begin{aligned}
			&c_1\leq n+1+b_1+\frac{n+1+\beta_1}{q_1}-\frac{n+1+\alpha_1}{p_1},\\
			&c_2\leq n+1+b_2+\frac{n+1+\beta_2}{q_2}-\frac{n+1+\alpha_2}{p_2}.
		\end{aligned}
		\right.
	\end{align*}
\end{lemma}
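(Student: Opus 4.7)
The plan is to invoke Lemma \ref{test2} and then extract the upper bounds on $c_1$ and $c_2$ by letting the test point approach the boundary of $\bn$. First, I verify that the hypotheses of Lemma \ref{test2} are met: the strict assumption $\alpha_i+1<p_i(b_i+1)$ implies $\alpha_i+1\leq p_i(b_i+1)$ when $p_i>1$ and $\alpha_i<b_i$ when $p_i=1$, so Lemma \ref{test2} applies. Combined with the boundedness of $T_{0,\,\vb,\,\vc}$ from $\lv$ to $\lt$, this yields the pointwise estimates \eqref{2e4} and \eqref{2e5} on all of $\bn$. By the symmetry between the two coordinates, it suffices to derive the bound on $c_1$ from \eqref{2e4}; the bound on $c_2$ follows identically from \eqref{2e5}.

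Next, I would set $A_1:=(n+1+b_1)q_1-(n+1+\alpha_1)q_1/p_1$ and $E_1:=n+1+\beta_1-c_1 q_1$. I first observe that $A_1>0$: since $p_1\geq 1$, the strict inequality $\alpha_1+1<p_1(b_1+1)$ gives $n+1+\alpha_1<n+p_1(b_1+1)\leq p_1(n+1+b_1)$, so $A_1=(q_1/p_1)[p_1(n+1+b_1)-(n+1+\alpha_1)]>0$, equivalently $n+1+b_1-(n+1+\alpha_1)/p_1>0$. I would then apply Lemma \ref{asymptotic} to the inner integral in \eqref{2e4} and split into three cases on the sign of $E_1$. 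If $E_1>0$, Lemma \ref{asymptotic}(i) shows this integral is uniformly bounded in $\xi$, so $c_1<(n+1+\beta_1)/q_1$, which together with $n+1+b_1-(n+1+\alpha_1)/p_1>0$ immediately yields the desired bound. If $E_1<0$, Lemma \ref{asymptotic}(iii) gives the integral the asymptotic $(1-|\xi|^2)^{E_1}$ as $|\xi|\to 1^-$, and for \eqref{2e4} to persist at the boundary one must have $A_1+E_1\geq 0$, which on division by $q_1$ is exactly $c_1\leq n+1+b_1+(n+1+\beta_1)/q_1-(n+1+\alpha_1)/p_1$.

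Finally, in the borderline case $E_1=0$, Lemma \ref{asymptotic}(ii) shows the inner integral grows like $\log\frac{1}{1-|\xi|^2}$, so \eqref{2e4} forces $(1-|\xi|^2)^{A_1}\log\frac{1}{1-|\xi|^2}$ to stay bounded near $|\xi|=1$; the strict positivity $A_1>0$ established above makes this automatic, and the resulting equality $c_1=(n+1+\beta_1)/q_1$ combined with $n+1+b_1-(n+1+\alpha_1)/p_1>0$ again gives the target estimate. Repeating the same case analysis for \eqref{2e5} produces the bound on $c_2$, completing the proof. The main subtlety lies in the logarithmic case $E_1=0$: without the strictness in $\alpha_i+1<p_i(b_i+1)$, one could have $A_1=0$ when $p_1=1$, and the logarithmic blow-up would then contradict \eqref{2e4}; our strict hypothesis is precisely what prevents this pathology.
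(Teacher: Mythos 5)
Your proposal is correct and follows essentially the same route as the paper: it invokes Lemma \ref{test2} to obtain \eqref{2e4} and \eqref{2e5}, verifies the positivity of $(n+1+b_1)-(n+1+\alpha_1)/p_1$ exactly as in \eqref{2x2}, and then performs the same three-way case analysis on the sign of $n+1+\beta_1-c_1q_1$ via Lemma \ref{asymptotic}, with only the subcase $c_1>\frac{n+1+\beta_1}{q_1}$ yielding the nontrivial constraint. The minor phrasing slip in the case $E_1>0$ (that case is by definition $c_1<\frac{n+1+\beta_1}{q_1}$, rather than a consequence of the boundedness of the integral) does not affect the argument.
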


\begin{proof}
	We first prove the inequality about $c_1$. For this purpose, observe that
	\begin{align}\label{2x2}
		(n+1+b_1)-(n+1+\alpha_1)/p_1&=[n(p_1-1)+p_1(b_1+1)-(\alpha_1+1)]/p_1>0.
	\end{align}
	If $n+1+\beta_1-c_1q_1>0$, or equivalently,
	$c_1<\frac{n+1+\beta_1}{q_1},$ then, using Lemma \ref{asymptotic2}(i) and \eqref{2x2}, we know that
	\begin{align*}
		&(1-|\xi|^2)^{(n+1+b_1)q_1-(n+1+\alpha_1)q_1/p_1}
		\int_{\mathbb{B}_n}\frac{(1-|z|^2)^{\beta_1}}
		{|1-\langle \xi,z \rangle|^{c_1q_1}}\,dv(z)\\
		&\hs\ls (1-|\xi|^2)^{(n+1+b_1)q_1-(n+1+\alpha_1)q_1/p_1}
		\leq C.
	\end{align*}
	If $n+1+\beta_1-c_1q_1=0$, or equivalently,
	$c_1=\frac{n+1+\beta_1}{q_1},$
	then, by Lemma \ref{asymptotic}(ii), \eqref{2x2}, and the fact that $x^{\vaz}\log x \to 0 $ as $x \to 0$ for any $\vaz>0$,
	we find that there exists a positive constant $R$ such that, for any $R<|\xi|<1$,
	\begin{align*}
		&(1-|\xi|^2)^{(n+1+b_1)q_1-(n+1+\alpha_1)q_1/p_1}
		\int_{\mathbb{B}_n}\frac{(1-|z|^2)^{\beta_1}}
		{|1-\langle \xi,z \rangle|^{c_1q_1}}\,dv(z)\\
		&\hs\ls -(1-|\xi|^2)^{(n+1+b_1)q_1-(n+1+\alpha_1)q_1/p_1}\log (1-|\xi|^2)
		\leq C.
	\end{align*}
	For $|\xi|\leq R$, it is obvious that
	$$(1-|\xi|^2)^{(n+1+b_1)q_1-(n+1+\alpha_1)q_1/p_1}
	\int_{\mathbb{B}_n}\frac{(1-|z|^2)^{\beta_1}}
	{|1-\langle \xi,z \rangle|^{c_1q_1}}\,dv(z)\leq C.$$
	If $n+1+\beta_1-c_1q_1<0$, or equivalently,
	$c_1>\frac{n+1+\beta_1}{q_1},$ then, applying Lemma \ref{asymptotic}(iii), 
	we find that there exists a constant $R$ such that, for $R<|\xi|<1$,
	\begin{align*}
		&(1-|\xi|^2)^{(n+1+b_1)q_1-(n+1+\alpha_1)q_1/p_1}
		\int_{\mathbb{B}_n}\frac{(1-|z|^2)^{\beta_1}}
		{|1-\langle \xi,z \rangle|^{c_1q_1}}\,dv(z)\\
		&\hs\sim (1-|\xi|^2)^{(n+1+b_1)q_1-(n+1+\alpha_1)q_1/p_1+n+1+\beta_1-c_1q_1},
	\end{align*}
	which, together with \eqref{2e4} of Lemma \ref{test2}, implies that
	$$
	\begin{cases}
		n+1+\beta_1-c_1q_1<0,\\
		(n+1+b_1)q_1-(n+1+\alpha_1)q_1/p_1+n+1+\beta_1-c_1q_1\geq 0.
	\end{cases}
	$$
	It is equivalent to
	$\frac{n+1+\beta_1}{q_1}<c_1\leq n+1+b_1+\frac{n+1+\beta_1}{q_1}-\frac{n+1+\alpha_1}{p_1}.$
	Therefore, by this and the above argument, we conclude that $$c_1\leq n+1+b_1+\frac{n+1+\beta_1}{q_1}-\frac{n+1+\alpha_1}{p_1}.$$
	
	We now prove the inequality about $c_2$. Observe that
	\begin{align}\label{2x3}
		(n+1+b_2)-(n+1+\alpha_2)/p_2&=[n(p_2-1)+p_2(b_2+1)-(\alpha_2+1)]/p_2>0.
	\end{align}
	When $n+1+\beta_2-c_2q_2>0$, or equivalently,
	$c_2<\frac{n+1+\beta_2}{q_2},$ from Lemma \ref{asymptotic2}(i) and \eqref{2x3}, we then infer that
	\begin{align*}
		&(1-|\zeta|^2)^{(n+1+b_2)q_2-(n+1+\alpha_2)q_2/p_2}
		\int_{\mathbb{B}_n}\frac{(1-|w|^2)^{\beta_2}}
		{|1-\langle \zeta,w \rangle|^{c_2q_2}}\,dv(w)\\
		&\hs\ls (1-|\zeta|^2)^{(n+1+b_2)q_2-(n+1+\alpha_2)q_2/p_2}\leq C.
	\end{align*}
	When $n+1+\beta_2-c_2q_2=0$, or equivalently,
	$c_2=\frac{n+1+\beta_2}{q_2},$
	then, by Lemma \ref{asymptotic}(ii) and \eqref{2x3} again, we know that
	there exist a positive constant $R$ such that, for any $R<|\zeta|<1$,
	\begin{align*}
		&(1-|\zeta|^2)^{(n+1+b_2)q_2-(n+1+\alpha_2)q_2/p_2}
		\int_{\mathbb{B}_n}\frac{(1-|w|^2)^{\beta_2}}
		{|1-\langle \zeta,w \rangle|^{c_2q_2}}\,dv(w)\\
		&\hs\ls -(1-|\zeta|^2)^{(n+1+b_2)q_2-(n+1+\alpha_2)q_2/p_2}\log (1-|\zeta|^2)\leq C.
	\end{align*}
	For the case $|\zeta|\leq R$, it holds true that
	$$(1-|\zeta|^2)^{(n+1+b_2)q_2-(n+1+\alpha_2)q_2/p_2}
	\int_{\mathbb{B}_n}\frac{(1-|w|^2)^{\beta_2}}
	{|1-\langle \zeta,w \rangle|^{c_2q_2}}\,dv(w)\leq C.$$
	When $n+1+\beta_2-c_2q_2<0$, or equivalently,
	$c_2>\frac{n+1+\beta_2}{q_2},$ then, by Lemma \ref{asymptotic}(iii), we
	obtain that there exists a constant $R$ such that, for $R<|\zeta|<1$,
	\begin{align*}
		&(1-|\zeta|^2)^{(n+1+b_2)q_2-(n+1+\alpha_2)q_2/p_2}
		\int_{\mathbb{B}_n}\frac{(1-|w|^2)^{\beta_2}}
		{|1-\langle \zeta,w \rangle|^{c_2q_2}}\,dv(w)\\
		&\hs\sim (1-|\zeta|^2)^{(n+1+b_2)q_2-(n+1+\alpha_2)q_2/p_2+n+1+\beta_2-c_2q_2},
	\end{align*}
	which, together with \eqref{2e5} of Lemma \ref{test2}, further implies that
	$$
	\begin{cases}
		n+1+\beta_2-c_2q_2<0,\\
		(n+1+b_2)q_2-(n+1+\alpha_2)q_2/p_2+n+1+\beta_2-c_2q_2\geq 0.
	\end{cases}
	$$
	It is equivalent to
	$\frac{n+1+\beta_2}{q_2}<c_2\leq n+1+b_2+\frac{n+1+\beta_2}{q_2}-\frac{n+1+\alpha_2}{p_2}.$
	From this and the above argument, we infer that
	$$c_2\leq n+1+b_2+\frac{n+1+\beta_2}{q_2}-\frac{n+1+\alpha_2}{p_2},$$
	which completes the whole proof.
\end{proof}

\begin{lemma}\label{2l8}
	Let $1\le p_1\leq q_1<\infty$, $1=p_2\leq q_2<\infty$, $\alpha_1+1<p_1(b_1+1)$, and $\alpha_2=b_2$.
	If the operator $T_{0,\,\vb,\,\vc}$ is bounded from $\lv$ to $\lt$, then
	\begin{align*}
		\left\{
		\begin{aligned}
			&c_1\leq n+1+b_1+\frac{n+1+\beta_1}{q_1}-\frac{n+1+\alpha_1}{p_1},\\
			&c_2< \frac{n+1+\beta_2}{q_2}.
		\end{aligned}
		\right.
	\end{align*}
\end{lemma}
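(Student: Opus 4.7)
My plan is to split into two parts, following the roadmap flagged in the opening of Section \ref{s2}. The hypothesis $\alpha_2=b_2$ with $p_2=1$ makes the test function $f_{\xi,\zeta}$ of Lemma \ref{test2} fail to lie in $\lv$: its $w$-factor $(1-\langle w,\zeta\rangle)^{-(n+1+b_2)}$ has a logarithmic divergence in $L^1_{b_2}$ by Lemma \ref{asymptotic}(ii). I would repair this by introducing a perturbation parameter $\epsilon>0$ and working instead with
\begin{align*}
f^{\epsilon}_{\xi,\zeta}(z,w):=\frac{(1-|\xi|^2)^{n+1+b_1-(n+1+\alpha_1)/p_1}}{(1-\langle z,\xi\rangle)^{n+1+b_1}}\cdot\frac{(1-|\zeta|^2)^{\epsilon}}{(1-\langle w,\zeta\rangle)^{n+1+b_2}}.
\end{align*}
The factor $(1-|\zeta|^2)^{\epsilon}$ absorbs the log-divergence, so that $\|f^{\epsilon}_{\xi,\zeta}\|_{\lv}\lesssim 1$ uniformly in $(\xi,\zeta)\in\bn\times\bn$ with implicit constant depending only on $\epsilon$. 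Mimicking the reproducing-formula computation in the proof of Lemma \ref{test2} via Lemma \ref{reproduce}, I would then obtain
\begin{align*}
\overline{T_{0,\vb,\vc}f^{\epsilon}_{\xi,\zeta}(z,w)}=\frac{(1-|\xi|^2)^{n+1+b_1-(n+1+\alpha_1)/p_1}}{(1-\langle \xi,z\rangle)^{c_1}}\cdot\frac{(1-|\zeta|^2)^{\epsilon}}{(1-\langle\zeta,w\rangle)^{c_2}}.
\end{align*}
Specialising to $\zeta=0$ reduces the uniform $\lt$-bound on the left to the analog of \eqref{2e4}, at which point the $c_1$-half of the argument in the proof of Lemma \ref{2l7} carries over verbatim and yields the bound $c_1\le n+1+b_1+(n+1+\beta_1)/q_1-(n+1+\alpha_1)/p_1$.

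For the $c_2$ bound, I would first specialise the same family to $\xi=0$, which via the parallel reasoning produces $(1-|\zeta|^2)^{\epsilon q_2}\int_{\bn}(1-|w|^2)^{\beta_2}|1-\langle\zeta,w\rangle|^{-c_2q_2}\,dv(w)\lesssim 1$ uniformly in $\zeta$. Casewise application of Lemma \ref{asymptotic} then forces $c_2\le (n+1+\beta_2)/q_2+\epsilon$, and sending $\epsilon\to 0^+$ gives the non-strict bound $c_2\le (n+1+\beta_2)/q_2$.

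To upgrade to the strict inequality $c_2<(n+1+\beta_2)/q_2$, I would argue by contradiction: assume equality and construct a Dirac-type family $g_\zeta\in\lv$ whose image under $T_{0,\vb,\vc}$ grows faster than its $\lv$-norm. Writing $E(\zeta,r):=\{\eta\in\bn:|1-\langle\zeta,\eta\rangle|<r\}$ for the nonisotropic Carleson tube of radius $r$, I would take
\begin{align*}
g_\zeta(z,w):=\frac{\chi_{E(\zeta,1-|\zeta|^2)}(w)}{(1-|\zeta|^2)^{n+1+b_2}},
\end{align*}
independent of $z$. Standard scale estimates using $v(E(\zeta,r))\sim r^{n+1}$ together with $1-|w|^2\sim 1-|\zeta|^2$ on $E(\zeta,1-|\zeta|^2)$ give $\|g_\zeta\|_{\lv}\sim 1$ uniformly in $\zeta$. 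Meanwhile, on $w$ outside a slightly enlarged tube the kernel $(1-\langle w,\eta\rangle)^{-c_2}$ is essentially frozen over $\eta\in E(\zeta,1-|\zeta|^2)$, so that the $\eta$-integral in $T_{0,\vb,\vc}g_\zeta(z,w)$ is pointwise $\gtrsim|1-\langle w,\zeta\rangle|^{-c_2}$, while the $u$-integral reduces to a nonzero constant in $z$ by Lemma \ref{anlyticint}. Consequently
\begin{align*}
\|T_{0,\vb,\vc}g_\zeta\|_{\lt}^{q_2}\gtrsim \int_{\bn}\frac{(1-|w|^2)^{\beta_2}}{|1-\langle w,\zeta\rangle|^{c_2q_2}}\,dv(w)\sim \log\frac{1}{1-|\zeta|^2}
\end{align*}
by the critical case Lemma \ref{asymptotic}(ii), since $c_2q_2=n+1+\beta_2$; sending $|\zeta|\to 1^-$ produces the contradiction. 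The main obstacle in executing this plan is precisely this scale comparison for $g_\zeta$: one must justify $\|g_\zeta\|_{\lv}\sim 1$ and the freezing of the kernel simultaneously, so that the log-divergence of the critical Forelli-Rudin integral transfers faithfully to $\|T_{0,\vb,\vc}g_\zeta\|_{\lt}$. The first two steps, by contrast, are straightforward perturbative adaptations of the arguments of Lemmas \ref{test2} and \ref{2l7}.
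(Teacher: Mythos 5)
Your first two steps are sound and amount to a perturbative variant of what the paper does: the paper keeps the critical test function $f_{\xi,\zeta}(z,w)=\frac{(1-|\xi|^2)^{n+1+b_1-(n+1+\alpha_1)/p_1}}{(1-\langle z,\xi\rangle)^{n+1+b_1}}\frac{1}{(1-\langle w,\zeta\rangle)^{n+1+b_2}}$, accepts that $\|f_{\xi,\zeta}\|_{\lv}\sim\log\frac1{1-|\zeta|^2}$, and divides by this logarithm; comparing with $\|T_{0,\vb,\vc}f_{\xi,\zeta}\|_{\lt}$, which factors as a product of a $\xi$-integral and a $\zeta$-integral, it reads off simultaneously $n+1+\beta_2-c_2q_2\ge 0$ and the uniform bound \eqref{2e4}, after which the $c_1$-half of Lemma \ref{2l7} is repeated verbatim. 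Your $\epsilon$-regularisation reaches the same two conclusions and is correct (the $\epsilon$-dependence of the constants is harmless since the resulting inequality $c_2\le\frac{n+1+\beta_2}{q_2}+\epsilon$ is constant-free).

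The genuine issue is the strictness step, and it sits exactly where you flagged it. The paper avoids oscillation entirely: it passes to the adjoint (Lemma \ref{boundeddual}) and tests $T_{0,\vb,\vc}^*$ against the phase-corrected function $g_\zeta(z,w)=\frac{(1-\langle\zeta,w\rangle)^{(n+1+\beta_2)/q_2}}{|1-\langle\zeta,w\rangle|^{n+1+\beta_2}}$, whose numerator is engineered so that Lemma \ref{anlyticint} and Lemma \ref{asymptotic}(ii) give the clean asymptotics $\|T_{0,\vb,\vc}^*g_\zeta\|_{L^{\vp'}_{\vf}}\gs\log\frac1{1-|\zeta|^2}$ against $\|g_\zeta\|_{L^{\vq'}_{\vt}}\sim(\log\frac1{1-|\zeta|^2})^{1/q_2'}$, a contradiction. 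Your Carleson-tube function instead feeds an \emph{indicator} into the holomorphic kernel $(1-\langle w,\eta\rangle)^{-c_2}$, so the $\eta$-integral over $E(\zeta,r)$, $r=1-|\zeta|^2$, is a genuinely oscillatory integral: the tube has tangential radius $\sqrt r$, so the crude bound $|\langle w,\zeta-\eta\rangle|\le|\zeta-\eta|\le\sqrt{2r}$ can exceed $|1-\langle w,\zeta\rangle|$ on every dyadic annulus $|1-\langle w,\zeta\rangle|\sim 2^k r$ with $2^k\ls r^{-1/2}$, and for $c_2>1$ the argument of $(1-\langle w,\eta\rangle)^{-c_2}$ may then sweep out more than $\pi$, so "essentially frozen on a slightly enlarged tube" is not justified as stated and cancellation could in principle destroy the lower bound. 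The plan can be rescued: writing $\langle w,\zeta-\eta\rangle=\langle w-\zeta,\zeta-\eta\rangle+\langle\zeta,\zeta-\eta\rangle$ and using $|w-\zeta|^2\le 2|1-\langle w,\zeta\rangle|$, $|\langle\zeta,\zeta-\eta\rangle|\le 2r$ gives $|\langle w,\zeta-\eta\rangle|\ls\sqrt{r\,|1-\langle w,\zeta\rangle|}+r$, which \emph{is} $o(|1-\langle w,\zeta\rangle|)$ once $|1-\langle w,\zeta\rangle|\ge Cr$ with $C$ large, and then taking real parts of $\bigl(\frac{1-\langle w,\eta\rangle}{1-\langle w,\zeta\rangle}\bigr)^{-c_2}$ recovers the pointwise lower bound and the logarithmic divergence. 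But this anisotropic estimate is the whole content of the step, it appears nowhere in the paper's toolkit, and your proposal asserts rather than proves it; as written the proof of $c_2<\frac{n+1+\beta_2}{q_2}$ is incomplete.
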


\begin{proof}
	For any given $\xi,\,\zeta\in \mathbb{B}_n$, let
	\begin{align*}
		f_{\xi,\zeta}(z,w)&=\frac{(1-|\xi|^2)^{n+1+b_1-(n+1+\alpha_1)/p_1}}
		{(1-\langle z,\xi \rangle)^{n+1+b_1}}\frac{1}{(1-\langle w,\zeta \rangle)^{n+1+b_2}}.
	\end{align*}
	Notice that $n+1+\alpha_1-(n+1+b_1)p_1=n(1-p_1)+[\alpha_1+1-p_1(b_1+1)]<0$
	and $n+1+\alpha_2-(n+1+b_2)=0$.
	Then, by the proof of Lemma \ref{test2} and Lemma \ref{asymptotic}(ii), we find that as $|\zeta|\to 1^-$,
	\begin{align*}
		\|f_{\xi,\zeta}\|_{\lv}
		\sim \int_{\bn}\frac{(1-|w|^2)^{\alpha_2}}{|1-\langle w,\zeta \rangle|^{n+1+b_2}}\,dv(w)
		\sim \log \frac1{1-|\zeta|^2}
	\end{align*}
	and
	\begin{align*}
		\|T_{0,\,\vb,\,\vc}f_{\xi,\zeta}\|_{\lt}&=(1-|\xi|^2)^{(n+1+b_1)-(n+1+\alpha_1)/p_1}
		\left[\int_{\mathbb{B}_n}\frac{(1-|z|^2)^{\beta_1}}
		{|1-\langle \xi,z \rangle|^{c_1q_1}}\,dv(z)\right]^{1/q_1}\\
		&\hs\hs\times
		\left[\int_{\mathbb{B}_n}\frac{(1-|w|^2)^{\beta_2}}
		{|1-\langle \zeta,w \rangle|^{c_2q_2}}dv(w)\right]^{1/q_2}.
	\end{align*}
	From the assumption that $T_{0,\,\vb,\,\vc}$ is bounded from $\lv$ to $\lt$, we infer that
	there exists a positive constant $M$, independent of $\xi$ and $\zeta$, such that as $|\zeta|\to 1^-$,
	$\|T_{0,\,\vb,\,\vc} f_{\xi,\zeta}\|_{\lt}\le M\|f_{\xi,\zeta}\|_{\lv}$, namely,
	\begin{align*}
		&(1-|\xi|^2)^{(n+1+b_1)-(n+1+\alpha_1)/p_1}
		\left[\int_{\mathbb{B}_n}\frac{(1-|z|^2)^{\beta_1}}
		{|1-\langle \xi,z \rangle|^{c_1q_1}}\,dv(z)\right]^{1/q_1}
		\lf[\int_{\mathbb{B}_n}\frac{(1-|w|^2)^{\beta_2}}
		{|1-\langle \zeta,w \rangle|^{c_2q_2}}dv(w)\right]^{1/q_2}\\
		&\hs\ls M \log \frac1{1-|\zeta|^2}.
	\end{align*}
	By Lemma \ref{asymptotic}, we know that this happens if and only if $n+1+\beta_2-c_2q_2\ge 0$
	and there is a positive constant $C$, independent of $\xi$, such that
	\begin{align}\label{2eq1}
		(1-|\xi|^2)^{(n+1+b_1)-(n+1+\alpha_1)/p_1}
		\left[\int_{\mathbb{B}_n}\frac{(1-|z|^2)^{\beta_1}}
		{|1-\langle \xi,z \rangle|^{c_1q_1}}\,dv(z)\right]^{1/q_1}\le C.
	\end{align}
	We first prove the assertion about $c_1$ from \eqref{2eq1}. To do this, note that
	\begin{align}\label{2x4}
		(n+1+b_1)-(n+1+\alpha_1)/p_1&=[n(p_1-1)+p_1(b_1+1)-(\alpha_1+1)]/p_1>0.
	\end{align}
	If $n+1+\beta_1-c_1q_1>0$, or equivalently,
	$c_1<\frac{n+1+\beta_1}{q_1},$ then, by Lemma \ref{asymptotic2}(i) and \eqref{2x4}, we have
	\begin{align*}
		&(1-|\xi|^2)^{(n+1+b_1)q_1-(n+1+\alpha_1)q_1/p_1}
		\int_{\mathbb{B}_n}\frac{(1-|z|^2)^{\beta_1}}
		{|1-\langle \xi,z \rangle|^{c_1q_1}}\,dv(z)\\
		&\hs\ls (1-|\xi|^2)^{(n+1+b_1)q_1-(n+1+\alpha_1)q_1/p_1}
		\leq C.
	\end{align*}
	If $n+1+\beta_1-c_1q_1=0$, or equivalently,
	$c_1=\frac{n+1+\beta_1}{q_1},$
	then, from Lemma \ref{asymptotic}(ii), \eqref{2x4}, and the fact that
	$x^{\vaz}\log x \to 0 $ as $x \to 0$ for any $\vaz>0$ again, we deduce
	that there exists a positive constant $R$ such that for any $R<|\xi|<1$,
	\begin{align*}
		&(1-|\xi|^2)^{(n+1+b_1)q_1-(n+1+\alpha_1)q_1/p_1}
		\int_{\mathbb{B}_n}\frac{(1-|z|^2)^{\beta_1}}
		{|1-\langle \xi,z \rangle|^{c_1q_1}}\,dv(z)\\
		&\hs\ls -(1-|\xi|^2)^{(n+1+b_1)q_1-(n+1+\alpha_1)q_1/p_1}\log (1-|\xi|^2)
		\leq C.
	\end{align*}
	For $|\xi|\leq R$, we also have
	$$(1-|\xi|^2)^{(n+1+b_1)q_1-(n+1+\alpha_1)q_1/p_1}
	\int_{\mathbb{B}_n}\frac{(1-|z|^2)^{\beta_1}}
	{|1-\langle \xi,z \rangle|^{c_1q_1}}\,dv(z)\leq C.$$
	If $n+1+\beta_1-c_1q_1<0$, or equivalently,
	$c_1>\frac{n+1+\beta_1}{q_1},$ then, by Lemma \ref{asymptotic}(iii), 
	we know that there exists a constant $R$ such that, for any $R<|\xi|<1$,
	\begin{align*}
		&(1-|\xi|^2)^{(n+1+b_1)q_1-(n+1+\alpha_1)q_1/p_1}
		\int_{\mathbb{B}_n}\frac{(1-|z|^2)^{\beta_1}}
		{|1-\langle \xi,z \rangle|^{c_1q_1}}\,dv(z)\\
		&\hs\sim (1-|\xi|^2)^{(n+1+b_1)q_1-(n+1+\alpha_1)q_1/p_1+n+1+\beta_1-c_1q_1},
	\end{align*}
	which, combined with \eqref{2eq1}, implies that
	$$
	\begin{cases}
		n+1+\beta_1-c_1q_1<0,\\
		(n+1+b_1)q_1-(n+1+\alpha_1)q_1/p_1+n+1+\beta_1-c_1q_1\geq 0.
	\end{cases}
	$$
	It is equivalent to
	$\frac{n+1+\beta_1}{q_1}<c_1\leq n+1+b_1+\frac{n+1+\beta_1}{q_1}-\frac{n+1+\alpha_1}{p_1}.$
	This, together with the above argument, implies that $$c_1\leq n+1+b_1+\frac{n+1+\beta_1}{q_1}-\frac{n+1+\alpha_1}{p_1}.$$
	Thus, the assertion about $c_1$ is proved.
	
	We now show the inequality about $c_2$. To achieve this, we suppose, on the contrary, that $n+1+\beta_2-c_2q_2=0$
	and, for any given $\zeta\in \mathbb{B}_n$, let
	\begin{align*}
		g_{\zeta}(z,w)=\frac{(1-\langle \zeta,w \rangle)^{(n+1+\beta_2)/q_2}}
		{|1-\langle \zeta,w \rangle|^{n+1+\beta_2}}.
	\end{align*}
	Then, from Lemma \ref{anlyticint}, it follows that
	\begin{align*}
		T_{0,\,\vb,\,\vc}^*g_{\zeta}(z,w)
		&=(1-|z|^2)^{b_1-\az_1}\int_{\bn}\frac{(1-|u|^2)^{\bz_1}}
		{(1-\langle z,u \rangle)^{c_1}}\,dv(u)
		\int_{\mathbb{B}_n}\frac{(1-|\eta|^2)^{\beta_2}(1-\langle \zeta,\eta \rangle)^{(n+1+\beta_2)/q_2}}
		{(1-\langle w,\eta \rangle)^{c_2}|1-\langle \zeta,\eta \rangle|^{n+1+\beta_2}}\,dv(\eta)\\
		&\sim (1-|z|^2)^{b_1-\az_1}
		\int_{\mathbb{B}_n}\frac{(1-|\eta|^2)^{\beta_2}(1-\langle \zeta,\eta \rangle)^{(n+1+\beta_2)/q_2}}
		{(1-\langle w,\eta \rangle)^{c_2}|1-\langle \zeta,\eta \rangle|^{n+1+\beta_2}}\,dv(\eta).
	\end{align*}
	We next consider the following six cases.
	
	\emph{\textbf{Case 1)}} $1<p_1\le q_1<\fz$ and $1=p_2<q_2<\fz$. In this case, by Lemma \ref{asymptotic}(ii),
	we find that as $|\zeta|\to 1^-$,
	\begin{align*}
		\|g_{\zeta}\|_{L^{\vq'}_{\vec {\beta}}}
		\sim \lf[\int_{\bn}\frac{(1-|w|^2)^{\beta_2}}{|1-\langle \zeta,w \rangle|^{n+1+\beta_2}}\,dv(w)\r]^{1/q_2'}
		\sim \lf(\log \frac1{1-|\zeta|^2}\r)^{1/q_2'}
	\end{align*}
	and, applying the assumption $n+1+\beta_2-c_2q_2=0$ and Lemma \ref{asymptotic}(ii) again, we have
	\begin{align*}
		\|T_{0,\,\vb,\,\vc}^*g_{\zeta}\|_{L^{\vp'}_{\vec{\az}}}
		&\ge \|T_{0,\,\vb,\,\vc}^*g_{\zeta}\|_{L^{p_1'}_{\az_1}}(\zeta)\\
		&\sim \lf[\int_{\mathbb{B}_n}(1-|z|^2)^{(b_1-\az_1)p_1'+\az_1}\,dv(z)\r]^{1/p_1'}
		\int_{\mathbb{B}_n}\frac{(1-|\eta|^2)^{\beta_2}}
		{|1-\langle \zeta,\eta \rangle|^{n+1+\beta_2}}\,dv(\eta)
		\sim \log \frac1{1-|\zeta|^2}
	\end{align*}
	as $|\zeta|\to 1^-$. This, combined with Lemma \ref{boundeddual}, implies that
	the integral operator $T_{0,\,\vb,\,\vc}^*$ is bounded from $L_{\vt}^{\vq'}$ to $L^{\vp'}_{\vf}$.
	Namely, there exists a positive constant $L$, independent of $\zeta$, such that as $|\zeta|\to 1^-$,
	\begin{align*}
		\log \frac1{1-|\zeta|^2} \ls\|T_{0,\,\vb,\,\vc}^*g_{\zeta}\|_{L^{\vp'}_{\vec{\az}}}
		\le L\|g_{\zeta}\|_{L^{\vq'}_{\vec {\beta}}}
		\sim L \lf(\log \frac1{1-|\zeta|^2}\r)^{1/q_2'},
	\end{align*}
	which is impossible. Thus, $n+1+\beta_2-c_2q_2\neq 0$ and hence $c_2<\frac{n+1+\beta_2}{q_2}$ in this case.
	
	\emph{\textbf{Case 2)}} $1<p_1\le q_1<\fz$ and $1=p_2=q_2$. In this case, we have
	$\|g_{\zeta}\|_{L^{\vq'}_{\vec {\beta}}}\sim 1$
	and, from Case 1), it follows that as $|\zeta|\to 1^-$,
	$
	\|T_{0,\,\vb,\,\vc}^*g_{\zeta}\|_{L^{\vp'}_{\vec{\az}}}
	\gs \log \frac1{1-|\zeta|^2},
	$
	which, together with Lemma \ref{boundeddual}, implies that
	there exists a positive constant $L$, independent of $\zeta$, such that as $|\zeta|\to 1^-$,
	\begin{align*}
		\log \frac1{1-|\zeta|^2} \ls\|T_{0,\,\vb,\,\vc}^*g_{\zeta}\|_{L^{\vp'}_{\vec{\az}}}
		\le L\|g_{\zeta}\|_{L^{\vq'}_{\vec {\beta}}}
		\sim L,
	\end{align*}
	which is impossible. Thus, $c_2<\frac{n+1+\beta_2}{q_2}$ in this case.
	
	\emph{\textbf{Case 3)}} $1=p_1< q_1<\fz$ and $1=p_2<q_2<\fz$. In this case, by Lemma \ref{asymptotic}(ii),
	we find that as $|\zeta|\to 1^-$,
	\begin{align*}
		\|g_{\zeta}\|_{L^{\vq'}_{\vec {\beta}}}
		\sim \lf[\int_{\bn}\frac{(1-|w|^2)^{\beta_2}}{|1-\langle \zeta,w \rangle|^{n+1+\beta_2}}\,dv(w)\r]^{1/q_2'}
		\sim \lf(\log \frac1{1-|\zeta|^2}\r)^{1/q_2'}
	\end{align*}
	and, applying the assumption $n+1+\beta_2-c_2q_2=0$ and Lemma \ref{asymptotic}(ii) again, we have
	\begin{align*}
		\|T_{0,\,\vb,\,\vc}^*g_{\zeta}\|_{L^{\vp'}_{\vec{\az}}}
		\ge T_{0,\,\vb,\,\vc}^*g_{\zeta}(0,\zeta)
		\sim \int_{\mathbb{B}_n}\frac{(1-|\eta|^2)^{\beta_2}}
		{|1-\langle \zeta,\eta \rangle|^{n+1+\beta_2}}\,dv(\eta)\sim\log \frac1{1-|\zeta|^2}
	\end{align*}
	as $|\zeta|\to 1^-$. This, combined with Lemma \ref{boundeddual}, further implies that
	there exists a positive constant $L$, independent of $\zeta$, such that as $|\zeta|\to 1^-$,
	\begin{align*}
		\log \frac1{1-|\zeta|^2} \ls\|T_{0,\,\vb,\,\vc}^*g_{\zeta}\|_{L^{\vp'}_{\vec{\az}}}
		\le L\|g_{\zeta}\|_{L^{\vq'}_{\vec {\beta}}}
		\sim L \lf(\log \frac1{1-|\zeta|^2}\r)^{1/q_2'},
	\end{align*}
	which is impossible. Thus, $n+1+\beta_2-c_2q_2\neq 0$ and hence $c_2<\frac{n+1+\beta_2}{q_2}$ in this case.
	
	\emph{\textbf{Case 4)}} $1=p_1=q_1$ and $1=p_2<q_2<\fz$. In this case, by Lemma \ref{asymptotic}(ii),
	we find that as $|\zeta|\to 1^-$,
	\begin{align*}
		\|g_{\zeta}\|_{L^{\vq'}_{\vec {\beta}}}
		\sim \lf[\int_{\bn}\frac{(1-|w|^2)^{\beta_2}}{|1-\langle \zeta,w \rangle|^{n+1+\beta_2}}\,dv(w)\r]^{1/q_2'}
		\sim \lf(\log \frac1{1-|\zeta|^2}\r)^{1/q_2'}
	\end{align*}
	and, from Case 3), we infer that
	$
	\|T_{0,\,\vb,\,\vc}^*g_{\zeta}\|_{L^{\vp'}_{\vec{\az}}}\gs \log \frac1{1-|\zeta|^2}
	$
	as $|\zeta|\to 1^-$. This, together with Lemma \ref{boundeddual}, implies that
	there exists a positive constant $L$, independent of $\zeta$, such that as $|\zeta|\to 1^-$,
	\begin{align*}
		&\log \frac1{1-|\zeta|^2}
		\ls\|T_{0,\,\vb,\,\vc}^*g_{\zeta}\|_{L^{\vp'}_{\vec{\az}}}
		\le L\|g_{\zeta}\|_{L^{\vq'}_{\vec {\beta}}}
		\sim L \lf(\log \frac1{1-|\zeta|^2}\r)^{1/q_2'},
	\end{align*}
	which is impossible. Thus, $c_2<\frac{n+1+\beta_2}{q_2}$ in this case.
	
	\emph{\textbf{Case 5)}} $1=p_1<q_1<\fz$ and $1=p_2=q_2$. In this case, we have
	$\|g_{\zeta}\|_{L^{\vq'}_{\vec {\beta}}}\sim 1$
	and, by Case 3), we know that
	$
	\|T_{0,\,\vb,\,\vc}^*g_{\zeta}\|_{L^{\vp'}_{\vec{\az}}}
	\gs \log \frac1{1-|\zeta|^2}
	$
	as $|\zeta|\to 1^-$. From this and Lemma \ref{boundeddual}, it follows that
	there exists a positive constant $L$, independent of $\zeta$, such that as $|\zeta|\to 1^-$,
	\begin{align*}
		&\log \frac1{1-|\zeta|^2}
		\ls\|T_{0,\,\vb,\,\vc}^*g_{\zeta}\|_{L^{\vp'}_{\vec{\az}}}
		\le L\|g_{\zeta}\|_{L^{\vq'}_{\vec {\beta}}}
		\sim L,
	\end{align*}
	which is impossible. Thus, $c_2<\frac{n+1+\beta_2}{q_2}$ in this case.
	
	\emph{\textbf{Case 6)}} $1=p_1=q_1$ and $1=p_2=q_2$. In this case, note that
	$\|g_{\zeta}\|_{L^{\vq'}_{\vec {\beta}}}= 1$
	and $
	\|T_{0,\,\vb,\,\vc}^*g_{\zeta}\|_{L^{\vp'}_{\vec{\az}}}\gs \log \frac1{1-|\zeta|^2}
	$
	as $|\zeta|\to 1^-$. This, together with Lemma \ref{boundeddual}, implies that
	there exists a positive constant $L$, independent of $\zeta$, such that as $|\zeta|\to 1^-$,
	\begin{align*}
		&\log \frac1{1-|\zeta|^2}
		\ls\|T_{0,\,\vb,\,\vc}^*g_{\zeta}\|_{L^{\vp'}_{\vec{\az}}}
		\le L\|g_{\zeta}\|_{L^{\vq'}_{\vec {\beta}}}
		= L.
	\end{align*}
	This is impossible. Thus, $c_2<\frac{n+1+\beta_2}{q_2}$ in this case.
	Combining all these six cases, we complete the proof.
\end{proof}

\begin{lemma}\label{2l11}
	Let $1=p_1\leq q_1<\infty$, $1\le p_2\leq q_2<\infty$, $\alpha_1=b_1$, and $\alpha_2+1<p_2(b_2+1)$.
	If the operator $T_{0,\,\vb,\,\vc}$ is bounded from $\lv$ to $\lt$, then
	\begin{align*}
		\left\{
		\begin{aligned}
			&c_1< \frac{n+1+\beta_1}{q_1},\\
			&c_2\leq n+1+b_2+\frac{n+1+\beta_2}{q_2}-\frac{n+1+\alpha_2}{p_2}.
		\end{aligned}
		\right.
	\end{align*}
\end{lemma}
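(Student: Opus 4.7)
The plan is to establish the two inequalities separately, using arguments symmetric to those of Lemmas \ref{2l7} and \ref{2l8} with the roles of the two coordinates interchanged: here the logarithm arises on the $z$-side (because $\alpha_1=b_1$) rather than on the $w$-side, so the non-strict bound falls on $c_2$ and the strict bound falls on $c_1$.

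First, I would derive $c_2\leq n+1+b_2+(n+1+\beta_2)/q_2-(n+1+\alpha_2)/p_2$ together with the non-strict auxiliary bound $c_1\leq (n+1+\beta_1)/q_1$ from a single test function. For $\xi,\zeta\in\bn$, set
\begin{align*}
f_{\xi,\zeta}(z,w):=\frac{1}{(1-\langle z,\xi\rangle)^{n+1+b_1}}\cdot\frac{(1-|\zeta|^2)^{n+1+b_2-(n+1+\alpha_2)/p_2}}{(1-\langle w,\zeta\rangle)^{n+1+b_2}}.
\end{align*}
Since $\alpha_1=b_1$ and $p_1=1$, Lemma \ref{asymptotic}(ii) gives that the inner $z$-integral of $|f_{\xi,\zeta}|$ is $\sim\log\frac{1}{1-|\xi|^2}$; the outer $w$-integral is controlled by Lemma \ref{asymptotic2}(ii), which applies because $\alpha_2+1<p_2(b_2+1)$. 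Combining these gives $\|f_{\xi,\zeta}\|_{\lv}\ls\log\frac{1}{1-|\xi|^2}$. Applying Lemma \ref{reproduce} in each variable yields
\begin{align*}
\overline{T_{0,\vb,\vc}f_{\xi,\zeta}(z,w)}\sim\frac{1}{(1-\langle\xi,z\rangle)^{c_1}}\cdot\frac{(1-|\zeta|^2)^{n+1+b_2-(n+1+\alpha_2)/p_2}}{(1-\langle\zeta,w\rangle)^{c_2}},
\end{align*}
and the assumed boundedness then produces
\begin{align*}
&\left[\int_{\bn}\frac{(1-|z|^2)^{\beta_1}}{|1-\langle\xi,z\rangle|^{c_1q_1}}\,dv(z)\right]^{1/q_1}(1-|\zeta|^2)^{(n+1+b_2)-(n+1+\alpha_2)/p_2}\\
&\quad\times\left[\int_{\bn}\frac{(1-|w|^2)^{\beta_2}}{|1-\langle\zeta,w\rangle|^{c_2q_2}}\,dv(w)\right]^{1/q_2}\ls\log\frac{1}{1-|\xi|^2}.
\end{align*}
Fixing $\zeta=0$ (so the $w$-factor is $\sim 1$) and letting $|\xi|\to 1^-$, Lemma \ref{asymptotic} rules out $c_1>(n+1+\beta_1)/q_1$ (such a case would give a power blow-up incompatible with the logarithmic bound), hence $c_1\leq(n+1+\beta_1)/q_1$. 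Fixing instead $\xi=0$ (so the $z$-integral is $\sim 1$) reduces the inequality to the analogue of \eqref{2e5}, and the case-by-case application of Lemma \ref{asymptotic} from the proof of Lemma \ref{2l7} delivers $c_2\leq n+1+b_2+(n+1+\beta_2)/q_2-(n+1+\alpha_2)/p_2$.

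Next, I would upgrade $c_1\leq(n+1+\beta_1)/q_1$ to strict inequality, by contradiction, using the adjoint argument of Lemma \ref{2l8} with the coordinates swapped. Assume $c_1=(n+1+\beta_1)/q_1$, and for $\xi\in\bn$ define
\begin{align*}
g_\xi(z,w):=\frac{(1-\langle\xi,z\rangle)^{(n+1+\beta_1)/q_1}}{|1-\langle\xi,z\rangle|^{n+1+\beta_1}}.
\end{align*}
Since $\alpha_1=b_1$, the prefactor $(1-|z|^2)^{b_1-\alpha_1}$ in $T_{0,\vb,\vc}^*$ is $1$, and by Lemma \ref{anlyticint} the $\eta$-integral in $T_{0,\vb,\vc}^*g_\xi$ reduces to a positive constant independent of $w$. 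Evaluating at $z=\xi$ and using Lemma \ref{asymptotic}(ii) with $c_1q_1=n+1+\beta_1$ gives
\begin{align*}
|T_{0,\vb,\vc}^*g_\xi(\xi,w)|\gs(1-|w|^2)^{b_2-\alpha_2}\log\frac{1}{1-|\xi|^2}.
\end{align*}
The assumption $\alpha_2+1<p_2(b_2+1)$ is precisely equivalent to $(b_2-\alpha_2)p_2'+\alpha_2>-1$ (and reduces to $b_2>\alpha_2$ when $p_2=1$), which is exactly the integrability needed to conclude $\|T_{0,\vb,\vc}^*g_\xi\|_{L^{\vp'}_{\vf}}\gs\log\frac{1}{1-|\xi|^2}$. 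On the other hand, since $g_\xi$ is independent of $w$ and $|g_\xi(z)|=|1-\langle\xi,z\rangle|^{-(n+1+\beta_1)/q_1'}$, a direct computation using Lemma \ref{asymptotic}(ii) gives $\|g_\xi\|_{L^{\vq'}_{\vt}}\sim\bigl(\log\frac{1}{1-|\xi|^2}\bigr)^{1/q_1'}$ when $q_1>1$, and $\|g_\xi\|_{L^{\vq'}_{\vt}}\sim 1$ when $q_1=1$ (since then $|g_\xi|\equiv 1$). By Lemma \ref{boundeddual}, $T_{0,\vb,\vc}^*$ is bounded from $L^{\vq'}_{\vt}$ to $L^{\vp'}_{\vf}$, so comparing the two estimates produces $\log\frac{1}{1-|\xi|^2}\ls\bigl(\log\frac{1}{1-|\xi|^2}\bigr)^{1/q_1'}$ (or $\ls 1$), which is absurd as $|\xi|\to 1^-$ since $1/q_1'<1$. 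Hence $c_1<(n+1+\beta_1)/q_1$.

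The main technical obstacle I anticipate is the clean simultaneous handling of the endpoint cases $q_1=1$ and $p_2=1$ in Step 2: these require separately verifying that the mixed-norm estimates for $g_\xi$ and $T_{0,\vb,\vc}^*g_\xi$ remain valid under the $L^\infty$-type reformulation of the norm, paralleling the six-case split that appears in the proof of Lemma \ref{2l8}.
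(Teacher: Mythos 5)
Your proposal is correct and follows exactly the route the paper intends: the paper's own proof of Lemma \ref{2l11} consists only of the remark that it is the symmetric case of Lemma \ref{2l8}, with precisely the test functions $f_{\xi,\zeta}$ and $g_{\xi}$ that you use, and your two steps (the non-strict bounds extracted from $\|T_{0,\,\vb,\,\vc}f_{\xi,\zeta}\|_{\lt}\ls\|f_{\xi,\zeta}\|_{\lv}$, then the contradiction via $T_{0,\,\vb,\,\vc}^*g_{\xi}$ and Lemma \ref{boundeddual} to upgrade the $c_1$ bound to a strict inequality) reproduce that argument, including the correct verification that $\alpha_2+1<p_2(b_2+1)$ is exactly the integrability needed on the $w$-side of $\|T_{0,\,\vb,\,\vc}^*g_{\xi}\|_{L^{\vp'}_{\vf}}$. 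The only cosmetic point is that your displayed bound $\ls\log\frac{1}{1-|\xi|^2}$ is the asymptotic form valid as $|\xi|\to 1^-$; when you "fix $\xi=0$" you should instead invoke $\|f_{0,\zeta}\|_{\lv}\sim 1$ and boundedness directly, which is clearly what you intend.
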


\begin{proof}
	This lemma is a symmetric case of Lemma \ref{2l8}. Therefore, 
	its proof is similar to that of Lemma \ref{2l8} with 
	$$ f_{\xi,\zeta}(z,w)=\frac1
	{(1-\langle z,\xi \rangle)^{n+1+b_1}}\frac{(1-|\zeta|^2)^{n+1+b_2-(n+1+\alpha_2)/p_2}}{(1-\langle w,\zeta \rangle)^{n+1+b_2}}$$
	for any given $\xi,\,\zeta\in \mathbb{B}_n$, and
	\begin{align*}
		g_{\xi}(z,w)=\frac{(1-\langle \xi,z \rangle)^{(n+1+\beta_1)/q_1}}
		{|1-\langle \xi,z \rangle|^{n+1+\beta_1}}
	\end{align*}
	for any given $\xi\in \mathbb{B}_n$.
	Thus, we omit the proof.
\end{proof}

\begin{lemma}\label{2l12}
	Let $1=p_1\leq q_1<\infty$, $1=p_2\leq q_2<\infty$, $\alpha_1=b_1$, and $\alpha_2=b_2$.
	If the operator $T_{0,\,\vb,\,\vc}$ is bounded from $\lv$ to $\lt$, then
	\begin{align*}
		\left\{
		\begin{aligned}
			&c_1< \frac{n+1+\beta_1}{q_1},\\
			&c_2< \frac{n+1+\beta_2}{q_2}.
		\end{aligned}
		\right.
	\end{align*}
\end{lemma}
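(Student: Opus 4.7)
The plan is to establish Lemma~\ref{2l12} as the natural two-sided critical symmetric extension of Lemmas~\ref{2l8} and~\ref{2l11}, via a two-step strategy: first obtain the non-strict bounds $c_i\leq (n+1+\beta_i)/q_i$ by a forward test function, and then upgrade each to a strict inequality via a contradiction argument on the adjoint operator $T_{0,\vb,\vc}^*$.

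\emph{Step 1 (non-strict bound).} I would take the fully critical forward test function
\begin{align*}
f_{\xi,\zeta}(z,w):=\f{1}{(1-\la z,\xi\ra)^{n+1+b_1}}\cdot\f{1}{(1-\la w,\zeta\ra)^{n+1+b_2}}.
\end{align*}
Because $p_1=p_2=1$ and $\alpha_i=b_i$ for $i\in\{1,2\}$, the norm factors into the two critical integrals of Lemma~\ref{asymptotic}(ii), giving $\|f_{\xi,\zeta}\|_{\lv}\sim\log\f{1}{1-|\xi|^2}\log\f{1}{1-|\zeta|^2}$ as $|\xi|,|\zeta|\to 1^-$. Applying the reproducing formula of Lemma~\ref{reproduce} coordinatewise to $T_{0,\vb,\vc}f_{\xi,\zeta}$, just as in Lemma~\ref{2l8}, collapses the double integral and yields $|T_{0,\vb,\vc}f_{\xi,\zeta}(z,w)|\sim|1-\la\xi,z\ra|^{-c_1}|1-\la\zeta,w\ra|^{-c_2}$ up to a multiplicative constant. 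The boundedness assumption then forces a product inequality in $\xi,\zeta$, and Lemma~\ref{asymptotic} shows that whenever $c_iq_i>n+1+\beta_i$ the corresponding integral grows polynomially in $(1-|\cdot|^2)^{-1}$, which cannot be dominated by a logarithm. Hence $c_i\le(n+1+\beta_i)/q_i$ for both $i\in\{1,2\}$.

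\emph{Step 2 (ruling out equality).} Suppose, toward a contradiction, that $c_1=(n+1+\beta_1)/q_1$. Define the separable adjoint test function (independent of $w$)
\begin{align*}
g_\xi(z,w):=\f{(1-\la\xi,z\ra)^{c_1}}{|1-\la\xi,z\ra|^{n+1+\beta_1}}.
\end{align*}
Since this factorizes and $\int dv_{\beta_2}=1$, Lemma~\ref{asymptotic}(ii) yields $\|g_\xi\|_{L^{\vq'}_{\vt}}\sim(\log\f{1}{1-|\xi|^2})^{1/q_1'}$ when $q_1>1$ and $\|g_\xi\|_{L^{\vq'}_{\vt}}\sim 1$ when $q_1=1$. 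On the adjoint side, the identity $\alpha_i=b_i$ kills the prefactor in $T_{0,\vb,\vc}^*$; the $\eta$-integral collapses to a constant by Lemma~\ref{anlyticint}, and evaluating the $u$-integral at $z=\xi$ cancels the unimodular phase to give
\begin{align*}
|T_{0,\vb,\vc}^*g_\xi(\xi,w)|\gs\int_{\bn}\f{(1-|u|^2)^{\beta_1}}{|1-\la\xi,u\ra|^{n+1+\beta_1}}\,dv(u)\sim\log\f{1}{1-|\xi|^2}.
\end{align*}
Since $\vp=(1,1)$ forces $L^{\vp'}_{\vf}=L^{\fz}$, we conclude $\|T_{0,\vb,\vc}^*g_\xi\|_{L^{\vp'}_{\vf}}\gs\log\f{1}{1-|\xi|^2}$. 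Lemma~\ref{boundeddual} promotes the boundedness of $T_{0,\vb,\vc}$ into that of $T_{0,\vb,\vc}^*$ from $L^{\vq'}_{\vt}$ to $L^{\vp'}_{\vf}$, so $\log\f{1}{1-|\xi|^2}\ls\|g_\xi\|_{L^{\vq'}_{\vt}}$, i.e., $\log\f{1}{1-|\xi|^2}$ is bounded either by a constant (when $q_1=1$) or by $(\log\f{1}{1-|\xi|^2})^{1/q_1'}$ with $1/q_1'<1$. Both are impossible as $|\xi|\to 1^-$, producing the desired contradiction. The same argument with the roles of the two coordinates interchanged (using $g_\zeta(z,w):=(1-\la\zeta,w\ra)^{c_2}/|1-\la\zeta,w\ra|^{n+1+\beta_2}$) forces $c_2<(n+1+\beta_2)/q_2$.

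The delicate point I expect is ensuring that the separable test function $g_\xi$ achieves only a \emph{sublinear}-in-the-logarithm size in $L^{\vq'}_{\vt}$ while $T_{0,\vb,\vc}^*g_\xi$ still attains the full logarithmic lower bound. This asymmetry is driven precisely by the critical identities $\alpha_i=b_i$, which strip $T_{0,\vb,\vc}^*$ of its prefactor and render one of the iterated integrals a harmless constant by Lemma~\ref{anlyticint}; without this structural cancellation the contradiction would not close, so verifying that the cancellation is robust against the mixed-norm structure is the main work.
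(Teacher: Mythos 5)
Your proposal is correct and follows essentially the same route as the paper's proof: the same critical test function $f_{\xi,\zeta}$ gives the non-strict bounds via the $\log\times\log$ comparison, and the same adjoint test function (your $(1-\la\xi,z\ra)^{c_1}/|1-\la\xi,z\ra|^{n+1+\beta_1}$ coincides with the paper's $g_\xi$ under the equality assumption $c_1=(n+1+\beta_1)/q_1$) yields the contradiction between the full logarithm on the adjoint side and the $(\log)^{1/q_1'}$ (or constant) size of $\|g_\xi\|_{L^{\vq'}_{\vt}}$. The only cosmetic difference is that you compress the paper's four-case analysis over $q_1,q_2\in\{1\}\cup(1,\infty)$ into the observation that the second coordinate contributes only a harmless constant, which is a legitimate simplification.
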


\begin{proof}
	For any given $\xi,\,\zeta\in \mathbb{B}_n$, let
	\begin{align*}
		f_{\xi,\zeta}(z,w)=\frac1{(1-\langle z,\xi \rangle)^{n+1+b_1}}
		\frac1{(1-\langle w,\zeta \rangle)^{n+1+b_2}}.
	\end{align*}
	Notice that $n+1+\alpha_1-(n+1+b_1)=0$
	and $n+1+\alpha_2-(n+1+b_2)=0$.
	Then, by Lemma \ref{asymptotic}(ii), we conclude that as $|\xi|\to 1^-$ and $|\zeta|\to 1^-$,
	\begin{align*}
		\|f_{\xi,\zeta}\|_{\lv}
		\sim \int_{\bn}\frac{(1-|z|^2)^{\alpha_1}}{|1-\langle z,\xi \rangle|^{n+1+b_1}}\,dv(z)
		\int_{\bn}\frac{(1-|w|^2)^{\alpha_2}}{|1-\langle w,\zeta \rangle|^{n+1+b_2}}\,dv(w)
		\sim \log \frac1{1-|\xi|^2}\log \frac1{1-|\zeta|^2}
	\end{align*}
	and, via the proof of Lemma \ref{test2}, we find that
	\begin{align*}
		\|T_{0,\,\vb,\,\vc}f_{\xi,\zeta}\|_{\lt}=
		\left[\int_{\bn}\frac{(1-|z|^2)^{\beta_1}}
		{|1-\langle \xi,z \rangle|^{c_1q_1}}\,dv(z)\right]^{1/q_1}
		\left[\int_{\mathbb{B}_n}\frac{(1-|w|^2)^{\beta_2}}
		{|1-\langle \zeta,w \rangle|^{c_2q_2}}\,dv(w)\right]^{1/q_2}.
	\end{align*}
	Applying the assumption that $T_{0,\,\vb,\,\vc}$ is bounded from $\lv$ to $\lt$, we infer that
	there exists a positive constant $M$, independent of $\xi$ and $\zeta$, such that as $|\xi|\to 1^-$
	and $|\zeta|\to 1^-$,
	$\|T_{0,\,\vb,\,\vc} f_{\xi,\zeta}\|_{\lt}\le M\|f_{\xi,\zeta}\|_{\lv}$, namely,
	\begin{align*}
		\left[\int_{\mathbb{B}_n}\frac{(1-|z|^2)^{\beta_1}}
		{|1-\langle \xi,z \rangle|^{c_1q_1}}\,dv(z)\right]^{1/q_1}
		\lf[\int_{\mathbb{B}_n}\frac{(1-|w|^2)^{\beta_2}}
		{|1-\langle \zeta,w \rangle|^{c_2q_2}}\,dv(w)\right]^{1/q_2}
		\le M \log \frac1{1-|\xi|^2} \log \frac1{1-|\zeta|^2}.
	\end{align*}
	By Lemma \ref{asymptotic}, we know that this happens if and only if $n+1+\beta_1-c_1q_1\ge 0$
	and $n+1+\beta_2-c_2q_2\ge 0$.
	To complete the proof, it suffices to show $n+1+\beta_1-c_1q_1\neq 0$
	and $n+1+\beta_2-c_2q_2\neq 0$.
	
	Indeed, if we suppose $n+1+\beta_1-c_1q_1=0$, for any given $\xi\in \mathbb{B}_n$, let
	\begin{align*}
		g_{\xi}(z,w)=\frac{(1-\langle \xi,z \rangle)^{(n+1+\beta_1)/q_1}}
		{|1-\langle \xi,z \rangle|^{n+1+\beta_1}}.
	\end{align*}
	Then, from Lemma \ref{anlyticint}, it follows that
	\begin{align*}
		T_{0,\,\vb,\,\vc}^*g_{\xi}(z,w)
		=\int_{\bn}\frac{(1-|u|^2)^{\bz_1}(1-\langle \xi,u\rangle)^{(n+1+\beta_1)/q_1}}
		{(1-\langle z,u \rangle)^{c_1}|1-\langle \xi,u\rangle|^{n+1+\beta_1}}\,dv(u).
	\end{align*}
	We next consider the following four cases.
	
	\emph{\textbf{Case 1)}} $1=p_1< q_1<\fz$ and $1=p_2<q_2<\fz$. In this case, by Lemma \ref{asymptotic}(ii),
	we find that as $|\xi|\to 1^-$,
	\begin{align*}
		\|g_{\xi}\|_{L^{\vq'}_{\vec {\beta}}}
		\sim \lf[\int_{\bn}\frac{(1-|z|^2)^{\beta_1}}{|1-\langle \xi,z \rangle|^{n+1+\beta_1}}\,dv(z)\r]^{1/q_1'}
		\sim \lf(\log \frac1{1-|\xi|^2}\r)^{1/q_1'}
	\end{align*}
	and, applying the assumption $n+1+\beta_1-c_1q_1=0$ and Lemma \ref{asymptotic}(ii) again, we have
	\begin{align*}
		\|T_{0,\,\vb,\,\vc}^*g_{\xi}\|_{L^{\vp'}_{\vec{\az}}}
		\ge T_{0,\,\vb,\,\vc}^*g_{\xi}(\xi,0)
		\sim \int_{\mathbb{B}_n}\frac{(1-|u|^2)^{\beta_1}}
		{|1-\langle \xi,u \rangle|^{n+1+\beta_1}}\,dv(u)\sim\log \frac1{1-|\xi|^2}
	\end{align*}
	as $|\xi|\to 1^-$. This, combined with Lemma \ref{boundeddual}, further implies that
	there exists a positive constant $L$, independent of $\xi$, such that as $|\xi|\to 1^-$,
	\begin{align*}
		\log \frac1{1-|\xi|^2} \ls\|T_{0,\,\vb,\,\vc}^*g_{\xi}\|_{L^{\vp'}_{\vec{\az}}}
		\le L\|g_{\xi}\|_{L^{\vq'}_{\vec {\beta}}}
		\sim L \lf(\log \frac1{1-|\xi|^2}\r)^{1/q_1'},
	\end{align*}
	which is impossible. Thus, $n+1+\beta_1-c_1q_1\neq 0$ and hence $c_1<\frac{n+1+\beta_1}{q_1}$ in this case.
	
	\emph{\textbf{Case 2)}} $1=p_1<q_1<\fz$ and $1=p_2=q_2$. In this case, by Lemma \ref{asymptotic}(ii),
	we find that as $|\xi|\to 1^-$,
	\begin{align*}
		\|g_{\xi}\|_{L^{\vq'}_{\vec {\beta}}}
		=\lf[\int_{\bn}\frac{(1-|z|^2)^{\beta_1}}{|1-\langle \xi,z \rangle|^{n+1+\beta_1}}\,dv(z)\r]^{1/q_1'}
		\sim \lf(\log \frac1{1-|\xi|^2}\r)^{1/q_1'}
	\end{align*}
	and, from Case 1), we infer that $
	\|T_{0,\,\vb,\,\vc}^*g_{\xi}\|_{L^{\vp'}_{\vec{\az}}}\gs \log \frac1{1-|\xi|^2}
	$
	as $|\xi|\to 1^-$. This, together with Lemma \ref{boundeddual}, implies that
	there exists a positive constant $L$, independent of $\xi$, such that as $|\xi|\to 1^-$,
	\begin{align*}
		&\log \frac1{1-|\xi|^2}
		\ls\|T_{0,\,\vb,\,\vc}^*g_{\xi}\|_{L^{\vp'}_{\vec{\az}}}
		\le L\|g_{\xi}\|_{L^{\vq'}_{\vec {\beta}}}
		\sim L \lf(\log \frac1{1-|\xi|^2}\r)^{1/q_1'},
	\end{align*}
	which is impossible. Thus, $c_1<\frac{n+1+\beta_1}{q_1}$ in this case.
	
	\emph{\textbf{Case 3)}} $1=p_1=q_1$ and $1=p_2<q_2<\fz$. In this case, we have
	$\|g_{\xi}\|_{L^{\vq'}_{\vec {\beta}}}\sim 1$
	and, by Case 1), we know that
	$
	\|T_{0,\,\vb,\,\vc}^*g_{\xi}\|_{L^{\vp'}_{\vec{\az}}}
	\gs \log \frac1{1-|\xi|^2}
	$
	as $|\xi|\to 1^-$. From this and Lemma \ref{boundeddual}, it follows that
	there exists a positive constant $L$, independent of $\xi$, such that as $|\xi|\to 1^-$,
	$
	\log \frac1{1-|\xi|^2}
	\ls\|T_{0,\,\vb,\,\vc}^*g_{\xi}\|_{L^{\vp'}_{\vec{\az}}}
	\le L\|g_{\xi}\|_{L^{\vq'}_{\vec {\beta}}}
	\sim L,
	$
	which is impossible. Thus, $c_1<\frac{n+1+\beta_1}{q_1}$ in this case.
	
	\emph{\textbf{Case 4)}} $1=p_1=q_1$ and $1=p_2=q_2$. In this case, note that
	$\|g_{\xi}\|_{L^{\vq'}_{\vec {\beta}}}=1$
	and
	$
	\|T_{0,\,\vb,\,\vc}^*g_{\xi}\|_{L^{\vp'}_{\vec{\az}}}\gs \log \frac1{1-|\xi|^2}
	$
	as $|\xi|\to 1^-$. This, together with Lemma \ref{boundeddual}, implies that
	there exists a positive constant $L$, independent of $\xi$, such that as $|\xi|\to 1^-$,
	$
	\log \frac1{1-|\xi|^2}
	\ls\|T_{0,\,\vb,\,\vc}^*g_{\xi}\|_{L^{\vp'}_{\vec{\az}}}
	\le L\|g_{\xi}\|_{L^{\vq'}_{\vec {\beta}}}
	= L.$
	This is impossible. Thus, $c_1<\frac{n+1+\beta_1}{q_1}$ in this case.
	Combining these four cases, assertion about $c_1$ is proved.
	
	Similarly, for any given $\zeta\in \mathbb{B}_n$, let
	$
	g_{\zeta}(z,w)=\frac{(1-\langle \zeta,w \rangle)^{(n+1+\beta_2)/q_2}}
	{|1-\langle \zeta,w \rangle|^{n+1+\beta_2}}.
	$
	Then, by an argument similar to the proof of Lemma \ref{2l8},
	we also conclude that $c_2<\frac{n+1+\beta_2}{q_2}.$
	Thus, the proof is complete.
\end{proof}

\section{Proof of sufficiency for boundedness of $S_{\negthinspace 0,\,\vb,\,\vc}$ \label{s1}}

This section aims to show the sufficiency of the boundedness of
multiparameter Forelli-Rudin type operators $T_{\va,\,\vb,\,\vc}$ and
$S_{\negthinspace \va,\,\vb,\,\vc}$. To achieve this, we first need to extend
Schur's test to the weighted mixed-norm Lebesgue space as follows,
which generalizes the result of \cite[Theorem 1]{z15}.

\begin{proposition}\label{1p1}
Let $\vec{\mu}:=\mu_1\times\mu_2$ and $\vec{\nu}:=\nu_1\times\nu_2$ be positive measures on the
space $X\times X$ and, for $i\in\{1,2\}$, $K_i$ be nonnegative functions on $X\times X$.
Let $T$ be an integral operator with kernel $K:=K_1\cdot K_2$ defined
by setting for any $(x,y)\in X\times X$,
$$Tf(x,y):=\int_{X}\int_{X}K_1(x,s)K_2(y,t)f(s,t)\,d\mu_1(s)\,d\mu_2(t).$$
Suppose $\vp:=(p_1,p_2),\ \vq:=(q_1,q_2)\in(1,\fz)^2$ satisfying
$1<p_-\le p_+\le q_-<\fz$, where $p_+:=\max\{p_1,p_2\}$, $p_-:=\min\{p_1,p_2\}$,
and $q_-:=\min\{q_1,q_2\}$. Let $\gamma_i$ and $\delta_i$ be
real numbers such that $\gamma_i+\delta_i=1$ for $i\in\{1,2\}$.
If there exist two positive functions $h_1$ and $h_2$ defined
on $X\times X$ with two positive constants $C_1$ and $C_2$
such that for almost all $(x,y)\in X\times X$,
\begin{equation}\label{1e1}
\int_{X} \lf[\int_{X} [K_1(x,s)]^{\gamma_1 p_1'}[K_2(y,t)]^{\gamma_2 p_1'}[h_1(s,t)]^{p_1'}\,d\mu_1(s)\r]^{p_2'/p_1'}\,d\mu_2(t)
\le C_1[h_2(x,y)]^{p_2'}
\end{equation}
and for almost all $(s,t)\in X\times X$,
\begin{equation}\label{1e2}
\int_{X} \lf[\int_{X} [K_1(x,s)]^{\delta_1 q_1}[K_2(y,t)]^{\delta_2 q_1}[h_2(x,y)]^{q_1}\,d\nu_1(x)\r]^{q_2/q_1}\,d\nu_2(y)
\le C_2[h_1(s,t)]^{q_2},
\end{equation}
then
$T: L^{\vp}_{\vec{\mu}}\to L^{\vq}_{\vec{\nu}}$
is bounded with
$\|T\|_{L^{\vp}_{\vec{\mu}}\to L^{\vq}_{\vec{\nu}}}\le C_1^{1/p_2'}C_2^{1/q_2}.$
\end{proposition}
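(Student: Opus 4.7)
The plan is to lift the classical $L^p\to L^q$ Schur test argument (in the style of the single-variable result invoked in the preamble of this section) to the mixed-norm setting by replacing the scalar H\"older and Minkowski integral inequalities with their mixed-norm counterparts. The three key ingredients are a pointwise factorization of the kernel using $\gamma_i+\delta_i=1$, the mixed-norm H\"older inequality (as in \cite[Remark~2.8(iv)]{hlyy}, already used in the proof of Lemma \ref{boundeddual}), and an iterated Minkowski integral inequality whose applicability is guaranteed by the hypothesis $p_+\le q_-$.

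First I would split the integrand as
\begin{align*}
K_1(x,s) K_2(y,t) |f(s,t)|=\bigl[K_1(x,s)^{\gamma_1} K_2(y,t)^{\gamma_2} h_1(s,t)\bigr]\cdot\bigl[K_1(x,s)^{\delta_1} K_2(y,t)^{\delta_2} h_1(s,t)^{-1}|f(s,t)|\bigr]
\end{align*}
and integrate against $d\mu_1(s)\,d\mu_2(t)$. The mixed-norm H\"older inequality with exponents $\vec p$ and $\vec p'$ then produces
$$|Tf(x,y)|\le \bigl\|K_1(x,\cdot)^{\gamma_1} K_2(y,\cdot)^{\gamma_2} h_1\bigr\|_{L^{\vec p'}_{\vec\mu}}\bigl\|K_1(x,\cdot)^{\delta_1} K_2(y,\cdot)^{\delta_2} h_1^{-1} f\bigr\|_{L^{\vec p}_{\vec\mu}}.$$
Hypothesis \eqref{1e1} controls the first factor by $C_1^{1/p_2'} h_2(x,y)$. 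Absorbing $h_2(x,y)$ into the surviving norm yields $|Tf(x,y)|\le C_1^{1/p_2'}\bigl\|F(x,y,\cdot,\cdot)\bigr\|_{L^{\vec p}_{\vec\mu}}$, where $F(x,y,s,t):=K_1(x,s)^{\delta_1} K_2(y,t)^{\delta_2} h_2(x,y) h_1(s,t)^{-1} f(s,t)$.

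Next I would take the $L^{\vec q}_{\vec \nu}$ norm in $(x,y)$ of this pointwise estimate and interchange it with the inner $L^{\vec p}_{\vec\mu}$ norm in $(s,t)$ via a cascade of four successive Minkowski integral inequality applications, each moving one $L^{p_i}$ past one $L^{q_j}$. The assumption $p_+\le q_-$ ensures $p_i\le q_j$ for every $i,j\in\{1,2\}$, so each swap is legal, producing
$$\|Tf\|_{L^{\vec q}_{\vec\nu}}\le C_1^{1/p_2'}\bigl\|\bigl\|F(\cdot,\cdot,s,t)\bigr\|_{L^{\vec q}_{\vec\nu}}\bigr\|_{L^{\vec p}_{\vec\mu}}.$$
For fixed $(s,t)$ I would pull the $(x,y)$-independent factor $h_1(s,t)^{-1} f(s,t)$ out of the inner norm and invoke hypothesis \eqref{1e2} to bound the remaining $L^{\vec q}_{\vec\nu}$ norm of $K_1(\cdot,s)^{\delta_1} K_2(\cdot,t)^{\delta_2} h_2$ by $C_2^{1/q_2} h_1(s,t)$. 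The two $h_1$ factors cancel, and the outer $L^{\vec p}_{\vec\mu}$ norm then reproduces $\|f\|_{L^{\vec p}_{\vec\mu}}$, yielding the claimed bound with constant $C_1^{1/p_2'} C_2^{1/q_2}$.

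The main obstacle is the cascade of Minkowski swaps in the second step: while each individual swap is standard, iterating four of them while tracking the four-variable mixed norms and ensuring that each intermediate expression is well-defined requires care. This is also precisely where the global hypothesis $p_+\le q_-$ is essential, since the swaps pair $p_i$ with $q_j$ whose indices need not coincide, so a mere componentwise condition $p_i\le q_i$ would not suffice. The strict inequality $p_->1$ is used in the first step so that the H\"older dual exponents $p_i'$ are finite, while $q_-<\infty$ keeps the target norm meaningful throughout.
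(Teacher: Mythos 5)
Your proposal is correct and follows essentially the same route as the paper's proof: the same factorization of the kernel via $\gamma_i+\delta_i=1$, the same (iterated, i.e.\ mixed-norm) H\"older step producing the $L^{\vec p'}_{\vec\mu}$ factor controlled by \eqref{1e1}, the same Minkowski interchange of the $L^{\vec q}_{\vec\nu}$ and $L^{\vec p}_{\vec\mu}$ norms justified by $p_+\le q_-$, and the same final application of \eqref{1e2}, yielding the identical constant $C_1^{1/p_2'}C_2^{1/q_2}$. The only difference is presentational: the paper carries out the two H\"older applications and the Minkowski interchange in explicit iterated-integral displays rather than as abstract norm manipulations.
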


\begin{proof}
If $f\in L^{\vp}_{\vec{\mu}}$, then for almost every $(x,y)\in X\times X$, we have
$$|Tf(x,y)|\le \int_{X}\int_{X}K_1(x,s)K_2(y,t)[h_1(s,t)]^{-1}
h_1(s,t)|f(s,t)|\,d\mu_1(s)\,d\mu_2(t).$$
By using the H\"{o}lder inequality, we have
\begin{align*}
|Tf(x,y)|&\le\int_{X}\lf[\int_{X}[K_1(x,s)]^{\gamma_1 p_1'}[K_2(y,t)]^{\gamma_2 p_1'}
[h_1(s,t)]^{p_1'}\,d\mu_1(s)\r]^{1/p_1'}\\
&\hs\times\lf[\int_{X}[K_1(x,s)]^{\delta_1 p_1}[K_2(y,t)]^{\delta_2 p_1}
[h_1(s,t)]^{-p_1}|f(s,t)|^{p_1}\,d\mu_1(s)\r]^{1/p_1}\,d\mu_2(t)\\
&\le\lf\{\int_{X}\lf[\int_{X}[K_1(x,s)]^{\gamma_1 p_1'}[K_2(y,t)]^{\gamma_2 p_1'}
[h_1(s,t)]^{p_1'}\,d\mu_1(s)\r]^{p_2'/p_1'}\,d\mu_2(t)\r\}^{1/p_2'}\\
&\hs\times\lf\{\int_{X}\lf[\int_{X}[K_1(x,s)]^{\delta_1 p_1}[K_2(y,t)]^{\delta_2 p_1}
[h_1(s,t)]^{-p_1}|f(s,t)|^{p_1}\,d\mu_1(s)\r]^{p_2/p_1}\,d\mu_2(t)\r\}^{1/p_2},
\end{align*}
which, together with \eqref{1e1}, further implies that
\begin{align*}
&|Tf(x,y)|\\
&\hs\le C_1^{1/p_2'}h_2(x,y)\lf\{\int_{X}\lf[\int_{X}
[K_1(x,s)]^{\delta_1 p_1}[K_2(y,t)]^{\delta_2 p_1}[h_1(s,t)]^{-p_1}|f(s,t)|^{p_1}\,d\mu_1(s)\r]
^{p_2/p_1}\,d\mu_2(t)\r\}^{1/p_2}.
\end{align*}
In addition, from the assumption $1<p_-\le p_+\le q_-<\fz$,
it follows that $q_1\ge p_2$, $q_1\ge p_1$, $q_2\ge p_2$
and $q_2\ge p_1$. Thus, combining the above inequality and
the Minkowski inequality, we conclude that
\begin{align*}
&\|Tf\|_{L^{\vq}_{\vec{\nu}}}\\
&\hs\le C_1^{1/p_2'}\lf\{\int_{X}\lf[\int_{X}\lf(\int_{X}
\lf[\int_{X}[K_1(x,s)]^{\delta_1 q_1}[K_2(y,t)]^{\delta_2 q_1}
[h_2(x,y)]^{q_1}|f(s,t)|^{q_1}[h_1(s,t)]^{-q_1}\r.\r.\r.\r.\\
&\hs\hs\lf.\lf.\lf.\times\,d\nu_1(x)\Bigg]^{\frac{q_2}{q_1}}
\,d\nu_2(y)\r)^{\frac{p_1}{q_2}}\,d\mu_1(s)\r]^{\frac{p_2}{p_1}}\,d\mu_2(t)
\r\}^{\frac1{p_2}}\\
&\hs= C_1^{1/p_2'}\lf\{\int_{X}\lf[\int_{X}
|f(s,t)|^{p_1}[h_1(s,t)]^{-p_1}\lf(\int_{X}
\lf[\int_{X}[K_1(x,s)]^{\delta_1 q_1}[K_2(y,t)]^{\delta_2 q_1}
[h_2(x,y)]^{q_1}\r.\r.\r.\r.\\
&\hs\hs\lf.\lf.\lf.\times\,d\nu_1(x)\Bigg]^{\frac{q_2}{q_1}}
\,d\nu_2(y)\r)^{\frac{p_1}{q_2}}\,d\mu_1(s)\r]^{\frac{p_2}{p_1}}\,d\mu_2(t)
\r\}^{\frac1{p_2}}.
\end{align*}
Now applying \eqref{1e2}, we obtain
\begin{align*}
\|Tf\|_{L^{\vq}_{\vec{\nu}}}
&\le C_1^{1/p_2'}C_2^{1/q_2}\lf\{\int_{X}\lf[\int_{X}
|f(s,t)|^{p_1}\,d\mu_1(s)\r]^{\frac{p_2}{p_1}}\,d\mu_2(t)
\r\}^{\frac1{p_2}}
=C_1^{1/p_2'}C_2^{1/q_2} \|f\|_{L^{\vp}_{\vec{\mu}}}.
\end{align*}
This finishes the proof of Proposition \ref{1p1}.
\end{proof}

We also obtain the following three Schur's tests for the endpoint cases
$\vp:=(p_1,p_2)\equiv(1,1)=:\vec 1$, $1=p_1<p_2<\fz$ and $1=p_2<p_1<\fz$,
which are extensions of \cite[Theorem 2]{z15}.

\begin{proposition}\label{1p2}
Let $\vec{\mu}$, $\vec{\nu}$, the kernel $K$, and the operator $T$
be as in Proposition \ref{1p1}. Suppose $\vq:=(q_1,q_2)\in[1,\fz)^2$.
Let $\gamma_i$ and $\delta_i$ be two
real numbers such that $\gamma_i+\delta_i=1$ for $i\in\{1,2\}$.
If there exist two positive functions $h_1$ and $h_2$ defined
on $X\times X$ with two positive constants $C_1$ and $C_2$
such that for almost all $(x,y)\in X\times X$,
\begin{equation}\label{1e3}
\mathop{\esssup}\limits_{(s,t)\in X\times X}[K_1(x,s)]^{\gamma_1}[K_2(y,t)]^{\gamma_2}
h_1(s,t)\le C_1h_2(x,y)
\end{equation}
and, for almost all $(s,t)\in X\times X$,
\begin{equation}\label{1e4}
\int_{X} \lf[\int_{X} [K_1(x,s)]^{\delta_1 q_1}[K_2(y,t)]^{\delta_2 q_1}
[h_2(x,y)]^{q_1}\,d\nu_1(x)\r]^{q_2/q_1}\,d\nu_2(y)
\le C_2[h_1(s,t)]^{q_2},
\end{equation}
then
$T: L^{\vec 1}_{\vec{\mu}}\to L^{\vq}_{\vec{\nu}}$
is bounded with
$
\|T\|_{L^{\vec 1}_{\vec{\mu}}\to L^{\vq}_{\vec{\nu}}}\le C_1C_2^{1/q_2}.
$
\end{proposition}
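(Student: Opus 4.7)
The plan is to mimic the proof of Proposition \ref{1p1}, but to accommodate the endpoint $\vec p = \vec 1$ (where the conjugate exponents $p_1' = p_2' = \infty$ make the first H\"older step in that argument degenerate) by replacing it with the essential-supremum hypothesis \eqref{1e3}. The second H\"older step is then replaced by Minkowski's integral inequality for mixed-norm spaces, which is valid because $q_1,q_2\ge1$.

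First, for any $f\in L^{\vec 1}_{\vec\mu}$ and almost every $(x,y)\in X\times X$, I would use $\gamma_i+\delta_i=1$ to factor
\begin{align*}
K_1(x,s)K_2(y,t) = \bigl[K_1(x,s)^{\gamma_1}K_2(y,t)^{\gamma_2}h_1(s,t)\bigr]\cdot \bigl[K_1(x,s)^{\delta_1}K_2(y,t)^{\delta_2}h_1(s,t)^{-1}\bigr],
\end{align*}
and then apply \eqref{1e3} to the first bracket (which is bounded by $C_1 h_2(x,y)$ for a.e.\ $(s,t)$, uniformly in the integration variables). Pulling $C_1 h_2(x,y)$ out of the $d\mu_1\,d\mu_2$ integral yields the pointwise bound
\begin{align*}
|Tf(x,y)|\le C_1\, h_2(x,y)\int_X\int_X K_1(x,s)^{\delta_1}K_2(y,t)^{\delta_2}h_1(s,t)^{-1}|f(s,t)|\,d\mu_1(s)\,d\mu_2(t).
\end{align*}

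Next, I would take the $L^{\vq}_{\vec\nu}$ norm of both sides and apply the Minkowski integral inequality in the mixed-norm setting to interchange the $\|\cdot\|_{L^{\vq}_{\vec\nu}}$ norm with the $d\mu_1\,d\mu_2$ integration. This is legitimate because $q_1\ge1$ and $q_2\ge1$, and the result is
\begin{align*}
\|Tf\|_{L^{\vq}_{\vec\nu}}\le C_1\int_X\int_X |f(s,t)|\,h_1(s,t)^{-1}\, \bigl\|h_2(\cdot,\cdot)\,K_1(\cdot,s)^{\delta_1}K_2(\cdot,t)^{\delta_2}\bigr\|_{L^{\vq}_{\vec\nu}}\,d\mu_1(s)\,d\mu_2(t).
\end{align*}
Finally, I would invoke \eqref{1e4} to control the inner mixed norm by $C_2^{1/q_2}h_1(s,t)$; the factors $h_1(s,t)^{-1}$ and $h_1(s,t)$ cancel, leaving exactly
\begin{align*}
\|Tf\|_{L^{\vq}_{\vec\nu}}\le C_1 C_2^{1/q_2}\int_X\int_X |f(s,t)|\,d\mu_1(s)\,d\mu_2(t)=C_1 C_2^{1/q_2}\|f\|_{L^{\vec 1}_{\vec\mu}}.
\end{align*}

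The only real obstacle is the step where Minkowski's integral inequality is applied to a mixed-norm space. This is not a deep fact (it follows from iterating the one-variable Minkowski inequality in $\nu_1$ and then in $\nu_2$, both times using an exponent $\ge 1$), and the paper already relies on such mixed-norm inequalities in the proof of Proposition \ref{1p1} via the generalized H\"older bound, so it should go through without complication.
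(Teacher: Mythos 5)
Your proposal is correct and follows essentially the same route as the paper's proof: the same factorization of the kernel via $\gamma_i+\delta_i=1$, the same use of \eqref{1e3} to extract $C_1h_2(x,y)$ pointwise, then Minkowski's integral inequality to pass the $L^{\vq}_{\vec\nu}$ norm inside the $d\mu_1\,d\mu_2$ integral, and finally \eqref{1e4}. The only cosmetic difference is that the paper treats the case $q_i=1$ separately via Fubini, whereas you correctly note that the mixed-norm Minkowski inequality already covers all $q_1,q_2\ge 1$ uniformly.
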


\begin{proof}
Let $f\in L^{\vec 1}_{\vec{\mu}}$.
Then, by \eqref{1e3}, we find that, for almost every
$(x,y)\in X\times X$,
\begin{align*}
|Tf(x,y)|&\le \int_{X}\int_{X}K_1(x,s)K_2(y,t)[h_1(s,t)]^{-1}
h_1(s,t)|f(s,t)|\,d\mu_1(s)\,d\mu_2(t)\\
&\le \mathop{\esssup}\limits_{(s,t)\in X\times X}[K_1(x,s)]^{\gamma_1}[K_2(y,t)]^{\gamma_2}
h_1(s,t)\\
&\hs\times \int_{X}\int_{X}[K_1(x,s)]^{\delta_1}[K_2(y,t)]^{\delta_2}
[h_1(s,t)]^{-1}|f(s,t)|\,d\mu_1(s)\,d\mu_2(t)\\
&\le C_1h_2(x,y)\int_{X}\int_{X}[K_1(x,s)]^{\delta_1}[K_2(y,t)]^{\delta_2}
[h_1(s,t)]^{-1}|f(s,t)|\,d\mu_1(s)\,d\mu_2(t).
\end{align*}
Now we first show Proposition \ref{1p2} for the case
$\vq\in(1,\fz)^2$. In this case, applying the Minkowski
inequality, we deduce that
\begin{align*}
&\|Tf\|_{L^{\vq}_{\vec{\nu}}}\\
&\hs \le C_1\int_{X}\int_{X}\lf(\int_{X}\lf[\int_{X}
[K_1(x,s)]^{\delta_1 q_1}[K_2(y,t)]^{\delta_2 q_1}[h_2(x,y)]^{q_1}
[h_1(s,t)]^{-q_1}|f(s,t)|^{q_1}\r.\r.\\
&\hs\hs\lf.\times\,d\nu_1(x)\Bigg]^{\frac{q_2}{q_1}}\,d\nu_2(y)
\r)^{\frac1{q_2}}\,d\mu_1(s)\,d\mu_2(t)\\
&\hs= C_1\int_{X}\int_{X}|f(s,t)|[h_1(s,t)]^{-1}\\
&\hs\hs\times\lf(\int_{X}\lf[\int_{X}[K_1(x,s)]^{\delta_1 q_1}[K_2(y,t)]^{\delta_2 q_1}
[h_2(x,y)]^{q_1}\,d\nu_1(x)\r]^{\frac{q_2}{q_1}}\,d\nu_2(y)
\r)^{\frac1{q_2}}\,d\mu_1(s)\,d\mu_2(t),
\end{align*}
which, combined with \eqref{1e4}, further implies that
\begin{align*}
\|Tf\|_{L^{\vq}_{\vec{\nu}}}
&\le C_1C_2^{1/q_2}\int_{X}\int_{X}|f(s,t)|\,d\mu_1(s)\,d\mu_2(t)
= C_1C_2^{1/q_2}\|f\|_{L^{1}_{\vec{\mu}}}.
\end{align*}
This completes the proof in this case. For the case
$q_i=1$ for some $i\in\{1,2\}$, instead of using Minkowski's
inequality, applying the Fubini theorem will also
do the job. We omit the details in this case
and hence finish the proof.
\end{proof}

\begin{proposition}\label{1p3}
Let $\vec{\mu}$, $\vec{\nu}$, the kernel $K$, and the operator $T$
be as in Proposition \ref{1p1}. Suppose $\vp:=(1,p_2)$ with $p_2\in(1,\fz)$
and $\vq:=(q_1,q_2)\in(1,\fz)^2$ satisfying $1<p_2\le q_-<\fz$,
where $q_-:=\min\{q_1,q_2\}$. Let $\gamma_i$ and $\delta_i$ be two
real numbers such that $\gamma_i+\delta_i=1$ for $i\in\{1,2\}$.
If there exist two positive functions $h_1$ and $h_2$ defined
on $X\times X$ with two positive constants $C_1$ and $C_2$
such that for almost all $(x,y)\in X\times X$,
\begin{equation}\label{1e5}
\int_{X} \lf[\mathop{\esssup}\limits_{s\in X} [K_1(x,s)]^{\gamma_1}[K_2(y,t)]^{\gamma_2}h_1(s,t)\r]^{p_2'}\,d\mu_2(t)
\le C_1[h_2(x,y)]^{p_2'}
\end{equation}
and, for almost all $(s,t)\in X\times X$,
\begin{equation}\label{1e6}
\int_{X} \lf[\int_{X} [K_1(x,s)]^{\delta_1 q_1}[K_2(y,t)]^{\delta_2 q_1}[h_2(x,y)]^{q_1}\,d\nu_1(x)\r]^{q_2/q_1}\,d\nu_2(y)
\le C_2[h_1(s,t)]^{q_2},
\end{equation}
then
$T: L^{\vp}_{\vec{\mu}}\to L^{\vq}_{\vec{\nu}}$
is bounded with
$\|T\|_{L^{\vp}_{\vec{\mu}}\to L^{\vq}_{\vec{\nu}}}\le C_1^{1/p_2'}C_2^{1/q_2}.$
\end{proposition}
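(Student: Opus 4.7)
The plan is to adapt the proof of Proposition \ref{1p1} to the endpoint case $p_1=1$: since $p_1'=\infty$, the H\"{o}lder step in the variable $s$ collapses into the trivial $L^{\infty}$-$L^{1}$ bound $\int fg\,d\mu_1\le(\esssup_s|f|)\int|g|\,d\mu_1$, while the H\"{o}lder step in $t$ still uses the pair $(p_2,p_2')$. Starting from the factorization
\begin{align*}
K_1(x,s)K_2(y,t)=\lf[K_1(x,s)^{\gamma_1}K_2(y,t)^{\gamma_2}h_1(s,t)\r]\cdot\lf[K_1(x,s)^{\delta_1}K_2(y,t)^{\delta_2}h_1(s,t)^{-1}\r],
\end{align*}
legitimate because $\gamma_i+\delta_i=1$, one obtains, after the $L^{\infty}$-$L^{1}$ bound in $s$, H\"{o}lder in $t$ with exponents $(p_2,p_2')$, and an application of hypothesis \eqref{1e5},
\begin{align*}
|Tf(x,y)|\le C_1^{1/p_2'}h_2(x,y)\lf\{\int_X\Psi(x,y,t)^{p_2}\,d\mu_2(t)\r\}^{1/p_2},
\end{align*}
where $\Psi(x,y,t):=\int_X K_1(x,s)^{\delta_1}K_2(y,t)^{\delta_2}h_1(s,t)^{-1}|f(s,t)|\,d\mu_1(s)$.

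Next I would compute the $L^{\vq}_{\vec{\nu}}$-norm by exploiting the identity $\|g\|_{L^{\vq}_{\vec{\nu}}}^{p_2}=\|g^{p_2}\|_{L^{\vq/p_2}_{\vec{\nu}}}$ together with Minkowski's integral inequality, which pulls $\int(\cdot)\,d\mu_2(t)$ out of $\|\cdot\|_{L^{\vq/p_2}_{\vec{\nu}}}$; the assumption $p_2\le q_-$ makes this admissible, since it guarantees $q_1/p_2\ge1$ and $q_2/p_2\ge1$. This yields
\begin{align*}
\|Tf\|_{L^{\vq}_{\vec{\nu}}}\le C_1^{1/p_2'}\lf\{\int_X\|h_2\Psi(\cdot,\cdot,t)\|_{L^{\vq}_{\vec{\nu}}}^{p_2}\,d\mu_2(t)\r\}^{1/p_2}.
\end{align*}
A second Minkowski interchange, this time pulling the $\mu_1$-integral hidden inside $\Psi$ out of $\|\cdot\|_{L^{\vq}_{\vec{\nu}}}$ (valid because $q_1,q_2\ge1$), produces
\begin{align*}
\|h_2\Psi(\cdot,\cdot,t)\|_{L^{\vq}_{\vec{\nu}}}\le\int_X h_1(s,t)^{-1}|f(s,t)|\,\lf\|K_1(\cdot,s)^{\delta_1}K_2(\cdot,t)^{\delta_2}h_2\r\|_{L^{\vq}_{\vec{\nu}}}\,d\mu_1(s).
\end{align*}
After pulling the $x$-independent factor $K_2(y,t)^{\delta_2q_1}$ out of the innermost $\nu_1$-integral, the norm on the right is exactly the $1/q_2$ power of the left-hand side of \eqref{1e6}, hence bounded by $C_2^{1/q_2}h_1(s,t)$. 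Cancellation of $h_1$ against $h_1^{-1}$ and recognition of the residual iterated integral of $|f|$ as $\|f\|_{L^{\vp}_{\vec{\mu}}}$ deliver the desired estimate with constant $C_1^{1/p_2'}C_2^{1/q_2}$.

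The main obstacle I expect is the careful bookkeeping of the two successive Minkowski interchanges together with the power-raising identity $\|g\|_{L^{\vq}_{\vec{\nu}}}^{p_2}=\|g^{p_2}\|_{L^{\vq/p_2}_{\vec{\nu}}}$: the first interchange requires $q_i/p_2\ge1$ and the second requires $q_i\ge1$ for $i\in\{1,2\}$, and both are licensed precisely by the hypothesis $1<p_2\le q_-$. No new test-function construction beyond those appearing in Propositions \ref{1p1} and \ref{1p2} is needed.
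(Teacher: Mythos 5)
Your proposal is correct and follows essentially the same route as the paper's proof: the same $\gamma_i+\delta_i=1$ factorization, the $\esssup$ bound in $s$ followed by H\"older in $t$ with $(p_2,p_2')$ and hypothesis \eqref{1e5}, then a Minkowski interchange licensed by $p_2\le q_-$ and an application of \eqref{1e6}. The only cosmetic difference is that you split the paper's single mixed-norm Minkowski interchange into two explicit steps via the identity $\|g\|_{L^{\vq}_{\vec{\nu}}}^{p_2}=\|g^{p_2}\|_{L^{\vq/p_2}_{\vec{\nu}}}$, which is a perfectly valid bookkeeping of the same estimate.
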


\begin{proof}
Let $f\in L^{\vp}_{\vec{\mu}}$. From \eqref{1e5} and the H\"{o}lder inequality,
we infer that, for almost every $(x,y)\in X\times X$,
\begin{align*}
|Tf(x,y)|&\le \int_{X}\int_{X}K_1(x,s)K_2(y,t)[h_1(s,t)]^{-1}
h_1(s,t)|f(s,t)|\,d\mu_1(s)\,d\mu_2(t)\\
&\le \int_{X} \mathop{\esssup}\limits_{s\in X} \lf\{[K_1(x,s)]^{\gamma_1}[K_2(y,t)]^{\gamma_2}h_1(s,t)\r\}\\
&\hs\times\int_{X}[K_1(x,s)]^{\delta_1}[K_2(y,t)]^{\delta_2}[h_1(s,t)]^{-1}|f(s,t)|\,d\mu_1(s)\,d\mu_2(t)\\
&\le \lf\{\int_{X} \lf[\mathop{\esssup}\limits_{s\in X} \{[K_1(x,s)]^{\gamma_1}[K_2(y,t)]^{\gamma_2}h_1(s,t)\}\r]^{p_2'}\,d\mu_2(t)\r\}^{1/p_2'}\\
&\hs\times\lf\{\int_{X}\lf[\int_{X}[K_1(x,s)]^{\delta_1}[K_2(y,t)]^{\delta_2 }
[h_1(s,t)]^{-1}|f(s,t)|\,d\mu_1(s)\r]^{p_2}\,d\mu_2(t)\r\}^{1/p_2}\\
&\le C_1^{1/p_2'}h_2(x,y)\lf\{\int_{X}\lf[\int_{X}
[K_1(x,s)]^{\delta_1}[K_2(y,t)]^{\delta_2 }[h_1(s,t)]^{-1}|f(s,t)|\,d\mu_1(s)\r]
^{p_2}\,d\mu_2(t)\r\}^{1/p_2}.
\end{align*}
Then, by the assumptions $q_1\ge p_2$ and $q_2\ge p_2$, and
the Minkowski inequality, we conclude that
\begin{align*}
&\|Tf\|_{L^{\vq}_{\vec{\nu}}}\\
&\hs\le C_1^{1/p_2'}\lf\{\int_{X}\lf[\int_{X}\lf(\int_{X}
\lf[\int_{X}[K_1(x,s)]^{\delta_1 q_1}[K_2(y,t)]^{\delta_2 q_1}
[h_2(x,y)]^{q_1}|f(s,t)|^{q_1}[h_1(s,t)]^{-q_1}\r.\r.\r.\r.\\
&\hs\hs\lf.\lf.\lf.\times\,d\nu_1(x)\Bigg]^{\frac{q_2}{q_1}}
\,d\nu_2(y)\r)^{\frac1{q_2}}\,d\mu_1(s)\r]^{p_2}\,d\mu_2(t)
\r\}^{\frac1{p_2}}\\
&\hs= C_1^{1/p_2'}\lf\{\int_{X}\lf[\int_{X}
|f(s,t)|[h_1(s,t)]^{-1}\lf(\int_{X}
\lf[\int_{X}[K_1(x,s)]^{\delta_1 q_1}[K_2(y,t)]^{\delta_2 q_1}
[h_2(x,y)]^{q_1}\r.\r.\r.\r.\\
&\hs\hs\lf.\lf.\lf.\times\,d\nu_1(x)\Bigg]^{\frac{q_2}{q_1}}
\,d\nu_2(y)\r)^{\frac1{q_2}}\,d\mu_1(s)\r]^{p_2}\,d\mu_2(t)
\r\}^{\frac1{p_2}},
\end{align*}
which, together with \eqref{1e6}, further implies that
\begin{align*}
\|Tf\|_{L^{\vq}_{\vec{\nu}}}
&\le C_1^{1/p_2'}C_2^{1/q_2}\lf\{\int_{X}\lf[\int_{X}
|f(s,t)|\,d\mu_1(s)\r]^{p_2}\,d\mu_2(t)
\r\}^{\frac1{p_2}}
=C_1^{1/p_2'}C_2^{1/q_2} \|f\|_{L^{\vp}_{\vec{\mu}}}.
\end{align*}
This finishes the proof of Proposition \ref{1p3}.
\end{proof}

\begin{proposition}\label{1p4}
Let $\vec{\mu}$, $\vec{\nu}$, the kernel $K$, and the operator $T$
be as in Proposition \ref{1p1}. Suppose $\vp:=(p_1,1)$ with $p_1\in(1,\fz)$
and $\vq:=(q_1,q_2)\in(1,\fz)^2$ satisfying
$1<p_1\le q_-<\fz$, where $q_-:=\min\{q_1,q_2\}$. Let $\gamma_i$ and
$\delta_i$ be two
real numbers such that $\gamma_i+\delta_i=1$ for $i\in\{1,2\}$.
If there exist two positive functions $h_1$ and $h_2$ defined
on $X\times X$ with two positive constants $C_1$ and $C_2$
such that for almost all $(x,y)\in X\times X$,
\begin{equation}\label{1e5'}
\mathop{\esssup}\limits_{t\in X} \int_{X} [K_1(x,s)]^{\gamma_1 p_1'}[K_2(y,t)]^{\gamma_2 p_1'}[h_1(s,t)]^{p_1'}\,d\mu_1(s)
\le C_1[h_2(x,y)]^{p_1'}
\end{equation}
and, for almost all $(s,t)\in X\times X$,
\begin{equation}\label{1e6'}
\int_{X} \lf[\int_{X} [K_1(x,s)]^{\delta_1 q_1}
[K_2(y,t)]^{\delta_2 q_1}[h_2(x,y)]^{q_1}\,d\nu_1(x)\r]^{q_2/q_1}\,d\nu_2(y)
\le C_2[h_1(s,t)]^{q_2},
\end{equation}
then
$T: L^{\vp}_{\vec{\mu}}\to L^{\vq}_{\vec{\nu}}$
is bounded with
$\|T\|_{L^{\vp}_{\vec{\mu}}\to L^{\vq}_{\vec{\nu}}}\le C_1^{1/p_1'}C_2^{1/q_2}.$
\end{proposition}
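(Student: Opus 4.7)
The plan is to mirror the proof of Proposition \ref{1p3} with the roles of $s$ and $t$ interchanged: since now $p_2=1$ sits on the $t$-variable, the essential supremum in hypothesis \eqref{1e5'} is taken over $t$, and H\"older's inequality with conjugate pair $(p_1',p_1)$ is applied in the $s$-variable. Concretely, for $f\in L^{\vp}_{\vec{\mu}}$, I would start from
\begin{align*}
|Tf(x,y)|\le \int_X\int_X [K_1(x,s)]^{\gamma_1}[K_2(y,t)]^{\gamma_2}h_1(s,t)\cdot [K_1(x,s)]^{\delta_1}[K_2(y,t)]^{\delta_2}[h_1(s,t)]^{-1}|f(s,t)|\,d\mu_1(s)\,d\mu_2(t),
\end{align*}
apply H\"older's inequality in $s$ with exponents $p_1'$ and $p_1$, and then bound the resulting $s$-integral of the $\gamma$-factor by its essential supremum in $t$. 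By hypothesis \eqref{1e5'}, this essential supremum is at most $C_1 [h_2(x,y)]^{p_1'}$, yielding
\begin{align*}
|Tf(x,y)|\le C_1^{1/p_1'}h_2(x,y)\int_X\left[\int_X [K_1(x,s)]^{\delta_1 p_1}[K_2(y,t)]^{\delta_2 p_1}[h_1(s,t)]^{-p_1}|f(s,t)|^{p_1}\,d\mu_1(s)\right]^{1/p_1}\,d\mu_2(t).
\end{align*}

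Next I would take the $L^{\vq}_{\vec{\nu}}$ norm on both sides. Since $\vq\in(1,\fz)^2$ and $1<p_1\le q_-=\min\{q_1,q_2\}$, we have $q_1\ge p_1$ and $q_2\ge p_1$, so Minkowski's inequality for mixed norms applies and lets us pull the outer $d\mu_2(t)$ and inner $d\mu_1(s)$ integrals outside the $L^{q_1}_{\nu_1}$ and $L^{q_2/q_1}_{\nu_2}$ norms. This rearranges the estimate into
\begin{align*}
\|Tf\|_{L^{\vq}_{\vec{\nu}}}\le C_1^{1/p_1'}\int_X\left\{\int_X |f(s,t)|^{p_1}[h_1(s,t)]^{-p_1}\left(\int_X\left[\int_X [K_1]^{\delta_1 q_1}[K_2]^{\delta_2 q_1}h_2^{q_1}\,d\nu_1\right]^{q_2/q_1}d\nu_2\right)^{p_1/q_2}\,d\mu_1(s)\right\}^{1/p_1}\,d\mu_2(t).
\end{align*}
Applying hypothesis \eqref{1e6'} to the inner double $\nu$-integral bounds the parenthesized factor by $C_2^{p_1/q_2}[h_1(s,t)]^{p_1}$, which cancels the $h_1^{-p_1}$ and collapses the remaining nested integral to $\|f\|_{L^{\vp}_{\vec{\mu}}}$ with the exact constant $C_1^{1/p_1'}C_2^{1/q_2}$.

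The main obstacle is the careful bookkeeping when invoking Minkowski's integral inequality repeatedly across mixed-norm layers; one must verify that each interchange of $L^{q_1/p_1}(d\nu_1)$ with $L^{p_1}(d\mu_1)$ and of $L^{q_2/q_1}(d\nu_2)$ with $L^{p_1}(d\mu_2)$ is justified, which uses precisely the assumptions $q_1\ge p_1$ and $q_2\ge p_1$ supplied by $p_1\le q_-$. Aside from this, the argument is the mirror image of Proposition \ref{1p3}, with the essential supremum now extracted from the $t$-slot rather than the $s$-slot, so no new analytic ingredient is needed.
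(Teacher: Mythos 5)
Your argument is correct and is precisely the proof the paper has in mind: the paper omits the proof of Proposition \ref{1p4}, stating only that it is the symmetric case of Proposition \ref{1p3}, and your write-up carries out exactly that symmetrization (H\"older with $(p_1',p_1)$ in the $s$-variable, the essential supremum extracted from the $t$-slot via \eqref{1e5'}, then Minkowski's integral inequality justified by $q_1\ge p_1$ and $q_2\ge p_1$, and finally \eqref{1e6'}). The constant $C_1^{1/p_1'}C_2^{1/q_2}$ also comes out correctly.
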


\begin{proof}
This proposition is a symmetric case of Proposition \ref{1p3}. Therefore, the proof is similar and hence
we omit it here.
\end{proof}

With the help of these Schur's tests, we next
prove the sufficiency of the main theorems.

\begin{lemma}\label{1l1}
     Let $1=p_+\leq q_-<\infty$ or $1<p_-\le p_+\leq q_-<\infty$.
     If the parameters satisfy for any $i\in\{1,2\}$,
     \begin{align*}
     \left\{
     \begin{aligned}
     &\alpha_i+1<p_i(b_i+1),\\
     &c_i\le n+1+b_i+\frac{n+1+\beta_i}{q_i}-
     \frac{n+1+\alpha_i}{p_i},
     \end{aligned}
     \right.
     \end{align*}
     then the operator $S_{\negthinspace 0,\,\vb,\,\vc}$ is
     bounded from $\lv$ to $\lt$.
\end{lemma}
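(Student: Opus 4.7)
The plan is to invoke the Schur tests established earlier in this section: Proposition \ref{1p1} when $1<p_-\le p_+\le q_-<\fz$, and Proposition \ref{1p2} when $p_1=p_2=1$. The kernel of $S_{\negthinspace 0,\,\vb,\,\vc}$ already factors between the coordinate pairs $(z,u)$ and $(w,\eta)$, which matches the product kernel $K=K_1\cdot K_2$ required in these propositions. Concretely, after absorbing the weights $(1-|u|^2)^{b_i}$ into the source measures $dv_{\alpha_i}$ via $(1-|u|^2)^{b_1}\,dv(u)=c_{\alpha_1}^{-1}(1-|u|^2)^{b_1-\alpha_1}\,dv_{\alpha_1}(u)$, I would set
$$
K_1(z,u):=\frac{(1-|u|^2)^{b_1-\alpha_1}}{|1-\langle z,u\rangle|^{c_1}},\qquad K_2(w,\eta):=\frac{(1-|\eta|^2)^{b_2-\alpha_2}}{|1-\langle w,\eta\rangle|^{c_2}},
$$
so that $S_{\negthinspace 0,\,\vb,\,\vc}$ becomes, up to the normalizing constants $c_{\alpha_i}$, an integral operator with kernel $K_1K_2$ acting from $\lv$ to $\lt$.

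Next, I would try test functions of product form,
$$
h_1(u,\eta)=(1-|u|^2)^{s_1}(1-|\eta|^2)^{s_2},\qquad h_2(z,w)=(1-|z|^2)^{t_1}(1-|w|^2)^{t_2},
$$
together with splittings $\gamma_i+\delta_i=1$ chosen independently in each coordinate direction. With this product ansatz, the Schur integrals in \eqref{1e1}--\eqref{1e2} (respectively \eqref{1e3}--\eqref{1e4} in the endpoint case) decouple into products of two one-variable Forelli--Rudin integrals of the form $\int_{\bn}(1-|u|^2)^{A}|1-\langle z,u\rangle|^{-B}\,dv(u)$, whose behaviour is controlled by Lemma \ref{asymptotic2}. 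Each coordinate then reduces to the standard one-parameter Schur test for $S_{\negthinspace 0,\,b,\,c}\colon L^p_\alpha\to L^q_\beta$; choosing $s_i,\,t_i,\,\gamma_i$ exactly as in the single-parameter argument of \cite{z15}, the hypotheses $\alpha_i+1<p_i(b_i+1)$ and $c_i\le n+1+b_i+(n+1+\beta_i)/q_i-(n+1+\alpha_i)/p_i$ translate precisely into the integrability condition $A>-1$ in each factor and into the nonnegativity of the resulting $n+1+A-B$ needed to invoke Lemma \ref{asymptotic2}(i). For the endpoint $\vp=(1,1)$, the $\gamma_i p_i'$-integral in \eqref{1e1} is replaced by an essential supremum as in \eqref{1e3}, which is again a pointwise Forelli--Rudin estimate handled in the same way.

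The main obstacle is the critical situation in which the inequality for $c_i$ holds with equality: a naive choice of $s_i,\,t_i$ then produces a borderline integral to which Lemma \ref{asymptotic2}(i) does not apply, and only Lemma \ref{asymptotic}(ii) would be available, giving a logarithmic bound that is too weak. One circumvents this by redistributing a small surplus of the factor $(1-|u|^2)^{b_i-\alpha_i}$ between the $\gamma_i$-part of the Schur splitting (which controls the integral in \eqref{1e1}/\eqref{1e3}) and the $\delta_i$-part (which controls \eqref{1e2}/\eqref{1e4}), so that each of the two resulting one-variable integrals strictly satisfies $n+1+A-B>0$. This is the technical heart of the argument, but once the product factorization is in place it amounts to coordinate-wise bookkeeping. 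Propositions \ref{1p1} and \ref{1p2} then yield the desired boundedness of $S_{\negthinspace 0,\,\vb,\,\vc}$ from $\lv$ to $\lt$.
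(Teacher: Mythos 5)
Your proposal follows essentially the same route as the paper: factor the kernel as $K_1K_2$ with $K_i$ exactly as you write, take power-type product test functions $h_1,h_2$, decouple the Schur integrals \eqref{1e1}--\eqref{1e2} (resp.\ \eqref{1e3}--\eqref{1e4}) into one-variable Forelli--Rudin integrals, and choose the exponents $s_i$, $t_i$ (the paper's $r_i$) in open intervals so that the critical case $c_i=n+1+b_i+(n+1+\beta_i)/q_i-(n+1+\alpha_i)/p_i$ is handled directly, the sub-critical case then following by the domination $|1-\langle z,u\rangle|^{\sigma_i}\le 2^{\sigma_i}$. One small correction: the relevant one-variable integrals end up with \emph{negative} exponents $n+1+A-B$ (equal to $r_ip_i'$ and $s_iq_i-(b_i-\alpha_i)\delta_iq_i$ in the paper's notation), so it is Lemma \ref{asymptotic2}(ii), not (i), that is invoked -- the resulting negative powers of $(1-|z|^2)$ and $(1-|u|^2)$ are exactly what reproduces $[h_2]^{p_2'}$ and $[h_1]^{q_2}$ on the right-hand sides.
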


\begin{proof}
For any $i\in\{1,2\}$, define
$$\lambda_i:=\frac{n+1+\beta_i}{q_i}-\frac{n+1+\alpha_i}{p_i},\ \ c_i:=n+1+b_i+\lambda_i,\ \
\text{and}\ \ \tau_i:=c_i-b_i+\alpha_i.$$ By the fact $-(1+\beta_i)/q_i<0$, we know that there exist two negative
numbers $r_1$ and $r_2$ such that, for any $i\in\{1,2\}$,
$
    -\frac{1+\beta_i}{q_i}<r_i<0.
$
In addition, from $\alpha_i+1<p_i(b_i+1)$, it follows that, for any $i\in\{1,2\}$,
\begin{align}\label{1e7}
b_i-\alpha_i+\frac{\alpha_i+1}{p'_i}>0.
\end{align}
Notice that, for any $i\in\{1,2\}$,
$
\tau_i =\frac{n+1+\alpha_i}{p'_i}+\frac{n+1+\beta_i}{q_i}>0,
$
which, combined with \eqref{1e7}, further implies that
\begin{align*}
    -\frac{\tau_i(1+\alpha_i)}{p'_i}-\frac{(b_i-\alpha_i)(n+1+\alpha_i)}{p'_i}
    <\frac{(b_i-\alpha_i)(n+1+\beta_i)}{q_i}.
\end{align*}
Then there exist $s_1$ and $s_2$ such that, for any $i\in\{1,2\}$,
    \begin{align*}
    -\frac{\tau_i(1+\alpha_i)}{p'_i}-\frac{(b_i-\alpha_i)(n+1+\alpha_i)}{p'_i}
    <\tau_is_i+(b_i-\alpha_i)(s_i-r_i)<\frac{(b_i-\alpha_i)(n+1+\beta_i)}{q_i},
  \end{align*}
which is equivalent to
    \begin{align}\label{1e9}
    -\frac{1+\alpha_i}{p'_i}-(b_i-\alpha_i)\gamma_i<s_i<(b_i-\alpha_i)\delta_i,
  \end{align}
where, for any $i\in\{1,2\}$,
$\gamma_i:=\frac{(n+1+\alpha_i)/p'_i+s_i-r_i}{\tau_i}$ 
and $\delta_i:=\frac{(n+1+\beta_i)/q_i+r_i-s_i}{\tau_i}.$
Clearly, $\gamma_i+\delta_i=1$. We now define
  $ h_1(u,\eta):=(1-|u|^2)^{s_1}(1-|\eta|^2)^{s_2},\ h_2(z,w):=(1-|z|^2)^{r_1}(1-|w|^2)^{r_2},$
  $$ K_1(z,u):=\frac{(1-|u|^2)^{b_1-\alpha_1}}{|1-\langle z,u \rangle|^{c_1}},\ \text{and}\ \ K_2(w,\eta)=\frac{(1-|\eta|^2)^{b_2-\alpha_2}}{|1-\langle w,\eta \rangle|^{c_2}}.$$
Therefore,
$$S_{\negthinspace 0,\,\vb,\,\vc}f(z,w)=\int_{\bn}\int_{\bn}K_1(z,u)K_2(w,\eta)f(u,\eta)
\,dv_{\az_1}(u)\,dv_{\az_2}(\eta).$$
We next prove this lemma by considering the following two cases.

\emph{\textbf{Case 1)}} $p_->1$. In this case, in order to use Proposition \ref{1p1} to
show the boundedness of $S_{\negthinspace 0,\,\vb,\,\vc}$, we now consider
\begin{align}\label{1e11}
&\int_{\mathbb{B}_n}[K_1(z,u)]^{\gamma_1 p'_1}[K_2(w,\eta)]^{\gamma_2 p'_1}[h_1(u,\eta)]^{p'_1}dv_{\alpha_1}(u)\noz\\
&\hs=\frac{(1-|\eta|^2)^{(b_2-\alpha_2)\gamma_2 p'_1+s_2p'_1}}
{|1-\langle w,\eta\rangle|^{c_2\gamma_2 p'_1}}\int_{\mathbb{B}_n}\frac{(1-|u|^2)^{(b_1-\alpha_1)\gamma_1 p'_1+s_1p'_1+\alpha_1}}{|1-\langle z,u\rangle|^{c_1\gamma_1 p'_1}}dv(u).
\end{align}
  From the left inequality of (\ref{1e9}), we have
  \begin{align}\label{1e10}
    (b_i-\alpha_i)\gamma_i p'_i+s_ip'_i+\alpha_i>-1.
  \end{align}
Moreover, by the fact that
  $(c_i-b_i+\alpha_i)\gamma_i=\tau_i\gamma_i=\frac{n+1+\alpha_i}{p'_i}+s_i-r_i,$
it follows that, for any $i\in\{1,2\}$,
  \begin{align}\label{1e12}
    n+1+(b_i-\alpha_i)\gamma_i p'_i+s_ip'_i+\alpha_i-c_i\gamma_i p'_i=r_ip'_i<0.
  \end{align}
From this, \eqref{1e10}, and Lemma \ref{asymptotic2}(ii), we infer that, for any given $z\in\bn$,
  \begin{align*}
    \int_{\mathbb{B}_n}\frac{(1-|u|^2)^{(b_1-\alpha_1)\gamma_1 p'_1+s_1p'_1+\alpha_1}}
    {|1-\langle z,u\rangle|^{c_1\gamma_1 p'_1}}
    dv(u)\ls (1-|z|^2)^{r_1p'_1},
  \end{align*}
which, together with \eqref{1e11}, \eqref{1e10}, \eqref{1e12}, and Lemma \ref{asymptotic2}(ii) again, further implies that
\begin{align*}
&\int_{\mathbb{B}_n}\left[\int_{\mathbb{B}_n}[K_1(z,u)]^{\gamma_1 p'_1}[K_2(w,\eta)]^{\gamma_2 p'_1}[h_1(u,\eta)]^{p'_1}dv_{\alpha_1}(u) \right]^{p'_2/p'_1}\,dv_{\alpha_2}(\eta)\\
&\hs\ls (1-|z|^2)^{r_1p'_2} \int_{\mathbb{B}_n}\frac{(1-|\eta|^2)^{(b_2-\alpha_2)\gamma_2 p'_2+s_2p'_2+\alpha_2}}
    {|1-\langle w,\eta\rangle|^{c_2\gamma_2 p'_2}}\,dv(\eta)\\
&\hs\ls (1-|z|^2)^{r_1p'_2} (1-|w|^2)^{r_2p'_2}
    \sim [h_2(z,w)]^{p'_2}.
  \end{align*}
Thus, condition \eqref{1e1} holds true for the operator $S_{\negthinspace 0,\,\vb,\,\vc}$.

We next check the second condition \eqref{1e2}. Observe that
  \begin{align*}
    &\int_{\mathbb{B}_n}[K_1(z,u)]^{\delta_1 q_1}[K_2(w,\eta)]^{\delta_2 q_1}[h_2(z,w)]^{q_1}\,dv_{\beta_1}(z)\\
    &\hs=\frac{(1-|\eta|^2)^{(b_2-\alpha_2)\delta_2 q_1}(1-|w|^2)^{r_2q_1}}{|1-\langle w,\eta \rangle|^{c_2\delta_2 q_1}}\int_{\mathbb{B}_n}\frac{(1-|u|^2)^{(b_1-\alpha_1)\delta_1 q_1}(1-|z|^2)^{r_1q_1+\beta_1}}{|1-\langle z,u \rangle|^{c_1\delta_1 q_1}}\,dv(z).
  \end{align*}
Via the definition of $r_i$, we obviously have
  \begin{align}\label{1e13}
    r_iq_i+\beta_i>-1.
  \end{align}
In addition, from the choice of $\delta_i$, it follows that, for any $i\in\{1,2\}$,
  $(c_i-b_i+\alpha_i)\delta_i=\tau_i\delta_i=\frac{n+1+\beta_i}{q_i}+r_i-s_i,$
which, combined with the right inequality of \eqref{1e9}, implies that
  \begin{align}\label{1e14}
    n+1+r_iq_i+\beta_i-c_i\delta q_i=s_iq_i-(b_i-\alpha_i)\delta_iq_i<0.
  \end{align}
From this, \eqref{1e13}, and Lemma \ref{asymptotic2}(ii), we deduce that
  \begin{align*}
    \int_{\mathbb{B}_n}\frac{(1-|z|^2)^{r_1q_1+\beta_1}}{|1-\langle z,u \rangle|^{c_1\delta q_1}}\,dv(z)
\ls (1-|u|^2)^{s_1q_1-(b_1-\alpha_1)\delta_1q_1}.
  \end{align*}
This, together with \eqref{1e13}, \eqref{1e14}, and Lemma \ref{asymptotic2}(ii)
again, further implies that
  \begin{align*}
  &\int_{\mathbb{B}_n}\left[\int_{\bn}[K_1(z,u)]^{\delta_1 q_1}[K_2(w,\eta)]^{\delta_2 q_1}[h_2(z,w)]^{q_1}\,dv_{\beta_1}(z) \right]^{q_2/q_1}\,dv_{\beta_2}(w)\\
  &\hs\ls (1-|u|^2)^{s_1q_2} (1-|\eta|^2)^{(b_2-\alpha_2)\delta_2 q_2} \int_{\mathbb{B}_n}\frac{(1-|w|^2)^{r_2q_2+\beta_2}}
    {|1-\langle w,\eta\rangle|^{c_2\delta_2 q_2}}\,dv(\eta)\\
    &\hs\ls (1-|u|^2)^{s_1q_2} (1-|\eta|^2)^{s_2q_2}
    \sim [h_1(u,\eta)]^{q_2}.
  \end{align*}
This is just inequality \eqref{1e2} for $S_{\negthinspace 0,\,\vb,\,\vc}$. Thus,
the operator $S_{\negthinspace 0,\,\vb,\,\vc}$ satisfies all conditions of Proposition \ref{1p1}.
Then, applying Proposition \ref{1p1}, we find that $S_{\negthinspace 0,\,\vb,\,\vc}$ is
bounded from $\lv$ to $\lt$ when $c_i= n+1+b_i+\lambda_i$ for $i\in\{1,2\}$ in the present case.

If $c_i< n+1+b_i+\lambda_i$ with $i\in\{1,2\}$, let $\sigma_i=n+1+b_i+\lambda_i-c_i$
and $\vec{\sigma}:=(\sigma_1,\sigma_2)$. Then, for any $i\in\{1,2\}$, $\sigma_i>0$
and from the above proof, it follows that $S_{\negthinspace 0,\,\vb,\,\vc+\vec{\sigma}}$ is
bounded from $\lv$ to $\lt$. Notice that
  $$\frac{1}{|1-\langle z,u\rangle|^{c_1}}=\frac{|1-\langle z,u\rangle|^{\sigma_1}}{|1-\langle z,u\rangle|^{n+1+b_1+\lambda_1}}\leq \frac{2^{\sigma_1}}{|1-\langle z,u\rangle|^{n+1+b_1+\lambda_1}}$$
and
    $$\frac{1}{|1-\langle w,\eta\rangle|^{c_2}}=\frac{|1-\langle w,\eta\rangle|^{\sigma_2}}{|1-\langle w,\eta\rangle|^{n+1+b_2+\lambda_2}}\leq \frac{2^{\sigma_2}}{|1-\langle w,\eta\rangle|^{n+1+b_2+\lambda_2}}.$$
Therefore, for any $f\in \lv$, we have
  $|S_{\negthinspace 0,\,\vb,\,\vc}f(z,w)|\leq 2^{\sigma_1+\sigma_2}
S_{\negthinspace 0,\,\vb,\,\vc+\vec{\sigma}}|f|(z,w),$
which implies that the operator $S_{\negthinspace 0,\,\vb,\,\vc}$ is also bounded from $\lv$ to $\lt$ when
$c_i< n+1+b_i+\lambda_i$ for $i\in\{1,2\}$, and hence completes the proof of Lemma \ref{1l1} in this case.

\emph{\textbf{Case 2)}} $p_+=1$. In this case, for any $i\in\{1,2\}$, note that
$\gamma_i:=\frac{s_i-r_i}{\tau_i}$, $\delta_i:=\frac{(n+1+\beta_i)/q_i+r_i-s_i}{\tau_i},$
and
\begin{align}\label{1e9'}
-(b_i-\alpha_i)\gamma_i<s_i<(b_i-\alpha_i)\delta_i.
\end{align}
In order to show the boundedness of $S_{\negthinspace 0,\,\vb,\,\vc}$ by
applying Proposition \ref{1p2}, we next consider
\begin{align*}
[K_1(z,u)]^{\gamma_1}[K_2(w,\eta)]^{\gamma_2}h_1(u,\eta)
    =\frac{(1-|u|^2)^{\gamma_1(b_1-\alpha_1)+s_1}
    (1-|\eta|^2)^{\gamma_2(b_2-\alpha_2)+s_2}}{|1-\langle z,u\rangle|^{c_1\gamma_1}|1-\langle w,\eta\rangle|^{c_2\gamma_2}}.
\end{align*}
From the Cauchy-Schwarz inequality, it follows that, for any $z\in\bn$ and $u\in\bn$,
\begin{align}\label{1e15}
    |1-\langle z,u \rangle|\geq 1-|\langle z,u \rangle|\geq 1-|z|=\frac{1-|z|^2}{1+|z|}\geq \frac{1-|z|^2}{2}
  \end{align}
and
    \begin{align}\label{1e16}
    |1-\langle z,u \rangle|\geq \frac{1-|u|^2}{2},
  \end{align}
  Since $\tau_i=c_i-b_i+\alpha_i$ and $\gamma_i =(s_i-r_i)/\tau_i$, we obtain
  \begin{align}\label{1e19}
  c_i\gamma_i=(b_i-\alpha_i)\gamma_i+s_i-r_i.
  \end{align}
By the left inequality of \eqref{1e9'}, we know that, for any $i\in\{1,2\}$,
$\gamma_i(b_i-\alpha_i)+s_i>0.$
This, combined with \eqref{1e15}, \eqref{1e16}, \eqref{1e19}, and the fact that $-r_i>0$, implies that,
for any given $z\in\bn$ and any $u\in\bn$,
\begin{align}\label{1e21}
\frac{(1-|u|^2)^{\gamma_1(b_1-\alpha_1)+s_1}(1-|z|^2)^{-r_1}}{|1-\langle z,u\rangle|^{c_1\gamma_1}}
=\lf(\frac{1-|u|^2)}{|1-\langle z,u\rangle|}\r)^{\gamma_1(b_1-\alpha_1)+s_1}
\lf(\frac{1-|z|^2}{|1-\langle z,u\rangle|}\r)^{-r_1}\leq C
  \end{align}
and, similarly, for any given $w\in\bn$ and any $\eta\in\bn$,
\begin{align*}
    \frac{(1-|\eta|^2)^{\gamma_2(b_2-\alpha_2)+s_2}(1-|w|^2)^{-r_2}}{|1-\langle w,\eta\rangle|^{c_2\gamma}}\leq C.
  \end{align*}
This, together with \eqref{1e21}, further conclude that, for any given $(z,w)\in\bn\times\bn$,
\begin{align*}
\mathop{\esssup}\limits_{(u,\eta)\in \mathbb{B}_n\times \mathbb{B}_n }[K_1(z,u)]^{\gamma_1}
[K_2(w,\eta)]^{\gamma_2}h_1(u,\eta)
    \leq C h_2(z,w).
\end{align*}
Thus, condition \eqref{1e3} holds true for the operator $S_{\negthinspace 0,\,\vb,\,\vc}$.
Note that, in this case,  \eqref{1e4} is just \eqref{1e2} since no $p_i$ appears in this inequality.
Therefore, from the proof of \eqref{1e2} in Case 1), we infer that the operator $S_{\negthinspace 0,\,\vb,\,\vc}$
also satisfies \eqref{1e4} in the present case, and hence
all conditions of Proposition \ref{1p2} is satisfied for $S_{\negthinspace 0,\,\vb,\,\vc}$.

Thus, applying Proposition \ref{1p2}, we find that $S_{\negthinspace 0,\,\vb,\,\vc}$ is
bounded from $\lv$ to $\lt$ when $c_i= n+1+b_i+\lambda_i$ for $i\in\{1,2\}$. In addition,
by an argument similar to that used in the proof of Case1),
we find that the operator $S_{\negthinspace 0,\,\vb,\,\vc}$ is also bounded from $\lv$ to $\lt$ when
$c_i< n+1+b_i+\lambda_i$ for $i\in\{1,2\}$, and hence completes the whole proof of Lemma \ref{1l1}.
\end{proof}

\begin{lemma}\label{1l3}
 Let $1=p_2<p_1\leq q_-<\infty$.
     If the parameters satisfy
     \begin{align*}
     \left\{
     \begin{aligned}
     &\alpha_1+1<p_1(b_1+1),\ \ c_1\le n+1+b_1+\frac{n+1+\beta_1}{q_1}-
     \frac{n+1+\alpha_1}{p_1}, \\
     &\alpha_2=b_2,\ \ c_2<\frac{n+1+\beta_2}{q_2},
     \end{aligned}
     \right.
     \end{align*}
     then the operator $S_{\negthinspace 0,\,\vb,\,\vc}$ is
     bounded from $\lv$ to $\lt$.
\end{lemma}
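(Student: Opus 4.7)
The strategy is to apply the Schur-type test for mixed-norm spaces with $\vp=(p_1,1)$, namely Proposition \ref{1p4}, in the setting $X=\bn$, $d\mu_i=dv_{\az_i}$, $d\nu_i=dv_{\bz_i}$, and with kernels $K_1(z,u)=(1-|u|^2)^{b_1-\az_1}|1-\la z,u\ra|^{-c_1}$, $K_2(w,\eta)=|1-\la w,\eta\ra|^{-c_2}$; the second kernel has no weight in the numerator because $\az_2=b_2$. I will choose test functions of the separable form $h_1(u,\eta)=(1-|u|^2)^{s_1}$ and $h_2(z,w)=(1-|z|^2)^{r_1}$, together with exponents $\gz_1=\tau_1^{-1}[(n+1+\az_1)/p_1'+s_1-r_1]$, $\dz_1=\tau_1^{-1}[(n+1+\bz_1)/q_1+r_1-s_1]$ where $\tau_1:=c_1-b_1+\az_1$, and crucially $\gz_2=0$, $\dz_2=1$; i.e., the second coordinate is ``trivialized'' so as to be absorbed by the esssup structure of Proposition \ref{1p4}.

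With these choices, the two Schur conditions \eqref{1e5'} and \eqref{1e6'} split as products in the two coordinates. For \eqref{1e5'}, the $\eta$-esssup collapses to $\mathop{\esssup}_{\eta}1=1$, and the whole left-hand side reduces to the one-coordinate $u$-integral
\begin{align*}
\int_{\bn}\frac{(1-|u|^2)^{(b_1-\az_1)\gz_1 p_1'+s_1 p_1'+\az_1}}{|1-\la z,u\ra|^{c_1\gz_1 p_1'}}\,dv(u),
\end{align*}
which, exactly as in the proof of Lemma \ref{1l1}, is bounded by $(1-|z|^2)^{r_1 p_1'}$ via Lemma \ref{asymptotic2}(ii) after picking $r_1<0$ and $s_1$ in the open interval $-(1+\az_1)/p_1'-(b_1-\az_1)\gz_1<s_1<(b_1-\az_1)\dz_1$, whose non-emptiness uses exactly $\az_1+1<p_1(b_1+1)$. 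Similarly, \eqref{1e6'} reduces to the product of the $z$-integral already estimated in Lemma \ref{1l1} and the $w$-integral
\begin{align*}
\int_{\bn}\frac{(1-|w|^2)^{\bz_2}}{|1-\la w,\eta\ra|^{c_2 q_2}}\,dv(w),
\end{align*}
which by Lemma \ref{asymptotic2}(i) is bounded by a constant precisely when $n+1+\bz_2-c_2 q_2>0$, i.e., $c_2<(n+1+\bz_2)/q_2$, which is our standing hypothesis.

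The main obstacle, distinguishing this proof from Lemma \ref{1l1}, is reconciling the endpoint $p_2=1$ with the integrability constraints on the second coordinate: the esssup in \eqref{1e5'} forces any admissible $s_2$ to satisfy $s_2\ge 0$, while the $w$-integral in \eqref{1e6'} imposes a competing upper bound on $r_2$ in terms of $c_2$; in Lemma \ref{1l1} these two requirements were balanced by the strictly positive margin coming from $\az_2+1<p_2(b_2+1)$, which is no longer available here. The key observation is that the choices $s_2=r_2=0$, combined with $\az_2=b_2$ (making $K_2$ purely singular) and the strict inequality $c_2<(n+1+\bz_2)/q_2$, render both constraints simultaneously satisfiable via Lemma \ref{asymptotic2}(i). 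Finally, when $c_1<n+1+b_1+(n+1+\bz_1)/q_1-(n+1+\az_1)/p_1$ is strict, I will reduce to the critical case by the pointwise domination $|1-\la z,u\ra|^{-c_1}\le 2^{\sigma_1}|1-\la z,u\ra|^{-(c_1+\sigma_1)}$ with $\sigma_1:=n+1+b_1+\lambda_1-c_1>0$, exactly as in the closing argument of Lemma \ref{1l1}.
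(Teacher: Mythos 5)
Your proposal is correct and follows the paper's own route: the same application of Proposition \ref{1p4} with the same kernels $K_1,K_2$ and the same first-coordinate data $h_1(u,\eta)=(1-|u|^2)^{s_1}$, $\gamma_1,\delta_1$ built from $\tau_1=c_1-b_1+\alpha_1$, and the same domination trick to pass from the critical $c_1$ to smaller values. The only deviation is your choice $\gamma_2=0$, $\delta_2=1$ and the removal of the $(1-|w|^2)^{r_2}$ factor from $h_2$, where the paper instead takes $\gamma_2=-r_2/c_2$, $\delta_2=1+r_2/c_2$ with $r_2<0$; both verify \eqref{1e5'} and \eqref{1e6'} under $c_2<\frac{n+1+\beta_2}{q_2}$, and your choice has the small advantage of covering $c_2=0$ uniformly, whereas the paper must treat that case by a separate comparison with $S_{0,\vb,(c_1,c_0)}$.
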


\begin{proof}
Suppose $$\lambda_1:=\frac{n+1+\beta_1}{q_1}-\frac{n+1+\alpha_1}{p_1},\ \ c_1:=n+1+b_1+\lambda_1,\ \
\tau_1:=c_1-b_1+\alpha_1,$$ and $c_2\neq 0$. By the fact $-(1+\beta_i)/q_i<0$ with $i\in \{1,2\}$,
we know that there exist two negative
numbers $r_1$ and $r_2$ such that, for any $i \in \{1,2\}$,
$
    -\frac{1+\beta_i}{q_i}<r_i<0.
$
In addition, for any $i\in\{1,2\}$, we have
$
\tau_1=\frac{n+1+\alpha_1}{p'_1}+\frac{n+1+\beta_1}{q_1}>0.
$
This, combined with the fact that $b_1-\alpha_1+\frac{\alpha_1+1}{p'_1}>0,$
further implies that,
  \begin{align*}
    -\frac{\tau_1(1+\alpha_1)}{p'_1}-\frac{(b_1-\alpha_1)(n+1+\alpha_1)}{p'_1}
    <\frac{(b_1-\alpha_1)(n+1+\beta_1)}{q_1}.
  \end{align*}
Thus, there exists $s_1$ such that
    \begin{align*}
    -\frac{\tau_1(1+\alpha_1)}{p'_1}-\frac{(b_1-\alpha_1)(n+1+\alpha_1)}{p'_1}
    <\tau_1s_1+(b_1-\alpha_1)(s_1-r_1)<\frac{(b_1-\alpha_1)(n+1+\beta_1)}{q_1}.
  \end{align*}
That is
    \begin{align}\label{1e24'}
    -\frac{1+\alpha_1}{p'_1}-(b_1-\alpha_1)\gamma_1<s_1<(b_1-\alpha_1)\delta_1,
  \end{align}
where $\gamma_1:=\frac{(n+1+\alpha_1)/p'_1+s_1-r_1}{\tau_1}$
and $\delta_1:=\frac{(n+1+\beta_1)/q_1+r_1-s_1}{\tau_1}.$
Clearly, $\gamma_1+\delta_1=1$. We now define
  $\gamma_2:=-\frac{r_2}{c_2}$ and $\delta_2:=1+\frac{r_2}{c_2}.$
  Obviously, $\gamma_2+\delta_2=1$.
Let
  $ h_1(u,\eta):=(1-|u|^2)^{s_1}$, $h_2(z,w):=(1-|z|^2)^{r_1}(1-|w|^2)^{r_2},$
  $$ K_1(z,u):=\frac{(1-|u|^2)^{b_1-\alpha_1}}{|1-\langle z,u \rangle|^{c_1}},\ \text{and}\ \
K_2(w,\eta)=\frac1{|1-\langle w,\eta \rangle|^{c_2}}.$$
Therefore,
$$S_{\negthinspace 0,\,\vb,\,\vc}f(z,w)=\int_{\bn}\int_{\bn}K_1(z,u)K_2(w,\eta)f(u,\eta)
\,dv_{\az_1}(u)\,dv_{\az_2}(\eta).$$
In order to use Proposition \ref{1p4} to show the boundedness of
$S_{\negthinspace 0,\,\vb,\,\vc}$, we next consider
\begin{align}\label{1e25'}
&\int_{\mathbb{B}_n}[K_1(z,u)]^{\gamma_1 p'_1}[K_2(w,\eta)]^{\gamma_2 p'_1}[h_1(u,\eta)]^{p'_1}dv_{\alpha_1}(u)\noz\\
&\hs=\frac{1}{|1-\langle w,\eta\rangle|^{c_2\gamma_2 p'_1}}
\int_{\bn}\frac{(1-|u|^2)^{(b_1-\alpha_1)\gamma_1 p'_1+s_1p_1'+\alpha_1}}{|1-\langle z,u\rangle|^{c_1\gamma_1 p'_1}}dv(u).
\end{align}
Indeed, from \eqref{1e15} and the fact $c_2\gamma_2=-r_2>0$, we infer that, for any $w\in\bn$ and $\eta\in\bn$,
  \begin{align}\label{1e26'}
    \frac{(1-|w|^2)^{-r_2}}{|1-\langle w,\eta\rangle|^{c_2\gamma_2}}\leq C.
  \end{align}
By the left inequality of \eqref{1e24'}, we know that
  \begin{align}\label{1e27'}
   (b_1-\alpha_1)\gamma_1p'_1+s_1p'_1+\alpha_1>-1.
  \end{align}
Moreover, from the fact that
  $(c_1-b_1+\alpha_1)\gamma_1=\tau_1\gamma_1=\frac{n+1+\alpha_1}{p'_1}+s_1-r_1,$
it follows that $
     n+1+(b_1-\alpha_1)\gamma_1 p'_1+s_1p'_1+\alpha_1-c_1\gamma_1 p'_1=r_1p'_1<0.$
By this, \eqref{1e27'}, and Lemma \ref{asymptotic2}(ii), we conclude that, for any given $z\in\bn$,
  \begin{align*}
    \int_{\bn}\frac{(1-|u|^2)^{(b_1-\alpha_1)\gamma_1 p'_1+s_1p_1'+\alpha_1}}{|1-\langle z,u\rangle|^{c_1\gamma_1 p'_1}}dv(u)\ls (1-|z|^2)^{r_1p'_1},
  \end{align*}
which, together with \eqref{1e25'} and \eqref{1e26'}, further implies that
\begin{align*}
&\mathop{\esssup}\limits_{\eta\in \mathbb{B}_n}\int_{\mathbb{B}_n}[K_1(z,u)]^{\gamma_1 p'_1}
[K_2(w,\eta)]^{\gamma_2 p'_1}[h_1(u,\eta)]^{p'_1}dv_{\alpha_1}(u)\\
&\hs\ls (1-|z|^2)^{r_1p'_1}(1-|w|^2)^{r_2p'_1}
\ls [h_2(z,w)]^{p'_1}.
  \end{align*}
Thus, condition \eqref{1e5'} holds true for the operator $S_{\negthinspace 0,\,\vb,\,\vc}$.

We next check the second condition \eqref{1e6'} of Proposition \ref{1p4}. Observe that
  \begin{align}\label{1e1''}
    &\int_{\mathbb{B}_n}[K_1(z,u)]^{\delta_1 q_1}[K_2(w,\eta)]^{\delta_2 q_1}[h_2(z,w)]^{q_1}\,dv_{\beta_1}(z)\noz\\
    &\hs=\frac{(1-|w|^2)^{r_2q_1}(1-|u|^2)^{(b_1-\alpha_1)\delta_1 q_1}}{|1-\langle w,\eta \rangle|^{c_2\delta_2 q_1}}\int_{\mathbb{B}_n}\frac{(1-|z|^2)^{r_1q_1+\beta_1}}{|1-\langle z,u \rangle|^{c_1\delta_1 q_1}}\,dv(z).
  \end{align}
By the definition of $r_i$, we obviously have
  \begin{align}\label{1e28'}
    r_iq_i+\beta_i>-1
  \end{align}
for any $i \in \{1,2\}$. Since $\delta_2=1+r_2/c_2$ and $c_2<(n+1+\beta_2)/q_2$, it follows that
  \begin{align*}
    n+1+r_2q_2+\beta_2-c_2\delta_2q_2=q_2\lf(\frac{n+1+\beta_2}{q_2}-c_2\r)>0.
  \end{align*}
From this, \eqref{1e28'}, and Lemma \ref{asymptotic2}(i),  we deduce that, for any given $u\in\bn$,
  \begin{align}\label{1e29'}
    \int_{\mathbb{B}_n}\frac{(1-|w|^2)^{r_2q_2+\beta_2}}{|1-\langle w,\eta \rangle|^{c_2\delta_2 q_2}}dv(w)\leq C.
  \end{align}
Moreover, from the choice of $\delta_1$, we know that
  $(c_1-b_1+\alpha_1)\delta_1=\tau_1\delta_1=\frac{n+1+\beta_1}{q_1}+r_1-s_1,$
which, combined with the right inequality of \eqref{1e24'}, implies that
$n+1+r_1q_1+\beta_1-c_1\delta_1 q_1=s_1q_1-(b_1-\alpha_1)\delta_1q_1<0.$
Applying this, \eqref{1e28'}, and Lemma \ref{asymptotic2}(ii), we know that, for any given $\eta\in\bn$,
  \begin{align*}
    \int_{\mathbb{B}_n}\frac{(1-|z|^2)^{r_1q_1+\beta_1}}{|1-\langle z,u \rangle|^{c_1\delta_1 q_1}}\,dv(z)
\ls (1-|u|^2)^{s_1q_1-(b_1-\alpha_1)\delta_1q_1}.
  \end{align*}
This, together with \eqref{1e1''} and \eqref{1e29'}, further implies that
  \begin{align*}
  &\int_{\mathbb{B}_n}\left[\int_{\bn}[K_1(z,u)]^{\delta_1 q_1}[K_2(w,\eta)]^{\delta_2 q_1}[h_2(z,w)]^{q_1}\,dv_{\beta_1}(z) \right]^{q_2/q_1}\,dv_{\beta_2}(w)\\
  &\hs\ls (1-|u|^2)^{s_1q_2} \int_{\mathbb{B}_n}\frac{(1-|w|^2)^{r_2q_2+\beta_2}}
    {|1-\langle w,\eta\rangle|^{c_2\delta_2 q_2}}\,dv(\eta)
    \ls [h_1(u,\eta)]^{q_2}.
  \end{align*}
This is just inequality \eqref{1e6'} for $S_{\negthinspace 0,\,\vb,\,\vc}$. Thus,
the operator $S_{\negthinspace 0,\,\vb,\,\vc}$ satisfies all conditions of Proposition \ref{1p4}.
Then, applying Proposition \ref{1p4}, we find that $S_{\negthinspace 0,\,\vb,\,\vc}$ is
bounded from $\lv$ to $\lt$ in the case $c_1= n+1+b_2+\lambda_1$ and $c_2\neq0$.
When $c_1= n+1+b_2+\lambda_1$ and $c_2=0$, observe that there exists a positive
number $c_0$ such that $0<c_0<(n+1+\beta_2)/q_2$. Clearly, $|1-\langle w,\eta\rangle|\ls 1$
for any $w\in\bn$ and $\eta\in\bn$.
Thus, $1\ls \frac 1{|1-\langle w,\eta\rangle|^{c_0}}$ and hence
$|S_{\negthinspace 0,\,\vb,\,(c_1,0)}f(z,w)|\ls S_{\negthinspace 0,\,\vb,\,(c_1,c_0)}|f|(z,w).$
Therefore, by the above proof, we conclude that the operator $S_{\negthinspace 0,\,\vb,\,(c_1,0)}$
is also bounded from $\lv$ to $\lt$ in the case $c_1= n+1+b_1+\lambda_1$.

If $c_1<  n+1+b_1+\lambda_1$, let $\sigma_1=n+1+b_1+\lambda_1-c_1$
and $\vec{\sigma}:=(\sigma_1+c_1,c_2)$. Then $\sigma_1+c_1=n+1+b_1+\lambda_1$
and, from the above proof, it follows that $S_{\negthinspace 0,\,\vb,\,\vec{\sigma}}$ is
bounded from $\lv$ to $\lt$. Notice that
     $$\frac{1}{|1-\langle w,\eta\rangle|^{c_1}}=\frac{|1-\langle w,\eta\rangle|^{\sigma_1}}{|1-\langle w,\eta\rangle|^{c_1+\sigma_1}}\leq \frac{2^{\sigma_1}}{|1-\langle w,\eta\rangle|^{\sigma_1+c_1}}.$$
Therefore, for any $f\in \lv$, we have
  $|S_{\negthinspace 0,\,\vb,\,\vc}f(z,w)|\leq 2^{\sigma_1}S_{\negthinspace 0,\,\vb,\,\vec{\sigma}}|f|(z,w),$
which implies that the operator $S_{\negthinspace 0,\,\vb,\,\vc}$ is also bounded from $\lv$ to $\lt$ in the case
$c_1<  n+1+b_1+\lambda_1$. This finishes the proof of Lemma \ref{1l3}.
\end{proof}

\begin{lemma}\label{1l4}
	Let $1=p_1<p_2\leq q_-<\infty$.
     If the parameters satisfy
     \begin{align*}
     \left\{
     \begin{aligned}
     &\alpha_1=b_1,\ \ c_1< \frac{n+1+\beta_1}{q_1}, \\
     &\alpha_2+1<p_2(b_2+1),\ \ c_2\le n+1+b_2+\frac{n+1+\beta_2}{q_2}-
     \frac{n+1+\alpha_2}{p_2},
     \end{aligned}
     \right.
     \end{align*}
     then the operator $S_{\negthinspace 0,\,\vb,\,\vc}$ is
     bounded from $\lv$ to $\lt$.
\end{lemma}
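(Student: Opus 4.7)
This lemma is the mirror image of Lemma \ref{1l3} with the roles of the two coordinates swapped: now $p_1=1$ is the endpoint while $p_2>1$, and the critical Forelli--Rudin condition sits on the second factor instead of the first. Accordingly, the plan is to invoke Proposition \ref{1p3} (whose exponent is $\vp=(1,p_2)$) in place of Proposition \ref{1p4}, and to set up test functions by exchanging the indices $1\leftrightarrow 2$ throughout the construction carried out in the proof of Lemma \ref{1l3}.

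Concretely, I would first treat the critical case $c_2=n+1+b_2+\lambda_2$ where $\lambda_2:=(n+1+\beta_2)/q_2-(n+1+\alpha_2)/p_2$, and assume $c_1\neq 0$. Choose negatives $r_1,r_2$ with $-(1+\beta_i)/q_i<r_i<0$ for $i\in\{1,2\}$, and, using $\alpha_2+1<p_2(b_2+1)$ together with $\tau_2:=c_2-b_2+\alpha_2=(n+1+\alpha_2)/p_2'+(n+1+\beta_2)/q_2>0$, pick $s_2$ satisfying
\[
-\frac{1+\alpha_2}{p_2'}-(b_2-\alpha_2)\gamma_2<s_2<(b_2-\alpha_2)\delta_2,
\]
with $\gamma_2:=[(n+1+\alpha_2)/p_2'+s_2-r_2]/\tau_2$ and $\delta_2:=[(n+1+\beta_2)/q_2+r_2-s_2]/\tau_2$. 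On the first coordinate, set $\gamma_1:=-r_1/c_1$ and $\delta_1:=1+r_1/c_1$. Then put
\[
h_1(u,\eta):=(1-|\eta|^2)^{s_2},\quad h_2(z,w):=(1-|z|^2)^{r_1}(1-|w|^2)^{r_2},
\]
\[
K_1(z,u):=\frac{1}{|1-\langle z,u\rangle|^{c_1}},\quad K_2(w,\eta):=\frac{(1-|\eta|^2)^{b_2-\alpha_2}}{|1-\langle w,\eta\rangle|^{c_2}},
\]
where the absence of a $(1-|u|^2)$ weight in $K_1$ comes from $\alpha_1=b_1$.

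The verification of Proposition \ref{1p3}'s hypothesis \eqref{1e5} is where the endpoint structure enters. The essential supremum in the first variable is pointwise: using $|1-\langle z,u\rangle|\geq (1-|z|^2)/2$ together with $c_1\gamma_1=-r_1>0$ (which holds regardless of the sign of $c_1$), we get
\[
\mathop{\esssup}_{u\in\mathbb{B}_n}[K_1(z,u)]^{\gamma_1}(1-|z|^2)^{-r_1}\leq C,
\]
so after taking the sup the only remaining dependence is the integral over $\eta$ of $[K_2(w,\eta)]^{\gamma_2 p_2'}(1-|\eta|^2)^{s_2 p_2'}$, and the choice of $s_2$ together with $\tau_2\gamma_2=(n+1+\alpha_2)/p_2'+s_2-r_2$ makes Lemma \ref{asymptotic2}(ii) applicable, yielding a bound by $(1-|w|^2)^{r_2 p_2'}=[h_2(z,w)]^{p_2'}/(1-|z|^2)^{r_1 p_2'}$, which is exactly \eqref{1e5}. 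Condition \eqref{1e6} is structurally identical to \eqref{1e2} in the proof of Lemma \ref{1l1}: the inner $z$-integral converges to a constant by Lemma \ref{asymptotic2}(i) because $c_1<(n+1+\beta_1)/q_1$, and the outer $w$-integral is handled exactly as in the critical case, using $s_2q_2-(b_2-\alpha_2)\delta_2 q_2<0$ and Lemma \ref{asymptotic2}(ii). This gives Proposition \ref{1p3}'s boundedness in the critical, nondegenerate case.

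The remaining edge cases are routine dominations transplanted from Lemma \ref{1l3}: if $c_1=0$ pick $0<c_0<(n+1+\beta_1)/q_1$ and use $1\lesssim |1-\langle z,u\rangle|^{-c_0}$; if $c_2<n+1+b_2+\lambda_2$ introduce $\sigma_2=n+1+b_2+\lambda_2-c_2>0$ and bound $|1-\langle w,\eta\rangle|^{-c_2}\leq 2^{\sigma_2}|1-\langle w,\eta\rangle|^{-(c_2+\sigma_2)}$; in either case the operator is dominated pointwise by one already shown bounded. The main obstacle is purely bookkeeping: making sure that the exponent $c_1\gamma_1=-r_1$ coming out of the essential supremum in \eqref{1e5} genuinely lands in the regime where the geometric estimate $|1-\langle z,u\rangle|\geq (1-|z|^2)/2$ is useful, which it does precisely because $r_1<0$; once this is observed, the rest follows by mirroring the Lemma \ref{1l3} computation line by line.
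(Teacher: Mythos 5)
Your proposal is correct and follows exactly the route the paper intends: the paper's own proof of Lemma \ref{1l4} simply declares it the symmetric case of Lemma \ref{1l3} with the kernels $K_1(z,u)=|1-\langle z,u\rangle|^{-c_1}$ and $K_2(w,\eta)=(1-|\eta|^2)^{b_2-\alpha_2}|1-\langle w,\eta\rangle|^{-c_2}$ and omits all details, and your write-up supplies precisely those details, correctly replacing Proposition \ref{1p4} by Proposition \ref{1p3}, swapping the roles of the indices in the choice of $\gamma_i,\delta_i,r_i,s_2$, and handling the degenerate cases $c_1=0$ and $c_2<n+1+b_2+\lambda_2$ by the same pointwise dominations. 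The verifications of \eqref{1e5} and \eqref{1e6} you sketch match the computations in Lemmas \ref{1l1} and \ref{1l3} under the index exchange, so nothing is missing.
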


\begin{proof}
This lemma is a symmetric case of Lemma \ref{1l3}. Therefore, its proof is similar to that of Lemma \ref{1l3}
with the kernel replaced by
  $$  K_1(z,u):=\frac{1}{|1-\langle z,u \rangle|^{c_1}},\ \text{and}\ \ K_2(w,\eta)=\frac{(1-|\eta|^2)^{b_2-\alpha_2}}{|1-\langle w,\eta \rangle|^{c_2}},$$ and hence we omit the proof.
\end{proof}

\begin{lemma}\label{1l5}
	 Let $1=p_+\leq q_-<\infty$.
     If the parameters satisfy for any $i\in\{1,2\}$,
     \begin{align*}
     \left\{
     \begin{aligned}
     &\alpha_i=b_i,\\
     &c_i<\frac{n+1+\beta_i}{q_i},
     \end{aligned}
     \right.
     \end{align*}
     then the operator $S_{\negthinspace 0,\,\vb,\,\vc}$ is
     bounded from $\lv$ to $\lt$.
\end{lemma}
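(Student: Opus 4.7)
The plan is to apply Proposition \ref{1p2} (the $\vp=(1,1)$ endpoint Schur's test), following the pattern of the proofs of Lemmas \ref{1l3} and \ref{1l4} but with the further simplification that $\alpha_i=b_i$ for \emph{both} $i\in\{1,2\}$, so neither kernel carries a weight factor $(1-|\cdot|^2)^{b_i-\alpha_i}$. In this situation only the conditions $c_i<(n+1+\beta_i)/q_i$ must be encoded, which suggests a purely ``radial'' test-function pair $h_1\equiv 1$ and $h_2(z,w):=(1-|z|^2)^{r_1}(1-|w|^2)^{r_2}$ with negative $r_i$.

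First I would reduce to the case $c_1,c_2>0$. Since $\beta_i>-1$ and $q_i\ge 1$, the interval $(0,(n+1+\beta_i)/q_i)$ is nonempty; if some $c_i\le 0$, choose $c_i'\in(0,(n+1+\beta_i)/q_i)$ (and keep $c_i$ otherwise). Because $|1-\langle z,u\rangle|\le 2$, one has $|1-\langle z,u\rangle|^{-c_i}\le 2^{c_i'-c_i}|1-\langle z,u\rangle|^{-c_i'}$, giving the pointwise domination $|S_{\negthinspace 0,\,\vb,\,\vc}f|\le CS_{\negthinspace 0,\,\vb,\,\vc'}|f|$ with $\vc'=(c_1',c_2')$ satisfying the same hypotheses and positive components.

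Next, assuming $c_1,c_2>0$, for each $i$ pick $r_i\in(-(1+\beta_i)/q_i,0)$ and set
\begin{equation*}
\gamma_i:=-\frac{r_i}{c_i},\qquad \delta_i:=1+\frac{r_i}{c_i},\qquad K_i(\cdot,\cdot):=\frac{1}{|1-\langle \cdot,\cdot\rangle|^{c_i}},
\end{equation*}
so that $\gamma_i+\delta_i=1$ and (using $\alpha_i=b_i$) $S_{\negthinspace 0,\,\vb,\,\vc}f(z,w)=\int_\bn\int_\bn K_1(z,u)K_2(w,\eta)f(u,\eta)\,dv_{\alpha_1}(u)\,dv_{\alpha_2}(\eta)$. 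For condition \eqref{1e3}, the exponent $c_i\gamma_i=-r_i$ is positive, and \eqref{1e15} gives $|1-\langle z,u\rangle|\ge(1-|z|^2)/2$, hence $|1-\langle z,u\rangle|^{-c_i\gamma_i}\le 2^{-r_i}(1-|z|^2)^{r_i}$, uniformly in $u$. Applying the analogous bound in $(w,\eta)$ yields
\begin{equation*}
\mathop{\esssup}\limits_{(u,\eta)}[K_1(z,u)]^{\gamma_1}[K_2(w,\eta)]^{\gamma_2}h_1(u,\eta)\le C(1-|z|^2)^{r_1}(1-|w|^2)^{r_2}=Ch_2(z,w).
\end{equation*}

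For condition \eqref{1e4} with $h_1\equiv 1$, observe $c_i\delta_i=c_i+r_i$, so the inner integral is $\int_\bn (1-|z|^2)^{r_1q_1+\beta_1}|1-\langle z,u\rangle|^{-(c_1+r_1)q_1}\,dv(z)$. Here $r_1q_1+\beta_1>-1$ by the choice of $r_1$, while the asymptotic exponent $n+1+r_1q_1+\beta_1-(c_1+r_1)q_1=n+1+\beta_1-c_1q_1$ is strictly positive by hypothesis, so Lemma \ref{asymptotic2}(i) gives a uniform bound by a constant; the outer integral in $w$ behaves the same way. Therefore \eqref{1e4} holds with $h_1\equiv 1$, and Proposition \ref{1p2} concludes the proof. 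The only mildly delicate point is the initial reduction to positive $c_i$, since the hypothesis permits arbitrary reals; this is handled by the trivial bound $|1-\langle\cdot,\cdot\rangle|\le 2$ combined with strict inequality in $c_i<(n+1+\beta_i)/q_i$, which guarantees enough room to choose a positive $c_i'$ in the admissible range.
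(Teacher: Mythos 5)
Your proposal is correct and follows essentially the same route as the paper: Proposition \ref{1p2} with $h_1\equiv 1$, $h_2(z,w)=(1-|z|^2)^{r_1}(1-|w|^2)^{r_2}$, $\gamma_i=-r_i/c_i$, $\delta_i=1+r_i/c_i$, verifying \eqref{1e3} via \eqref{1e15} and \eqref{1e4} via Lemma \ref{asymptotic2}(i). The only cosmetic difference is that you reduce to $c_i>0$ at the outset, whereas the paper runs the Schur argument for all $c_i\neq 0$ and only treats $c_i=0$ separately by the same domination trick; both are valid.
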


\begin{proof}
 Suppose $c_i\neq0$ for $i\in \{1,2\}$.
  From the fact $-(1+\beta_i)/q_i<0$ for $i\in \{1,2\}$, we deduce that there exist two negative numbers $r_1$ and $r_2$ such that, for any $i\in \{1,2\}$,
$
    -\frac{1+\beta_i}{q_i}<r_i<0.
$
  For any $i\in \{1,2\}$, let
  $\gamma_i:=-\frac{r_i}{c_i}$ and $\delta_i:=1+\frac{r_i}{c_i}.$
  Obviously, $\gamma_i+\delta_i=1$. We now define
  $ h_1(u,\eta):=1,\ h_2(z,w):=(1-|z|^2)^{r_1}(1-|w|^2)^{r_2},$
  $$ K_1(z,u):=\frac{1}{|1-\langle z,u \rangle|^{c_1}},\ \text{and}\ \ K_2(w,\eta):=\frac{1}{|1-\langle w,\eta \rangle|^{c_2}}.$$
Therefore,
$$S_{\negthinspace 0,\,\vb,\,\vc}f(z,w)=\int_{\bn}\int_{\bn}K_1(z,u)K_2(w,\eta)f(u,\eta)
\,dv_{\az_1}(u)\,dv_{\az_2}(\eta).$$
In order to use Proposition \ref{1p2} to show the boundedness of $S_{\negthinspace 0,\,\vb,\,\vc}$, we next consider
\begin{align*}
[K_1(z,u)]^{\gamma_1}[K_2(w,\eta)]^{\gamma_2}h_1(u,\eta)
    =\frac{1}{|1-\langle z,u\rangle|^{c_1\gamma_1}|1-\langle w,\eta\rangle|^{c_2\gamma_2}}.
\end{align*}
  By \eqref{1e15} and the fact $c_i\gamma=-r_i>0$ for $i\in \{1,2\}$, we know that, for any $z\in \bn$
and $u\in\bn$,
$
\frac{(1-|z|^2)^{-r_1}}{|1-\langle z,u\rangle|^{c_1\gamma_1}}\leq C
$
and, for any $w\in \bn$ and $\eta \in\bn$,
$
\frac{(1-|w|^2)^{-r_2}}{|1-\langle w,\eta\rangle|^{c_2\gamma_2}}\leq C.
$
Therefore,
$$
    \mathop{\esssup}\limits_{(u,\eta)\in \mathbb{B}_n\times \bn}[K_1(z,u)]^{\gamma_1}[K_2(w,\eta)]^{\gamma_2}h_1(u,\eta)\ls h_2(z,w).
$$
Thus, condition \eqref{1e3} holds true for the operator $S_{\negthinspace 0,\,\vb,\,\vc}$.
We next check the second condition \eqref{1e4} of Proposition \ref{1p2}. Observe that
  \begin{align*}
    \int_{\mathbb{B}_n}[K_1(z,u)]^{\delta_1 q_1}[K_2(w,\eta)]^{\delta_2 q_1}[h_2(z,w)]^{q_1}\,dv_{\beta_1}(z)=\frac{(1-|w|^2)^{r_2q_1}}{|1-\langle w,\eta \rangle|^{c_2\delta_2 q_1}}
\int_{\mathbb{B}_n}\frac{(1-|z|^2)^{r_1q_1+\beta_1}}{|1-\langle z,u \rangle|^{c_1\delta_1 q_1}}dv(z).
  \end{align*}
By the definition of $r_i$, we know that, for any $i \in \{1,2\}$,
  \begin{align}\label{1e32}
    r_iq_i+\beta_i>-1.
  \end{align}
  From the choice of $\delta_i$ and the fact that $c_i<(n+1+\beta_i)/q_i$, we deduce that, for any $i \in \{1,2\}$,
  \begin{align}\label{1e33}
    n+1+r_iq_i+\beta_i-c_i\delta_i q_i=q_i\lf(\frac{n+1+\beta_i}{q_i}-c_i\r)>0.
  \end{align}
By using this, \eqref{1e32}, and Lemma \ref{asymptotic2}(i), we infer that, for any given $u\in\bn$,
  \begin{align*}
    \int_{\mathbb{B}_n}\frac{(1-|z|^2)^{r_1q_1+\beta_1}}{|1-\langle z,u \rangle|^{c_1\delta q_1}}dv(z)\leq C,
  \end{align*}
  which, together with \eqref{1e32}, \eqref{1e33}, and Lemma \ref{asymptotic2}(i) again, further implies that
  \begin{align*}
  &\int_{\mathbb{B}_n}\left[\int_{\bn}[K_1(z,u)]^{\delta_1 q_1}[K_2(w,\eta)]^{\delta_2 q_1}[h_2(z,w)]^{q_1}\,dv_{\beta_1}(z) \right]^{q_2/q_1}\,dv_{\beta_2}(w)\\
  &\hs\ls  \int_{\mathbb{B}_n}\frac{(1-|w|^2)^{r_2q_2+\beta_2}}
    {|1-\langle w,\eta\rangle|^{c_2\delta_2 q_2}}\,dv(\eta)\ls [h_1(u,\eta)]^{q_2}.
  \end{align*}
This is just inequality \eqref{1e4} for $S_{\negthinspace 0,\,\vb,\,\vc}$. Thus,
the operator $S_{\negthinspace 0,\,\vb,\,\vc}$ satisfies all conditions of Proposition \ref{1p2}.
Then, applying Proposition \ref{1p2}, we find that $S_{\negthinspace 0,\,\vb,\,\vc}$ is
bounded from $\lv$ to $\lt$ in the case $c_i\neq0$ for any $i\in\{1,2\}$.

If $c_1=0$ and $c_2\neq0$, observe that there exists a positive
number $c_0$ such that $0<c_0<(n+1+\beta_1)/q_1$. Clearly, $|1-\langle z,u\rangle|\ls 1$
for any $z\in\bn$ and $u\in\bn$.
Thus, $1\ls \frac 1{|1-\langle z,u\rangle|^{c_0}}$ and hence
$|S_{\negthinspace 0,\,\vb,\,(0,c_2)}f(z,w)|\ls S_{\negthinspace 0,\,\vb,\,(c_0,c_2)}|f|(z,w).$
Therefore, by the above proof, we conclude that the operator $S_{\negthinspace 0,\,\vb,\,(0,c_2)}$
is also bounded from $\lv$ to $\lt$.
Similarly, if $c_1\neq 0$ and $c_2=0$, then there exists a positive
number $d_0$ such that $0<d_0<(n+1+\beta_2)/q_2$. Clearly, $|1-\langle w,\eta\rangle|\ls 1$
for any $w\in\bn$ and $\eta\in\bn$.
Thus, $1\ls \frac 1{|1-\langle w,\eta\rangle|^{d_0}}$ and hence
$|S_{\negthinspace 0,\,\vb,\,(c_1,0)}f(z,w)|\ls S_{\negthinspace 0,\,\vb,\,(c_1,d_0)}|f|(z,w).$
Thus, in this case, we also find that the operator $S_{\negthinspace 0,\,\vb,\,(c_1,0)}$
is bounded from $\lv$ to $\lt$. Furthermore, when $c_1=c_2=0$, there exist two positive
numbers $c_0$ and $d_0$ as above such that
$|S_{\negthinspace 0,\,\vb,\,(0,0)}f(z,w)|\ls S_{\negthinspace 0,\,\vb,\,(c_0,d_0)}|f|(z,w),$
which, combined with the above proof, implies that the operator $S_{\negthinspace 0,\,\vb,\,(0,0)}$
is bounded from $\lv$ to $\lt$ and hence completes the whole proof of Lemma \ref{1l5}.
\end{proof}

\begin{lemma}\label{1l6}
	Let $1=p_+\leq q_-<\infty$.
     If the parameters satisfy
     \begin{align*}
     \left\{
     \begin{aligned}
     &\alpha_1=b_1,\ \ c_1< \frac{n+1+\beta_1}{q_1},\\
     &\alpha_2<b_2,\ \ c_2\le b_2-
     \alpha_2+\frac{n+1+\beta_2}{q_2},
     \end{aligned}
     \right.
     \end{align*}
     then the operator $S_{\negthinspace 0,\,\vb,\,\vc}$ is
     bounded from $\lv$ to $\lt$.
\end{lemma}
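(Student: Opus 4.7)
The plan is to apply the Schur-type test Proposition~\ref{1p2} (the $\vec p=(1,1)$ version) by blending the choice of test functions used in Lemma~\ref{1l5} for the critical first coordinate $\alpha_1=b_1$ with the choice used in Case 2) of Lemma~\ref{1l1} for the non-critical second coordinate $\alpha_2<b_2$ under $p_2=1$. Since $\alpha_1=b_1$, the first factor of the kernel reduces to $K_1(z,u):=1/|1-\langle z,u\rangle|^{c_1}$, while $K_2(w,\eta):=(1-|\eta|^2)^{b_2-\alpha_2}/|1-\langle w,\eta\rangle|^{c_2}$. I will first treat the principal subcase $c_1\neq 0$ and $c_2=b_2-\alpha_2+(n+1+\beta_2)/q_2$, and then reduce the remaining subcases ($c_1=0$ or $c_2<b_2-\alpha_2+(n+1+\beta_2)/q_2$) to it by the standard dominating-kernel argument used at the end of Lemmas~\ref{1l3} and~\ref{1l5}.

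For the principal subcase, pick negative numbers $r_1,r_2$ with $-(1+\beta_i)/q_i<r_i<0$; and (using $b_2-\alpha_2>0$ together with $\tau_2:=c_2-b_2+\alpha_2=(n+1+\beta_2)/q_2>0$) choose $s_2$ satisfying
\[
-(b_2-\alpha_2)\gamma_2<s_2<(b_2-\alpha_2)\delta_2,
\]
where $\gamma_2:=(s_2-r_2)/\tau_2$ and $\delta_2:=((n+1+\beta_2)/q_2+r_2-s_2)/\tau_2$. For the first variable, set $\gamma_1:=-r_1/c_1$ and $\delta_1:=1+r_1/c_1$ (so $\gamma_i+\delta_i=1$), and define
\[
h_1(u,\eta):=(1-|\eta|^2)^{s_2},\qquad h_2(z,w):=(1-|z|^2)^{r_1}(1-|w|^2)^{r_2}.
\]

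To verify \eqref{1e3}, the $(z,u)$ factor is handled as in Lemma~\ref{1l5}: since $c_1\gamma_1=-r_1>0$, the elementary bound \eqref{1e15} yields $1/|1-\langle z,u\rangle|^{c_1\gamma_1}\ls(1-|z|^2)^{r_1}$ uniformly in $u$. For the $(w,\eta)$ factor, the identity $c_2\gamma_2=(b_2-\alpha_2)\gamma_2+s_2-r_2$ (the analogue of \eqref{1e19}) permits rewriting
\[
\frac{(1-|\eta|^2)^{(b_2-\alpha_2)\gamma_2+s_2}(1-|w|^2)^{-r_2}}{|1-\langle w,\eta\rangle|^{c_2\gamma_2}}=\left(\frac{1-|\eta|^2}{|1-\langle w,\eta\rangle|}\right)^{(b_2-\alpha_2)\gamma_2+s_2}\left(\frac{1-|w|^2}{|1-\langle w,\eta\rangle|}\right)^{-r_2},
\]
whose bases are each $\le 1$ by \eqref{1e15}--\eqref{1e16} and whose exponents are both positive by the constraints on $r_2,s_2$, hence uniformly bounded. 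Multiplying yields the required pointwise estimate by $C\,h_2(z,w)$.

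For \eqref{1e4}, the inner $z$-integral has exponent $n+1+r_1q_1+\beta_1-c_1\delta_1q_1=q_1((n+1+\beta_1)/q_1-c_1)>0$ by the hypothesis $c_1<(n+1+\beta_1)/q_1$, so Lemma~\ref{asymptotic2}(i) bounds it uniformly in $u$. The outer $w$-integral then becomes exactly the integral from Case 2) of Lemma~\ref{1l1} for the second coordinate: the exponent $n+1+r_2q_2+\beta_2-c_2\delta_2q_2=q_2(s_2-(b_2-\alpha_2)\delta_2)<0$, so Lemma~\ref{asymptotic2}(ii) produces a factor $(1-|\eta|^2)^{s_2q_2-(b_2-\alpha_2)\delta_2q_2}$, which cancels against the $(1-|\eta|^2)^{(b_2-\alpha_2)\delta_2q_2}$ coming from $K_2^{\delta_2 q_1}$ to leave exactly $(1-|\eta|^2)^{s_2q_2}=[h_1(u,\eta)]^{q_2}$. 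The main delicate point is simply to observe that the constraint ranges for $r_1$ and for $(r_2,s_2)$ decouple, so all choices are simultaneously feasible; after that, reducing $c_1\le 0$ via $1\ls 1/|1-\langle z,u\rangle|^{c_0}$ (for small $c_0>0$) and $c_2$ strictly below the critical value via $1/|1-\langle w,\eta\rangle|^{c_2}\le 2^{\sigma_2}/|1-\langle w,\eta\rangle|^{c_2+\sigma_2}$ completes the proof.
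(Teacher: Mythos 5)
Your proposal is correct and follows essentially the same route as the paper: the paper also proves this lemma by applying Proposition \ref{1p2} with $K_1(z,u)=|1-\langle z,u\rangle|^{-c_1}$, $K_2(w,\eta)=(1-|\eta|^2)^{b_2-\alpha_2}|1-\langle w,\eta\rangle|^{-c_2}$, $\gamma_1=-r_1/c_1$, and $h_2(z,w)=(1-|z|^2)^{r_1}(1-|w|^2)^{r_2}$, the only difference being that the paper fixes $h_1\equiv 1$ (i.e.\ takes your $s_2=0$, which is admissible since $-(b_2-\alpha_2)\gamma_2<0<(b_2-\alpha_2)\delta_2$), whereas you carry the extra free parameter $s_2$ from Case 2) of Lemma \ref{1l1}; both verifications of \eqref{1e3} and \eqref{1e4} and the final dominating-kernel reductions coincide.
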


\begin{proof}
Suppose $c_1\neq0$. By the fact $-(1+\beta_i)/q_i<0$, we know that there exist two negative numbers $r_1$ and $r_2$ such that, for any $i\in \{1,2\}$,
$
    -\frac{1+\beta_i}{q_i}<r_i<0.
$
Let
  $\gamma_1:=-\frac{r_1}{c_1}\ \text{and}\ \delta_1:=1+\frac{r_1}{c_1}.$
  Obviously, $\gamma_1+\delta_1=1$. Define
$$\lambda_2:=\frac{n+1+\beta_2}{q_2}-(n+1+\alpha_2),\ \ c_2:=n+1+b_2+\lambda_2,\ \ \text{and}\ \  \tau_2:=c_2-b_2+\alpha_2.$$
Then, $\tau_2=\frac{n+1+\beta_2}{q_2}>0$. Let
$\gamma_2:=-\frac{r_2}{\tau_2}\ \text{and}\  \delta_2:=1+\frac{r_2}{\tau_2}.$
Clearly, $\gamma_2+\delta_2=1$.
  We now define
  $ h_1(u,\eta):=1,\ h_2(z,w):=(1-|z|^2)^{r_1}(1-|w|^2)^{r_2},$
  $$ K_1(z,u):=\frac{1}{|1-\langle z,u \rangle|^{c_1}},\ \text{and}\ \ K_2(w,\eta):=\frac{(1-|\eta|^2)^{b_2-\alpha_2}}{|1-\langle w,\eta \rangle|^{c_2}}.$$
Therefore,
$$S_{\negthinspace 0,\,\vb,\,\vc}f(z,w)=\int_{\bn}\int_{\bn}K_1(z,u)K_2(w,\eta)f(u,\eta)
\,dv_{\az_1}(u)\,dv_{\az_2}(\eta).$$
In order to use Proposition \ref{1p2} to show the boundedness of $S_{\negthinspace 0,\,\vb,\,\vc}$, we next consider
  \begin{align*}
    [K_1(z,u)]^{\gamma_1}[K_2(w,\eta)]^{\gamma_2}h_1(u,\eta)=\frac{1}{|1-\langle z,u \rangle|^{c_1\gamma_1}}
 \frac{(1-|\eta|^2)^{(b_2-\alpha_2)\gamma_2}}{|1-\langle w,\eta \rangle|^{c_2\gamma_2}}.
  \end{align*}
From \eqref{1e15} and the fact that $c_1\gamma_1=-r_1>0$, we deduce that, for any $z \in \bn$ and $u\in\bn$,
  \begin{align}\label{1e34}
   \frac{(1-|z|^2)^{-r_1}}{|1-\langle z,u\rangle|^{c_1\gamma}}\leq C.
  \end{align}
Since $\tau_2=c_2-b_2+\alpha_2$ and $\gamma_2=-r_2/\tau_2$, we clearly have
$c_2\gamma_2=(b_2-\alpha_2)\gamma_2-r_2.$
From this, \eqref{1e15}, \eqref{1e16}, and the fact that $(b_2-\alpha_2)\gamma_2>0$, we know that, for any $w\in \bn$ and $\eta\in\bn$,
  \begin{align*}
    \frac{(1-|\eta|^2)^{(b_2-\alpha_2)\gamma_2}(1-|w|^2)^{-r_2}}{|1-\langle w,\eta\rangle|^{c_2\gamma_2}}&=\frac{(1-|\eta|^2)^{(b_2-\alpha_2)\gamma_2}}{|1-\langle w,\eta\rangle|^{(b_2-\alpha_2)\gamma_2}}
\frac{(1-|w|^2)^{-r_2}}{|1-\langle w,\eta\rangle|^{-r_2}}\leq C,
  \end{align*}
which, combined with \eqref{1e34}, further implies that
$$
    \mathop{\esssup}\limits_{(u,\eta)\in \mathbb{B}_n\times \bn}[K_1(z,u)]^{\gamma_1}[K_2(w,\eta)]^{\gamma_2}
h_1(u,\eta)\ls h_2(z,w).
$$
  Thus, condition \eqref{1e3} holds true for the operator $S_{\negthinspace 0,\,\vb,\,\vc}$. 
  We next check the second condition \eqref{1e4}. Observe that
  \begin{align}\label{1x1}
    &\int_{\mathbb{B}_n}[K_1(z,u)]^{\delta_1 q_1}[K_2(w,\eta)]^{\delta_2 q_1}[h_2(z,w)]^{q_1}\,dv_{\beta_1}(z)\noz\\
    &\hs=\frac{(1-|\eta|^2)^{(b_2-\alpha_2)\delta_2 q_1}(1-|w|^2)^{r_2q_1}}{|1-\langle w,\eta \rangle|^{c_2\delta_2 q_1}}\int_{\mathbb{B}_n}\frac{(1-|z|^2)^{r_1q_1+\beta_1}}{|1-\langle z,u \rangle|^{c_1\delta_1 q_1}}\,dv(z).
  \end{align}
By the definition of $r_i$, we obviously have
  \begin{align}\label{1e36}
    r_iq_i+\beta_i>-1.
  \end{align}
In addition, since $\delta_1=1+r_1/c_1$ and $c_1<(n+1+\beta_1)/q_1$, it follows that
  \begin{align*}
    n+1+r_1q_1+\beta_1-c_1\delta_1 q_1=q_1\left(\frac{n+1+\beta_1}{q_1}-c_1 \right)>0.
  \end{align*}
From this, \eqref{1e36}, and Lemma \ref{asymptotic2}(i), we deduce that, for any given $u\in\bn$,
  \begin{align}\label{1e38}
    \int_{\mathbb{B}_n}\frac{(1-|z|^2)^{r_1q_1+\beta_1}}{|1-\langle z,u \rangle|^{c_1\delta q_1}}\,dv(z)
\leq C.
  \end{align}
On the other hand, the choice of $\delta_2$ implies that
$(c_2-b_2+\alpha_2)\delta_2=\tau_2\delta_2=\frac{n+1+\beta_2}{q_2}+r_2$,
which, together with the fact $\delta_2>0$, further implies that
$n+1+r_2q_2+\beta_2-c_2\delta_2q_2=-(b_2-\alpha_2)\delta_2q_2<0.$
Thus, applying this, \eqref{1x1}, \eqref{1e36}, \eqref{1e38}, and Lemma \ref{asymptotic2}(ii), we conclude that
  \begin{align*}
  &\int_{\mathbb{B}_n}\left[\int_{\bn}[K_1(z,u)]^{\delta_1 q_1}[K_2(w,\eta)]^{\delta_2 q_1}[h_2(z,w)]^{q_1}\,dv_{\beta_1}(z) \right]^{q_2/q_1}\,dv_{\beta_2}(w)\\
  &\hs\ls (1-|\eta|^2)^{(b_2-\alpha_2)\delta_2 q_2} \int_{\mathbb{B}_n}\frac{(1-|w|^2)^{r_2q_2+\beta_2}}
    {|1-\langle w,\eta\rangle|^{c_2\delta_2 q_2}}\,dv(\eta)\ls [h_1(u,\eta)]^{q_2}.
  \end{align*}
This is just inequality \eqref{1e4} for $S_{\negthinspace 0,\,\vb,\,\vc}$. Thus,
the operator $S_{\negthinspace 0,\,\vb,\,\vc}$ satisfies all conditions of Proposition \ref{1p2}.
Then, applying Proposition \ref{1p2}, we find that $S_{\negthinspace 0,\,\vb,\,\vc}$ is
bounded from $\lv$ to $\lt$ in the case $c_1\neq0$ and $c_2= n+1+b_2+\lambda_2$.

In addition, by an argument similar to that used in the proof of Lemma \ref{1l4},
we find that the operator $S_{\negthinspace 0,\,\vb,\,\vc}$ is also bounded from $\lv$ to $\lt$ when
$c_1<(n+1+\beta_1)/q_1$ and $c_2\leq b_2-\alpha_2+(n+1+\beta_2)/q_2$, which completes the proof.
\end{proof}

\begin{lemma}\label{1l7}
	Let $1=p_+\leq q_-<\infty$.
     If the parameters satisfy
     \begin{align*}
     \left\{
     \begin{aligned}
     &\alpha_1<b_1,\ \ c_1\le b_1-
     \alpha_1+\frac{n+1+\beta_1}{q_1},\\
     &\alpha_2=b_2,\ \ c_2< \frac{n+1+\beta_2}{q_2},
     \end{aligned}
     \right.
     \end{align*}
     then the operator $S_{\negthinspace 0,\,\vb,\,\vc}$ is
     bounded from $\lv$ to $\lt$.
\end{lemma}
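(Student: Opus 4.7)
The plan is to mirror the argument of Lemma \ref{1l6}, exchanging the roles of the two factors. That is, we use Proposition \ref{1p2} again, but now the first variable plays the role that the second variable did in Lemma \ref{1l6}, and vice versa. Concretely, we first assume $c_2\neq 0$; the case $c_2=0$ will be handled at the end by absorbing a small extra factor $|1-\langle w,\eta\rangle|^{-d_0}$ with $0<d_0<(n+1+\beta_2)/q_2$, exactly as in the closing paragraph of the proof of Lemma \ref{1l5}.

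For the Schur-test data, I would choose negative numbers $r_1,r_2$ with $-(1+\beta_i)/q_i<r_i<0$, then set
\[
\lambda_1:=\frac{n+1+\beta_1}{q_1}-(n+1+\alpha_1),\quad c_1:=n+1+b_1+\lambda_1,\quad \tau_1:=c_1-b_1+\alpha_1=\frac{n+1+\beta_1}{q_1}>0,
\]
and let $\gamma_1:=-r_1/\tau_1$, $\delta_1:=1+r_1/\tau_1$, while $\gamma_2:=-r_2/c_2$ and $\delta_2:=1+r_2/c_2$. Take $h_1\equiv 1$, $h_2(z,w):=(1-|z|^2)^{r_1}(1-|w|^2)^{r_2}$, and
\[
K_1(z,u):=\frac{(1-|u|^2)^{b_1-\alpha_1}}{|1-\langle z,u\rangle|^{c_1}},\qquad K_2(w,\eta):=\frac{1}{|1-\langle w,\eta\rangle|^{c_2}},
\]
so that $S_{\negthinspace 0,\,\vb,\,\vc}f(z,w)=\int\int K_1K_2\,f\,dv_{\alpha_1}\,dv_{\alpha_2}$.

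To verify \eqref{1e3}, I would use the inequality $|1-\langle z,u\rangle|\gtrsim 1-|z|^2$ (and the symmetric one) together with $c_1\gamma_1=(b_1-\alpha_1)\gamma_1-r_1$ and $(b_1-\alpha_1)\gamma_1>0$ to bound
\[
\frac{(1-|u|^2)^{(b_1-\alpha_1)\gamma_1}(1-|z|^2)^{-r_1}}{|1-\langle z,u\rangle|^{c_1\gamma_1}}\le C,\qquad \frac{(1-|w|^2)^{-r_2}}{|1-\langle w,\eta\rangle|^{c_2\gamma_2}}\le C,
\]
giving $\esssup [K_1^{\gamma_1}K_2^{\gamma_2}h_1]\lesssim h_2$. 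For \eqref{1e4}, the condition $c_1\le b_1-\alpha_1+(n+1+\beta_1)/q_1$ together with $\delta_1=1+r_1/\tau_1$ yields $n+1+r_1q_1+\beta_1-c_1\delta_1 q_1<0$, so Lemma \ref{asymptotic2}(ii) gives the $(1-|u|^2)^{s}$-type factor absorbing $(b_1-\alpha_1)\delta_1 q_1$; the condition $c_2<(n+1+\beta_2)/q_2$ plus $\delta_2=1+r_2/c_2$ yields $n+1+r_2q_2+\beta_2-c_2\delta_2 q_2>0$, so Lemma \ref{asymptotic2}(i) bounds the outer $\eta$-integral by a constant, producing the required $[h_1]^{q_2}$ bound.

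No step should be a serious obstacle; the routine care needed is just to keep the asymmetry straight between the two coordinates (only the first has the $(1-|u|^2)^{b_1-\alpha_1}$-weight, and only the second has the strict inequality $c_2<(n+1+\beta_2)/q_2$). Finally, the case $c_1<b_1-\alpha_1+(n+1+\beta_1)/q_1$ is reduced to the borderline case $c_1=b_1-\alpha_1+(n+1+\beta_1)/q_1$ by a domination argument $|1-\langle z,u\rangle|^{-c_1}\le 2^{\sigma_1}|1-\langle z,u\rangle|^{-(c_1+\sigma_1)}$ identical to the one used at the end of the proof of Lemma \ref{1l3}, and the case $c_2=0$ is treated by the domination argument from the proof of Lemma \ref{1l5}. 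Thus the conclusion follows and the proof is complete.
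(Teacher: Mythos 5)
Your proposal is correct and coincides with the paper's intended argument: the paper itself proves Lemma \ref{1l7} by declaring it the symmetric case of Lemma \ref{1l6} with exactly the kernels $K_1(z,u)=(1-|u|^2)^{b_1-\alpha_1}|1-\langle z,u\rangle|^{-c_1}$ and $K_2(w,\eta)=|1-\langle w,\eta\rangle|^{-c_2}$ that you use, and your choices of $\tau_1,\gamma_i,\delta_i,h_1,h_2$ together with the verification of \eqref{1e3} and \eqref{1e4} and the reductions for $c_2=0$ and for $c_1$ below the critical value are precisely the omitted details. No gaps.
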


\begin{proof}
This lemma is a symmetric case of Lemma \ref{1l6}. Therefore, its proof is similar to that of Lemma \ref{1l6}
with the kernel replaced by
  $$ K_1(z,u):=\frac{(1-|u|^2)^{b_1-\az_1}}{|1-\langle z,u \rangle|^{c_1}}\ \text{and}\ \
K_2(w,\eta):=\frac{1}{|1-\langle w,\eta \rangle|^{c_2}},$$ and hence we omit the proof.
\end{proof}

\section{Completing the proofs of Theorems \ref{0t1}, \ref{0t2}, \ref{0t3}, and \ref{0t4} \label{s3}}

In this section, we complete the proofs of our main results by two crucial observations and putting the
pieces together. To begin with, we state the two crucial observations as follows.

\begin{observation}\label{3o1}
The boundedness of $S_{\negthinspace \va,\,\vb,\,\vc}$ implies the boundedness of $T_{\va,\,\vb,\,\vc}$.
\end{observation}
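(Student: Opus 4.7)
The plan is to establish the observation via a simple pointwise domination argument followed by taking mixed-norm $\lt$-norms. The idea is that, since the kernel of $S_{\negthinspace \va,\,\vb,\,\vc}$ differs from that of $T_{\va,\,\vb,\,\vc}$ only by replacing $(1-\langle z,u\rangle)^{c_1}(1-\langle w,\eta\rangle)^{c_2}$ with its modulus $|1-\langle z,u\rangle|^{c_1}|1-\langle w,\eta\rangle|^{c_2}$, and since for any real $c_i$ the identity $|(1-\langle z,u\rangle)^{c_i}| = |1-\langle z,u\rangle|^{c_i}$ holds (using the principal branch away from the negative real axis, which is legitimate since $\operatorname{Re}(1-\langle z,u\rangle) > 0$ for $z,u\in\bn$), one automatically obtains a pointwise majorization.

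More precisely, I would first observe that, for any $f\in\lv$ and any $(z,w)\in\bn\times\bn$, the triangle inequality for integrals gives
\begin{align*}
|T_{\va,\,\vb,\,\vc}f(z,w)|
&\le (1-|z|^2)^{a_1}(1-|w|^2)^{a_2}\int_{\bn}\int_{\bn}\frac{(1-|u|^2)^{b_1}(1-|\eta|^2)^{b_2}}{|1-\langle z,u\rangle|^{c_1}|1-\langle w,\eta\rangle|^{c_2}}|f(u,\eta)|\,d\upsilon(u)\,d\upsilon(\eta)\\
&=S_{\negthinspace \va,\,\vb,\,\vc}|f|(z,w).
\end{align*}
Then I would take the $\lt$ norm of both sides and invoke the assumed boundedness of $S_{\negthinspace \va,\,\vb,\,\vc}$ from $\lv$ to $\lt$ together with the obvious identity $\||f|\|_{\lv}=\|f\|_{\lv}$ to conclude
$$
\|T_{\va,\,\vb,\,\vc}f\|_{\lt}\le \|S_{\negthinspace \va,\,\vb,\,\vc}|f|\|_{\lt}\le \|S_{\negthinspace \va,\,\vb,\,\vc}\|_{\lv\to\lt}\,\|f\|_{\lv}.
$$
Hence $T_{\va,\,\vb,\,\vc}$ is bounded from $\lv$ to $\lt$, with norm no greater than that of $S_{\negthinspace \va,\,\vb,\,\vc}$.

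There is essentially no real obstacle here; the one point worth a sentence of care is the justification of $|(1-\langle z,u\rangle)^{c_i}|=|1-\langle z,u\rangle|^{c_i}$ for real $c_i$ on the region $\{z,u\in\bn\}$, where $1-\langle z,u\rangle$ lies in the open right half-plane by the Cauchy--Schwarz inequality, so the principal branch of the power is well-defined and its modulus is exactly $|1-\langle z,u\rangle|^{c_i}$. Everything else is just the triangle inequality and the monotonicity of the mixed-norm functional $\|\cdot\|_{\lt}$.
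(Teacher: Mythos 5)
Your argument is correct and is exactly the standard pointwise-domination reasoning $|T_{\va,\,\vb,\,\vc}f|\le S_{\negthinspace\va,\,\vb,\,\vc}|f|$ that the paper implicitly relies on (it states this as an Observation without proof). The one detail you flag, namely $|(1-\langle z,u\rangle)^{c_i}|=|1-\langle z,u\rangle|^{c_i}$ via the principal branch on the right half-plane, is handled correctly, so nothing is missing.
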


\begin{observation}\label{3o2}
The operator $S_{\negthinspace \va,\,\vb,\,\vc}$ (resp., $T_{\va,\,\vb,\,\vc}$) is bounded from $\lv$ to $\lt$
if and only if $S_{\negthinspace 0,\,\vb,\,\vc}$ (resp., $T_{0,\,\vb,\,\vc}$) is bounded from 
$\lv$ to $L^{\vec q}_{\vec \beta+\vec a \vec q}$, where $\vec \beta+\vec a \vec q:=(\beta_1+a_1q_1,\beta_2+a_2q_2)$.
\end{observation}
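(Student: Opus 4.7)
The plan is to derive the observation from a pointwise factorization together with a simple absorption of the exterior weights into the target measures.

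First, I would note that directly from the definitions of the operators, for any measurable function $f$ on $\bn\times\bn$ and any $(z,w)\in\bn\times\bn$,
\begin{align*}
S_{\negthinspace\va,\,\vb,\,\vc}f(z,w) = (1-|z|^2)^{a_1}(1-|w|^2)^{a_2}\,S_{\negthinspace 0,\,\vb,\,\vc}f(z,w),
\end{align*}
and the analogous identity holds with $T$ in place of $S$, since the vector $\va$ enters only as an external multiplicative factor that is independent of the inner integration variables $u$ and $\eta$.

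Second, I would insert this identity into the definition of $\|\cdot\|_{\lt}$ and absorb the extra powers of $1-|z|^2$ and $1-|w|^2$ into the weights on the target side. Using $d\upsilon_{\beta_i}(x)=c_{\beta_i}(1-|x|^2)^{\beta_i}\,d\upsilon(x)$, the factors $(1-|z|^2)^{a_1 q_1}$ and $(1-|w|^2)^{a_2 q_2}$ produced by raising to the $q_1$-th and then the $q_2$-th power combine with $d\upsilon_{\beta_1}$ and $d\upsilon_{\beta_2}$ respectively to yield, up to the constant $c_{\beta_1}c_{\beta_2}/(c_{\beta_1+a_1q_1}c_{\beta_2+a_2q_2})$, the measures $d\upsilon_{\beta_1+a_1 q_1}$ and $d\upsilon_{\beta_2+a_2 q_2}$. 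Hence there is a positive constant $C=C(\va,\vt,\vq)$ such that, for every measurable $f$,
\begin{align*}
\|S_{\negthinspace\va,\,\vb,\,\vc}f\|_{\lt} = C\,\|S_{\negthinspace 0,\,\vb,\,\vc}f\|_{L^{\vq}_{\vt+\va\vq}},
\end{align*}
and the analogous equality for $T_{\va,\,\vb,\,\vc}$ and $T_{0,\,\vb,\,\vc}$. Taking the supremum of both sides over the unit ball of $\lv$ then immediately yields the claimed equivalence of $\lv\to\lt$ boundedness and $\lv\to L^{\vq}_{\vt+\va\vq}$ boundedness.

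There is essentially no analytical obstacle; the argument is just a pointwise identity followed by a change of measure on the target side. The only bookkeeping matter worth flagging is that the resulting weight vector $\vt+\va\vq$ must satisfy $\beta_i+a_iq_i>-1$ for $i\in\{1,2\}$ in order that $L^{\vq}_{\vt+\va\vq}$ be an admissible weighted mixed-norm Lebesgue space in the sense of the paper's definition. But this is exactly the condition $-q_ia_i<\beta_i+1$ that already appears as a necessary hypothesis in Theorems \ref{0t1}--\ref{0t4}, so the observation will be applied consistently when assembling the final proofs.
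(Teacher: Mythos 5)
Your argument is correct and is exactly the justification the paper has in mind: the paper states Observation \ref{3o2} without proof, treating as evident the pointwise factorization $S_{\negthinspace\va,\,\vb,\,\vc}f=(1-|z|^2)^{a_1}(1-|w|^2)^{a_2}S_{\negthinspace 0,\,\vb,\,\vc}f$ and the absorption of these factors into the target weights. Your additional remark that $\beta_i+a_iq_i>-1$ is needed for $L^{\vq}_{\vt+\va\vq}$ to be admissible, and that this is guaranteed by the hypothesis $-q_ia_i<\beta_i+1$ wherever the observation is invoked, is a correct and worthwhile piece of bookkeeping.
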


We now show Theorems \ref{0t1}, \ref{0t2}, \ref{0t3}, and \ref{0t4} respectively.

\begin{proof}[Proof of Theorem \ref{0t1}]
Obviously, by Observation \ref{3o1}, we know that (i) implies (ii).
Combining Lemma \ref{1l1} and Observation \ref{3o2}, we conclude that
the sufficient conditions of $\lv-\lt$ boundedness of $S_{\negthinspace \va,\,\vb,\,\vc}$ are
\begin{align}\label{3e1}
   \left\{
   \begin{aligned}
   &-q_i a_i<\beta_i+1,\ \ \alpha_i+1<p_i(b_i+1),\\
   &c_i\le n+1+a_i+b_i+\frac{n+1+\beta_i}{q_i}-
   \frac{n+1+\alpha_i}{p_i},
   \end{aligned}
   \right.
\end{align}
which implies that (iii) implies (i). On the other hand, from Observation \ref{3o2} and
Lemmas \ref{2l1} and \ref{2l7}, we infer that \eqref{3e1} are also the necessary conditions of
$\lv-\lt$ boundedness of $T_{\va,\,\vb,\,\vc}$. Thus, (ii) implies (iii) and hence Theorem \ref{0t1}
holds true.
\end{proof}

\begin{proof}[Proof of Theorem \ref{0t2}]
From Observation \ref{3o1}, it follows that (i) implies (ii).
Applying Lemma \ref{1l4} and Observation \ref{3o2}, we find that
the sufficient conditions of $\lv-\lt$ boundedness of $S_{\negthinspace \va,\,\vb,\,\vc}$ are
\begin{align}\label{3e2}
 \left\{
     \begin{aligned}
     &-q_i a_i<\beta_i+1,\ \ \alpha_1=b_1,\ \ c_1< a_1+\frac{n+1+\beta_1}{q_1}, \\
     &\alpha_2+1<p_2(b_2+1),\ \ c_2\le n+1+a_2+b_2+\frac{n+1+\beta_2}{q_2}-
     \frac{n+1+\alpha_2}{p_2}.
     \end{aligned}
     \right.
\end{align}
Therefore, (iii) implies (i). In addition, by using Observation \ref{3o2} and
Lemmas \ref{2l5} and \ref{2l11}, we conclude that \eqref{3e2} are also the necessary conditions of
$\lv-\lt$ boundedness of $T_{\va,\,\vb,\,\vc}$. Thus, (ii) implies (iii) and hence the proof of
Theorem \ref{0t1} is completed.
\end{proof}

\begin{proof}[Proof of Theorem \ref{0t3}]
As a consequence of Observation \ref{3o1}, we obtain that (i) implies (ii).
By Lemma \ref{1l3} and Observation \ref{3o2}, we know that
the sufficient conditions of $\lv-\lt$ boundedness of $S_{\negthinspace \va,\,\vb,\,\vc}$ are
\begin{align}\label{3e3}
    \left\{
     \begin{aligned}
     &-q_i a_i<\beta_i+1,\ \ \alpha_1+1<p_1(b_1+1),\ \ c_1\le n+1+a_1+b_1+\frac{n+1+\beta_1}{q_1}-
     \frac{n+1+\alpha_1}{p_1}, \\
     &\alpha_2=b_2,\ \ c_2<a_2+\frac{n+1+\beta_2}{q_2}.
     \end{aligned}
     \right.
\end{align}
This implies that (iii) implies (i). On the other hand, from Observation \ref{3o2} and
Lemmas \ref{2l2} and \ref{2l8}, we deduce that \eqref{3e3} are also the necessary conditions of
$\lv-\lt$ boundedness of $T_{\va,\,\vb,\,\vc}$. Thus, (ii) implies (iii) and hence Theorem \ref{0t3}
holds true.
\end{proof}

\begin{proof}[Proof of Theorem \ref{0t4}]
By Observation \ref{3o1}, we find that (i) implies (ii).
Applying Lemma \ref{1l1} with $p_+=1$, and Observation \ref{3o2}, we know that if
     \begin{align*}
     \left\{
     \begin{aligned}
     &-q_i a_i<\beta_i+1,\ \ \alpha_i<b_i,\\
     &c_i\le a_i+b_i-\alpha_i+\frac{n+1+\beta_i}{q_i}
     \end{aligned}
     \right.
     \end{align*}
for any $i\in\{1,2\}$, then the operator $S_{\negthinspace \va,\,\vb,\,\vc}$ is bounded from $\lv$ to $\lt$. 	
In addition, from Lemma \ref{1l5} and Observation \ref{3o2}, it follows that
\eqref{0e2} is one of the sufficient condition of the $\lv-\lt$ boundedness of $S_{\negthinspace \va,\,\vb,\,\vc}$.
By using Lemma \ref{1l6} and Observation \ref{3o2}, we conclude that
\eqref{0e3} is one of the sufficient condition of the $\lv-\lt$ boundedness of the operator
$S_{\negthinspace \va,\,\vb,\,\vc}$.
From Lemma \ref{1l7} and Observation \ref{3o2}, we infer that
\eqref{0e4} is also one of the sufficient condition of the $\lv-\lt$ boundedness
of $S_{\negthinspace \va,\,\vb,\,\vc}$. Thus, (iii) implies (i).

On the other hand, by using Observation \ref{3o2} and Lemmas \ref{2l6} and \ref{2l7},
we conclude that \eqref{0e1} is one of the necessary conditions of the
$\lv-\lt$ boundedness of $T_{\va,\,\vb,\,\vc}$.
Applying Observation \ref{3o2} and Lemmas \ref{2l6} and \ref{2l12},
we know that \eqref{0e2} is one of the necessary conditions of the
$\lv-\lt$ boundedness of $T_{\va,\,\vb,\,\vc}$.
By Observation \ref{3o2} and Lemmas \ref{2l6} and \ref{2l11},
we find that \eqref{0e3} is one of the necessary conditions of the
$\lv-\lt$ boundedness of $T_{\va,\,\vb,\,\vc}$.
Combining Observation \ref{3o2} and Lemmas \ref{2l6} and \ref{2l8},
we conclude that \eqref{0e4} is one of the necessary conditions of the
$\lv-\lt$ boundedness of $T_{\va,\,\vb,\,\vc}$.
Therefore, (ii) implies (iii) and hence the proof of Theorem \ref{0t4} is finished.
\end{proof}

\section{Multiparameter Bergman projection and Berezin transform\label{s4}}

This section is devoted to specifying two important special cases of our
main theorems. More precisely, we apply the main results to the following
weighted multiparameter Bergman projection and the weighted multiparameter Berezin transform. Let
$\vec \gamma:=(\gamma_1,\gamma_2)\in(-1,\fz)^2$, the weighted multiparameter
Bergman projection $P_{\vec \gamma}$ is defined by
\begin{align*}
P_{\vec \gamma}f(z,w):=\int_{\bn}\int_{\bn}\frac{f(u,\eta)}{(1-\langle z,u \rangle)^{n+1+\gamma_1}(1-\langle w,\eta \rangle)^{n+1+\gamma_2}}\,d\upsilon_{\gamma_1}(u)\,d\upsilon_{\gamma_2}(\eta)
\end{align*}
and the weighted multiparameter Berezin transform $B_{\vec \gamma}$ is given by
\begin{align*}
B_{\vec \gamma}f(z,w)=
\int_{\mathbb{B}_n}\int_{\mathbb{B}_n}\frac{(1-|z|^2)^{n+1+\gamma_1}(1-|w|^2)^{n+1+\gamma_2}}{|1-\langle z,u \rangle|^{2(n+1+\gamma_1)}|1-\langle w,\eta \rangle|^{2(n+1+\gamma_2)}}f(u,\eta)\,d\upsilon_{\gamma_1}(u)\,d\upsilon_{\gamma_2}(\eta).
\end{align*}
For any given $\vp:=(p_1,p_2)\in(0,\fz)^2$ and $\vec{\az}:=(\az_1,\az_2)\in(-1,\fz)^2$,
the weighted mixed-norm Bergman space
$A^{\vp}_{\vec{\az}}:=A^{\vp}(\bn\times\bn,\,d\upsilon_{\alpha_1}\times d\upsilon_{\alpha_2})$
with the same norm as $\lv$ consists of holomorphic functions $f$ in $\lv$, that is,
$$A^{\vp}_{\vec{\az}}:={ H}(\bn\times\bn)\bigcap\lv,$$
where ${H}(\bn\times\bn)$ denotes all the holomorphic functions in $\bn\times\bn$.
It is clear that $A^{\vp}_{\vec{\az}}$ is a linear subspace of $\lv$, and
we can show $P_{\vec \gamma}f\in {H}(\bn\times\bn)$ as follows.

\begin{proposition}\label{4p2'}
Let $\vec{\gamma}:=(\gamma_1,\gamma_2)\in(-1,\fz)^2$ and $f\in L^{\vec{1}}_{\vec{\gamma}}$.
Then the weighted multiparameter
Bergman projection $P_{\vec \gamma}f$ is holomorphic.
\end{proposition}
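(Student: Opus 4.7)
The plan is to prove that $P_{\vec\gamma} f$ is holomorphic on $\bn\times\bn$ by the standard parameter-dependent integral argument: show the kernel is holomorphic in $(z,w)$ for each fixed $(u,\eta)$, produce an integrable majorant that is uniform on compact subsets of $\bn\times\bn$, and then pass holomorphy from the kernel to the integral by either differentiation under the integral sign or Morera's theorem. The key observation is that the kernel
\[
K(z,w;u,\eta):=\frac{1}{(1-\langle z,u\rangle)^{n+1+\gamma_1}(1-\langle w,\eta\rangle)^{n+1+\gamma_2}}
\]
is holomorphic in $(z,w)\in\bn\times\bn$ for each fixed $(u,\eta)\in\bn\times\bn$, since both inner products are holomorphic in $(z,w)$ and the denominators never vanish on $\bn\times\bn$.

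First, I would fix an arbitrary compact subset $E\subset\bn\times\bn$ and choose $r\in(0,1)$ such that $|z|,|w|\le r$ whenever $(z,w)\in E$. By the Cauchy--Schwarz inequality,
\[
|1-\langle z,u\rangle|\ge 1-|z||u|\ge 1-r\quad\text{and}\quad |1-\langle w,\eta\rangle|\ge 1-r
\]
uniformly for $(z,w)\in E$ and every $(u,\eta)\in\bn\times\bn$, which yields the uniform bound $|K(z,w;u,\eta)|\le (1-r)^{-(2n+2+\gamma_1+\gamma_2)}$. Since $f\in L^{\vec 1}_{\vec\gamma}$, the integrand $K(z,w;u,\eta)\,f(u,\eta)$ is dominated on $E\times(\bn\times\bn)$ by the integrable majorant $(1-r)^{-(2n+2+\gamma_1+\gamma_2)}|f(u,\eta)|$. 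In particular, $P_{\vec\gamma} f$ is everywhere defined on $\bn\times\bn$, and the dominated convergence theorem yields its continuity on $E$ and hence on $\bn\times\bn$.

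Second, to upgrade continuity to holomorphy, I would apply Morera's theorem separately in each complex coordinate and then invoke Hartogs' theorem on separate holomorphy. Concretely, fixing all variables other than, say, $z_j$ and taking an arbitrary closed triangle $T$ in the corresponding one-dimensional slice of $\bn$, Fubini's theorem (justified by the majorant above, since $T$ sits inside some compact disc on which the majorant is still integrable in $(u,\eta)$) lets me interchange $\oint_T$ with the integral against $d\upsilon_{\gamma_1}\times d\upsilon_{\gamma_2}$. The inner contour integral $\oint_T K(z,w;u,\eta)\,dz_j$ vanishes by Cauchy's theorem, so $\oint_T P_{\vec\gamma} f\,dz_j=0$, and Morera's theorem gives holomorphy in $z_j$. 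Running this argument coordinate by coordinate for $z_1,\dots,z_n,w_1,\dots,w_n$ yields separate holomorphy, and Hartogs' theorem then promotes this to joint holomorphy on $\bn\times\bn$. An entirely equivalent route is to differentiate under the integral sign with respect to each $\bar z_j$ and $\bar w_j$, again justified by the same majorant applied to the bounded partial derivatives of $K$ on $E$, and thereby verify $\bar\partial P_{\vec\gamma} f=0$.

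No step is a genuine obstacle; the argument is routine and rests entirely on the uniform kernel bound on compact subsets, which in turn is immediate from the trivial estimate $|1-\langle z,u\rangle|\ge 1-|z||u|$. If anything requires care, it is merely bookkeeping for the Fubini interchange in the Morera step, which is handled by the single integrable majorant constructed above.
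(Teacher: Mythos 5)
Your proof is correct, and it reaches the conclusion by a genuinely different (though equally standard) route from the paper's. The paper works directly with difference quotients: for each coordinate $z_j$ it writes $\bigl(P_{\vec\gamma}f(z^{(j)}(\xi),w)-P_{\vec\gamma}f(z^{(j)}(\zeta),w)\bigr)/(\xi-\zeta)$ as an integral against $f$, bounds the difference quotient of the kernel uniformly for $\xi$ near $\zeta$ using holomorphy of the kernel and the distance of the compact slice to the boundary, and passes to the limit by dominated convergence; this is essentially your ``equivalent route'' of differentiating under the integral sign, carried out by hand. Your primary route via Morera plus Fubini is cleaner in one respect: it needs only the uniform bound on the kernel itself over compact subsets of $\bn\times\bn$ (which, as you say, follows from $|1-\langle z,u\rangle|\ge 1-|z||u|\ge 1-r$ and $n+1+\gamma_i>0$), and never has to estimate derivatives or difference quotients of the kernel. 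The paper's route buys a slightly more self-contained argument that does not appeal to Morera or Hartogs. Two small remarks: the paper explicitly \emph{defines} holomorphy on $\bn\times\bn$ as separate holomorphy in each of the $2n$ coordinates, so your final appeal to Hartogs is not needed for the statement as the authors interpret it; and since you have already established continuity of $P_{\vec\gamma}f$ from the same majorant, Osgood's lemma (continuity plus separate holomorphy) would in any case suffice in place of the full Hartogs theorem.
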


\begin{proof}
We first recall the definition of holomorphic function on the domain $\bn\times\bn\subset \mathbb{C}^{2n}$.
Indeed, a function $g:\,\bn\times\bn\to \mathbb{C}$ is holomorphic if for any $j\in \{1,\ldots,2n\}$
and, for any fixed $z_1,\ldots,z_{j-1},z_{j+1},\ldots,z_{2n}$ the function
$\zeta \mapsto g(z_1,\ldots,z_{j-1},\zeta,z_{j+1},\ldots,z_{2n})$
is analytic, in the classical one-variable sense on the set
$\{\zeta \in \mathbb{C}:(z_1,\ldots,z_{j-1},\zeta,z_{j+1},\ldots,z_{2n})\in \bn\times\bn \}.$
In other words, we have to show that $g$ is holomorphic in each variable separately.
Without loss of generality, we next assume  $j\in \{1,\ldots,n\}$.
Then given any $z,w,u,\eta \in\bn$ and $\zeta,\xi \in \mathbb{C}$, for short, let
  $$z^{(j)}(\zeta):=(z_1,\ldots,z_{j-1},\zeta,z_{j+1},\cdots,z_{n})$$
  and
   $$G_1(z,u):=\frac{1}{(1-\langle z,u\rangle)^{\gamma_1}}, \quad  G_2(w,\eta):=\frac{1}{(1-\langle w,\eta\rangle)^{\gamma_2}}.$$
  For any $j\in\{1,\ldots,n\}$ and any fixed $z_1,\ldots,z_{j-1},z_{j+1},\ldots,z_{n},w_1,\ldots,w_n$, we know that
  \begin{align}\label{1}
    &\frac{P_{\vg}f(z^{(j)}(\xi),w)-P_{\vg}f(z^{(j)}(\zeta),w)}{\xi-\zeta}\noz\\
    &\hs=\int_{\bn}\int_{\bn}f(u,\eta)G_2(w,\eta)
    \frac{G_1(z^{(j)}(\xi),u)-G_1(z^{(j)}(\zeta),u)}{\xi-\zeta}\,dv_{\gamma_1}(u)\,dv_{\gamma_2}(\eta).
  \end{align}
  Since $G_1(\cdot,u)$ is holomorphic, we have
  \begin{align*}
    &\lim\limits_{\xi \to \zeta}\left|\frac{G_1(z^{(j)}(\xi),u)-G_1(z^{(j)}(\zeta),u)}{\xi-\zeta}\right|\notag\\
    &\hs=\left|\frac{\partial G_1(z^{(j)}(\cdot),u)}{\partial z_j}(z^{(j)}(\zeta),u)\right|
    =\left|\frac{\gamma_1\overline{u_j}}{(1-\langle z^{(j)}(\zeta),u\rangle)^{\gamma_1+1}}\right|
    \leq \frac{\gamma_1}{(1-|z^{(j)}(\zeta)|)^{\gamma_1+1}}.
  \end{align*}
  It follows that for a sufficiently small $\varepsilon>0$, there exists a positive constant $C=C(z^{(j)}(\zeta))$ such that, for any $\xi$ satisfying $|\xi-\zeta|<\varepsilon$ and any $u\in\bn$,
  \begin{align}\label{3}
    \left|\frac{G_1(z^{(j)}(\xi),u)-G_1(z^{(j)}(\zeta),u)}{\xi-\zeta}\right|\leq C.
  \end{align}
It is easy to see that there exists a constant $C=C(w)$ such that, for any $\eta\in\bn$,
$|G_2(w,\eta)|\leq C.$
From this and \eqref{3}, we deduce that, there exists a constant $C=C(z^{(j)}(\zeta))C(w)$ such that, for any $\xi$ satisfying $|\xi-\zeta|<\varepsilon$, and any $u,\eta\in\bn$,
\begin{align*}
  \left|f(u,\eta)G_2(w,\eta)\frac{G_1(z^{(j)}(\xi),u)-G_1(z^{(j)}(\zeta),u)}{\xi-\zeta}\right|\leq C|f(u,\eta)|\in L^1(\bn\times \bn,dv_{\gamma_1}dv_{\gamma_2}).
\end{align*}
This, together with \eqref{1} and the dominated convergence theorem, further implies that
  \begin{align*}
    &\lim\limits_{\xi \to \zeta}\frac{P_{\vg}f(z^{(j)}(\xi),w)-P_{\vg}f(z^{(j)}(\zeta),w)}{\xi-\zeta}\\
    &=\int_{\bn}\int_{\bn}f(u,\eta)G_2(w,\eta)\lim\limits_{\xi \to \zeta}\frac{G_1(z^{(j)}(\xi),u)-G_1(z^{(j)}(\zeta),u)}{\xi-\zeta}\,dv_{\gamma_1}(u)\,dv_{\gamma_2}(\eta)\\
    &=\int_{\bn}\int_{\bn}f(u,\eta)G_2(w,\eta)\frac{\partial G_1(z^{(j)}(\cdot),u)}{\partial z_j}(z^{(j)}(\zeta),u)\,dv_{\gamma_1}(u)\,dv_{\gamma_2}(\eta),
  \end{align*}
which implies the function
$\zeta \mapsto P_{\vg}f(z^{(j)}(\zeta),w)$
is analytic and hence completes the proof.
\end{proof}

Via weighted multiparameter Bergman projection, we have the following integral representation for
functions in $A^1_{\vec{\gamma}}$, which is a extension of \cite[Theorem 2.2]{zbook05}.

\begin{proposition}\label{4p1}
If $\vec{\gamma}:=(\gamma_1,\gamma_2)\in(-1,\fz)^2$ and $f\in A^{\vec 1}_{\vec{\gamma}}$, then
\begin{align*}
f(z,w)=\int_{\bn}\int_{\bn}\frac{f(u,\eta)}{(1-\langle z,u \rangle)^{n+1+\gamma_1}(1-\langle w,\eta \rangle)^{n+1+\gamma_2}}\,d\upsilon_{\gamma_1}(u)\,d\upsilon_{\gamma_2}(\eta)
\end{align*}
for all $z\in \bn$ and $w\in\bn.$
\end{proposition}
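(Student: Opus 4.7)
The plan is to reduce to the one-variable reproducing formula of \cite[Theorem 2.2]{zbook05} twice, via Fubini's theorem, and then upgrade the resulting a.e.\ identity to a pointwise one using holomorphicity.

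First I would invoke Fubini: since $f\in L^{\vec 1}_{\vec\gamma}$, for a.e.\ $w\in \bn$ the slice $f(\cdot,w)$ lies in $L^1(\bn,dv_{\gamma_1})$; being a slice of a holomorphic function on $\bn\times\bn$, it is also in $H(\bn)$, so $f(\cdot,w)\in A^1_{\gamma_1}$ for a.e.\ $w$. By the same reasoning, $f(u,\cdot)\in A^1_{\gamma_2}$ for a.e.\ $u\in\bn$. Applying the classical reproducing formula \cite[Theorem 2.2]{zbook05} to each good slice, I obtain, for a.e.\ $w$ and all $z\in\bn$,
\begin{equation*}
f(z,w)=\int_{\bn}\frac{f(u,w)}{(1-\langle z,u\rangle)^{n+1+\gamma_1}}\,dv_{\gamma_1}(u),
\end{equation*}
and for a.e.\ $u$ and all $w\in\bn$,
\begin{equation*}
f(u,w)=\int_{\bn}\frac{f(u,\eta)}{(1-\langle w,\eta\rangle)^{n+1+\gamma_2}}\,dv_{\gamma_2}(\eta).
\end{equation*}

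Next I would substitute the second identity into the first. Since the exceptional set in the second identity is $dv_{\gamma_1}$-null in $u$, the substitution is legitimate inside the outer integral. To swap the order of integration I would apply Fubini's theorem: for each fixed $(z,w)\in\bn\times\bn$ the kernel $|1-\langle z,u\rangle|^{-(n+1+\gamma_1)}|1-\langle w,\eta\rangle|^{-(n+1+\gamma_2)}$ is bounded above by a constant depending only on $(z,w)$ (using $|1-\langle z,u\rangle|\ge 1-|z|$ and $|1-\langle w,\eta\rangle|\ge 1-|w|$), and $f\in L^{\vec 1}_{\vec\gamma}$, so the integrability of the modulus is automatic. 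This yields, for a.e.\ $w$ and all $z$,
\begin{equation*}
f(z,w)=\int_{\bn}\int_{\bn}\frac{f(u,\eta)\,dv_{\gamma_1}(u)\,dv_{\gamma_2}(\eta)}{(1-\langle z,u\rangle)^{n+1+\gamma_1}(1-\langle w,\eta\rangle)^{n+1+\gamma_2}}=P_{\vec\gamma}f(z,w).
\end{equation*}

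Finally I would upgrade this from ``a.e.\ $w$'' to ``every $w$''. By hypothesis $f$ is holomorphic, hence continuous, on $\bn\times\bn$, and by Proposition \ref{4p2'} so is $P_{\vec\gamma}f$. Fixing $z\in\bn$, the two continuous functions $w\mapsto f(z,w)$ and $w\mapsto P_{\vec\gamma}f(z,w)$ agree on a dense (in fact full-measure) subset of $\bn$, hence agree for every $w\in\bn$. This gives the desired identity at every $(z,w)\in\bn\times\bn$.

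The only real obstacle is the bookkeeping: ensuring that the ``a.e.'' slices where each one-variable formula applies are handled consistently, and that the Fubini swap is legitimate. Both are dispatched by the $L^{\vec 1}_{\vec\gamma}$ hypothesis and the boundedness of the kernels for fixed $(z,w)$; the final ``a.e.\ to everywhere'' step is immediate from joint continuity of the two holomorphic functions.
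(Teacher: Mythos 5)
Your proposal is correct and follows essentially the same route as the paper's proof: slice-wise application of the one-variable reproducing formula from \cite[Theorem 2.2]{zbook05} (justified by Fubini and the boundedness of the kernels for fixed $(z,w)$), followed by an upgrade from an almost-everywhere identity to a pointwise one using the holomorphicity of $P_{\vec\gamma}f$ established in Proposition \ref{4p2'}. The only cosmetic difference is the order in which the two variables are reproduced, which does not affect the argument.
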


\begin{proof}
By the Fubini theorem, we know that, for almost every $\eta \in \bn$, $f(\cdot,\eta) \in L^1(\bn,dv_{\gamma_1})$. It follows that, for almost every $\eta \in \bn$, $f(\cdot,\eta)\in A^1_{\gamma_1}$. From \cite[Theorem 2.2]{zbook05}, we deduce that, for $z \in \bn$ and almost every $\eta \in \bn$,
  \begin{align}\label{6}
    \int_{\bn}\frac{f(u,\eta)}{(1-\langle z,u\rangle)^{n+1+\gamma_1}}\,dv_{\gamma_1}(u)=f(z,\eta).
  \end{align}
  Similarly, by the Fubini theorem, we know that, for almost every $z \in \bn$, $f(z,\cdot) \in L^1(\bn,dv_{\gamma_2})$. It follows that, for almost every $z \in \bn$,$f(z,\cdot)\in A^1_{\gamma_2}$. Applying \cite[Theorem 2.2]{zbook05}, we conclude that, for $w\in \bn$ and almost every $z \in \bn$,
  \begin{align}\label{7}
    \int_{\bn}\frac{f(z,\eta)}{(1-\langle w,\eta\rangle)^{n+1+\gamma_2}}\,dv_{\gamma_2}(\eta)=f(z,w).
  \end{align}
  It is easy to see that, there is a constant $C=C(z,w)$ such that
  $$\left|\frac{f(u,\eta)}{(1-\langle z,u\rangle)^{n+1+\gamma_1}(1-\langle w,\eta\rangle)^{n+1+\gamma_2}}\right| \leq C|f(u,\eta)|\in L^1(\bn\times \bn,dv_{\gamma_1}dv_{\gamma_2}),$$
  which, together with the Fubini theorem, \eqref{6}, and \eqref{7}, implies that, for almost every $z,w \in \bn$,
  \begin{align}\label{8}
     f(z,w)=\int_{\bn}\int_{\bn}\frac{f(u,\eta)}{(1-\langle z,u\rangle)^{n+1+\gamma_1}(1-\langle w,\eta\rangle)^{n+1+\gamma_2}}\,dv_{\gamma_1}(u)\,dv_{\gamma_2}(\eta).
  \end{align}
  By Proposition 5.1, we know that the integral of \eqref{8} is a holomorphic function.
Thus, \eqref{8} holds for any $z,w\in \bn$, and hence we finish the proof.
\end{proof}

Moreover, it is clear that $P_{\vec \gamma}=T_{(0,0),(\gamma_1,\gamma_2),(n+1+\gamma_1,n+1+\gamma_2)}$
and $$B_{\vec \gamma}=S_{\negthinspace (n+1+\gamma_1,n+1+\gamma_2),\,(\gamma_1,\gamma_2),2(n+1+\gamma_1,n+1+\gamma_2)}.$$
Therefore, as special cases of Theorems \ref{0t1}, \ref{0t2}, \ref{0t3}, and \ref{0t4}, 
we immediately obtain the following characterizations
of the $\lv-\lt$ boundedness of the Berezin transform $B_{\vec \gamma}$.

\begin{proposition}\label{4p2}
Suppose $\vp:=(p_1,p_2)$, $\vq:=(q_1,q_2)\in[1,\fz)^2$, $p_+:=\max\{p_1,p_2\}$, $p_-:=\min\{p_1,p_2\}$, and $q_-:=\min\{q_1,q_2\}$.
\begin{enumerate}
\item[{\rm(i)}]
When $1<p_-\le p_+\le q_-<\fz$, then $B_{\vec \gamma}$ is bounded from $\lv$ to $\lt$ if and only if, for any $i\in\{1,2\}$,
\begin{align*}
   \left\{
   \begin{aligned}
   &\alpha_i+1<p_i(\gamma_i+1),\\
   &\frac{n+1+\alpha_i}{p_i}\le\frac{n+1+\beta_i}{q_i}.
   \end{aligned}
   \right.
\end{align*}

\item[{\rm(ii)}]
When $\vp:=(1,p_2)$ and $\vq:=(q_1,q_2)$ satisfying $1<p_2\le q_-<\fz$, then
$B_{\vec \gamma}$ is bounded from $\lv$ to $\lt$ if and only if
\begin{align*}
     \left\{
     \begin{aligned}
     &\alpha_1=\gamma_1,\ \ n+1+\gamma_1< \frac{n+1+\beta_1}{q_1}, \\
     &\alpha_2+1<p_2(\gamma_2+1),\ \ \frac{n+1+\alpha_2}{p_2}\le \frac{n+1+\beta_2}{q_2}.
     \end{aligned}
     \right.
     \end{align*}

\item[{\rm(iii)}]
When $\vp:=(p_1,1)$ and $\vq:=(q_1,q_2)$ satisfying $1<p_1\le q_-<\fz$, then
$B_{\vec \gamma}$ is bounded from $\lv$ to $\lt$ if and only if
\begin{align*}
     \left\{
     \begin{aligned}
     &\alpha_1+1<p_1(\gamma_1+1),\ \ \frac{n+1+\alpha_1}{p_1}\le \frac{n+1+\beta_1}{q_1},\\
     &\alpha_2=\gamma_2,\ \ n+1+\gamma_2<\frac{n+1+\beta_2}{q_2}.
     \end{aligned}
     \right.
     \end{align*}

\item[{\rm(iv)}]
When $\vp:=(1,1)$ and $\vq:=(q_1,q_2)\in[1,\fz)^2$, then
$B_{\vec \gamma}$ is bounded from $\lv$ to $\lt$ if and only if, for any $i\in\{1,2\}$,
\begin{align*}
     \left\{
     \begin{aligned}
     &\alpha_i<\gamma_i,\\
     &n+1+\alpha_i\le \frac{n+1+\beta_i}{q_i},
     \end{aligned}
     \right.
\end{align*}
or
\begin{align*}
     \left\{
     \begin{aligned}
     &\alpha_i=\gamma_i,\\
     &n+1+\gamma_i<\frac{n+1+\beta_i}{q_i},
     \end{aligned}
     \right.
\end{align*}
or
\begin{align*}
     \left\{
     \begin{aligned}
     &\alpha_1=\gamma_1,\ \ n+1+\gamma_1< \frac{n+1+\beta_1}{q_1},\\
     &\alpha_2<\gamma_2,\ \ n+1+\alpha_2\le \frac{n+1+\beta_2}{q_2},
     \end{aligned}
     \right.
\end{align*}
or
\begin{align*}
     \left\{
     \begin{aligned}
     &\alpha_1<\gamma_1,\ \ n+1+\alpha_1\le \frac{n+1+\beta_1}{q_1},\\
     &\alpha_2=\gamma_2,\ \ n+1+\gamma_2< \frac{n+1+\beta_2}{q_2}.
     \end{aligned}
     \right.
\end{align*}
\end{enumerate}
\end{proposition}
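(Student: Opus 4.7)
The plan is to recognize that the Berezin transform $B_{\vec\gamma}$ is literally the special case $S_{\negthinspace \va,\,\vb,\,\vc}$ with $\va=(n+1+\gamma_1,n+1+\gamma_2)$, $\vb=(\gamma_1,\gamma_2)$, and $\vc=2(n+1+\gamma_1,n+1+\gamma_2)$, as already noted right before the proposition statement. Thus each of the four parts (i)--(iv) will reduce to an application of the corresponding Theorem \ref{0t1}, \ref{0t2}, \ref{0t3}, or \ref{0t4}, and the only real work is to verify that, under this substitution, each characterizing condition simplifies to the form stated in Proposition \ref{4p2}.

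First I would dispose of the uniform condition $-q_i a_i<\beta_i+1$. Since $\gamma_i>-1$ gives $a_i=n+1+\gamma_i>0$, and since $q_i\ge 1$ and $\beta_i>-1$ we have $-q_ia_i<0<\beta_i+1$ automatically for every $i\in\{1,2\}$. Thus this constraint is vacuous for the Berezin transform and can be omitted from the characterizations.

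Next I would rewrite the remaining conditions. The condition $\alpha_i+1<p_i(b_i+1)$ becomes $\alpha_i+1<p_i(\gamma_i+1)$, and $\alpha_i=b_i$ becomes $\alpha_i=\gamma_i$, directly. For the main inequality involving $c_i$, observe that with our choices
\begin{align*}
n+1+a_i+b_i=n+1+(n+1+\gamma_i)+\gamma_i=2(n+1+\gamma_i)=c_i,
\end{align*}
so the inequality $c_i\le n+1+a_i+b_i+\frac{n+1+\beta_i}{q_i}-\frac{n+1+\alpha_i}{p_i}$ collapses to $\frac{n+1+\alpha_i}{p_i}\le\frac{n+1+\beta_i}{q_i}$. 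Similarly, $c_i<a_i+\frac{n+1+\beta_i}{q_i}$ reduces to $2(n+1+\gamma_i)<(n+1+\gamma_i)+\frac{n+1+\beta_i}{q_i}$, i.e. $n+1+\gamma_i<\frac{n+1+\beta_i}{q_i}$, and $c_i\le a_i+b_i-\alpha_i+\frac{n+1+\beta_i}{q_i}$ becomes $n+1+\alpha_i\le\frac{n+1+\beta_i}{q_i}$ after substituting $a_i+b_i=n+1+2\gamma_i$ and moving $\alpha_i$ with $c_i=2(n+1+\gamma_i)$.

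Putting these simplifications together, part (i) is the translation of Theorem \ref{0t1}, part (ii) of Theorem \ref{0t2}, part (iii) of Theorem \ref{0t3}, and part (iv) of Theorem \ref{0t4} (matching the four subcases \eqref{0e1}--\eqref{0e4} to the four displays). Since the substitution is purely algebraic and no additional estimate is required, there is no technical obstacle; the argument is a line-by-line verification and can be recorded in just a few lines.
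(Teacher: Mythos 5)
Your proposal is correct and matches the paper's own treatment: the paper likewise observes that $B_{\vec\gamma}=S_{\negthinspace(n+1+\gamma_1,n+1+\gamma_2),\,(\gamma_1,\gamma_2),\,2(n+1+\gamma_1,n+1+\gamma_2)}$ and deduces the proposition "immediately" from Theorems \ref{0t1}--\ref{0t4}, with exactly the algebraic simplifications you carry out (the vacuity of $-q_ia_i<\beta_i+1$ and the collapse $n+1+a_i+b_i=c_i$). Your verification is, if anything, more explicit than the paper's.
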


In addition, by Proposition \ref{4p2'} and using Theorems \ref{0t1}, \ref{0t2}, \ref{0t3}, and \ref{0t4}, we further conclude the following characterizations
of the $\lv-A^{\vq}_{\vec{\beta}}$ boundedness of the Bergman projection $P_{\vec \gamma}$.

\begin{proposition}\label{4p3}
Suppose $\vp:=(p_1,p_2)$, $\vq:=(q_1,q_2)\in[1,\fz)^2$, $p_+:=\max\{p_1,p_2\}$, $p_-:=\min\{p_1,p_2\}$, and $q_-:=\min\{q_1,q_2\}$.
\begin{enumerate}
\item[{\rm(i)}]
When $1<p_-\le p_+\le q_-<\fz$, then $P_{\vec \gamma}$ is bounded from $\lv$ to $A^{\vq}_{\vec{\beta}}$ if and only if, for any $i\in\{1,2\}$,
\begin{align*}
   \left\{
   \begin{aligned}
   &\alpha_i+1<p_i(\gamma_i+1),\\
   &\frac{n+1+\alpha_i}{p_i}\le\frac{n+1+\beta_i}{q_i}.
   \end{aligned}
   \right.
\end{align*}

\item[{\rm(ii)}]
When $\vp:=(1,p_2)$ and $\vq:=(q_1,q_2)$ satisfying $1<p_2\le q_-<\fz$, then
$P_{\vec \gamma}$ is bounded from $\lv$ to $A^{\vq}_{\vec{\beta}}$ if and only if
\begin{align*}
     \left\{
     \begin{aligned}
     &\alpha_1=\gamma_1,\ \ n+1+\gamma_1< \frac{n+1+\beta_1}{q_1}, \\
     &\alpha_2+1<p_2(\gamma_2+1),\ \ \frac{n+1+\alpha_2}{p_2}\le \frac{n+1+\beta_2}{q_2}.
     \end{aligned}
     \right.
     \end{align*}

\item[{\rm(iii)}]
When $\vp:=(p_1,1)$ and $\vq:=(q_1,q_2)$ satisfying $1<p_1\le q_-<\fz$, then
$P_{\vec \gamma}$ is bounded from $\lv$ to $A^{\vq}_{\vec{\beta}}$ if and only if
\begin{align*}
     \left\{
     \begin{aligned}
     &\alpha_1+1<p_1(\gamma_1+1),\ \ \frac{n+1+\alpha_1}{p_1}\le \frac{n+1+\beta_1}{q_1},\\
     &\alpha_2=\gamma_2,\ \ n+1+\gamma_2<\frac{n+1+\beta_2}{q_2}.
     \end{aligned}
     \right.
     \end{align*}

\item[{\rm(iv)}]
When $\vp:=(1,1)$ and $\vq:=(q_1,q_2)\in[1,\fz)^2$, then
$P_{\vec \gamma}$ is bounded from $\lv$ to $A^{\vq}_{\vec{\beta}}$ if and only if, for any $i\in\{1,2\}$,
\begin{align*}
     \left\{
     \begin{aligned}
     &\alpha_i<\gamma_i,\\
     &n+1+\alpha_i\le \frac{n+1+\beta_i}{q_i},
     \end{aligned}
     \right.
\end{align*}
or
\begin{align*}
     \left\{
     \begin{aligned}
     &\alpha_i=\gamma_i,\\
     &n+1+\gamma_i<\frac{n+1+\beta_i}{q_i},
     \end{aligned}
     \right.
\end{align*}
or
\begin{align*}
     \left\{
     \begin{aligned}
     &\alpha_1=\gamma_1,\ \ n+1+\gamma_1< \frac{n+1+\beta_1}{q_1},\\
     &\alpha_2<\gamma_2,\ \ n+1+\alpha_2\le \frac{n+1+\beta_2}{q_2},
     \end{aligned}
     \right.
\end{align*}
or
\begin{align*}
     \left\{
     \begin{aligned}
     &\alpha_1<\gamma_1,\ \ n+1+\alpha_1\le \frac{n+1+\beta_1}{q_1},\\
     &\alpha_2=\gamma_2,\ \ n+1+\gamma_2< \frac{n+1+\beta_2}{q_2}.
     \end{aligned}
     \right.
\end{align*}
\end{enumerate}
\end{proposition}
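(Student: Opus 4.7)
The plan is to deduce Proposition \ref{4p3} as a direct specialization of Theorems \ref{0t1}--\ref{0t4}, using the identification $P_{\vec{\gamma}}=T_{(0,0),(\gamma_1,\gamma_2),(n+1+\gamma_1,n+1+\gamma_2)}$ together with Proposition \ref{4p2'} to upgrade the codomain from $L^{\vec{q}}_{\vec{\beta}}$ to $A^{\vec{q}}_{\vec{\beta}}$. The necessity direction is immediate: since $A^{\vec{q}}_{\vec{\beta}}\subset L^{\vec{q}}_{\vec{\beta}}$ isometrically, any $L^{\vec{p}}_{\vec{\alpha}}$-to-$A^{\vec{q}}_{\vec{\beta}}$ bound for $P_{\vec{\gamma}}$ is automatically an $L^{\vec{p}}_{\vec{\alpha}}$-to-$L^{\vec{q}}_{\vec{\beta}}$ bound, so the parameter conditions from Theorems \ref{0t1}--\ref{0t4} apply with $a_i=0$, $b_i=\gamma_i$, $c_i=n+1+\gamma_i$. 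A substitution check shows that $-q_i a_i<\beta_i+1$ becomes $0<\beta_i+1$, which is automatic; that $c_i\le n+1+a_i+b_i+(n+1+\beta_i)/q_i-(n+1+\alpha_i)/p_i$ collapses to $(n+1+\alpha_i)/p_i\le (n+1+\beta_i)/q_i$; and that strict inequalities such as $c_1<a_1+(n+1+\beta_1)/q_1$ become $n+1+\gamma_1<(n+1+\beta_1)/q_1$, exactly matching the lists in parts (i)--(iv).

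For sufficiency, I assume the parameter conditions in one of the four regimes. Theorems \ref{0t1}--\ref{0t4} then yield the $L^{\vec{p}}_{\vec{\alpha}}$-to-$L^{\vec{q}}_{\vec{\beta}}$ boundedness of $P_{\vec{\gamma}}$, and only one thing remains: to certify that $P_{\vec{\gamma}}f\in H(\mathbb{B}_n\times\mathbb{B}_n)$ for every $f\in L^{\vec{p}}_{\vec{\alpha}}$, which will then place the output in $A^{\vec{q}}_{\vec{\beta}}$. I would handle this via a density argument: bounded compactly supported functions lie in $L^{\vec{p}}_{\vec{\alpha}}\cap L^{\vec{1}}_{\vec{\gamma}}$ and form a dense subspace of $L^{\vec{p}}_{\vec{\alpha}}$; for such $f$ Proposition \ref{4p2'} directly yields holomorphy of $P_{\vec{\gamma}}f$. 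Approximating a general $f\in L^{\vec{p}}_{\vec{\alpha}}$ by a sequence $f_k$ of this type, the already-established boundedness gives $P_{\vec{\gamma}}f_k\to P_{\vec{\gamma}}f$ in $L^{\vec{q}}_{\vec{\beta}}$. Passing to a subsequence with almost-everywhere convergence and using the local uniform boundedness of the reproducing kernel on compact subsets of $\mathbb{B}_n\times\mathbb{B}_n$, one upgrades this to locally uniform convergence, from which holomorphy of the limit follows by Weierstrass.

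The main obstacle, if any, is confined to this last transfer of holomorphy from the $L^{\vec{1}}_{\vec{\gamma}}$-input case treated in Proposition \ref{4p2'} to a general $L^{\vec{p}}_{\vec{\alpha}}$ input; the remainder of the proof is a purely mechanical algebraic simplification of the inequalities in Theorems \ref{0t1}--\ref{0t4} under the substitution $a_i=0$, $b_i=\gamma_i$, $c_i=n+1+\gamma_i$, which I would carry out separately in each of the four parts (i)--(iv) to recover the conditions stated in Proposition \ref{4p3}.
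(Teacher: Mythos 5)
Your proposal is correct and follows the paper's own (essentially one-line) argument: specialize Theorems \ref{0t1}--\ref{0t4} to $a_i=0$, $b_i=\gamma_i$, $c_i=n+1+\gamma_i$ and invoke Proposition \ref{4p2'} for holomorphy, and your algebraic reductions of the parameter conditions all check out. The density/approximation detour for holomorphy is unnecessary (and its ``upgrade to locally uniform convergence'' step is the only loose point): under the assumed conditions $\alpha_i+1<p_i(\gamma_i+1)$ for $p_i>1$ (resp.\ $\alpha_i\le\gamma_i$ when $p_i=1$), a direct H\"older estimate in each variable shows $(\gamma_i-\alpha_i/p_i)p_i'>-1$ and hence the continuous embedding $\lv\hookrightarrow L^{\vec 1}_{\vec\gamma}$, so Proposition \ref{4p2'} applies verbatim to every $f\in\lv$.
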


\bigskip

\noindent  \textbf{\Large {Statements and Declarations}}

\medskip
\noindent  \textbf{\large{Competing Interests:}} The authors have no relevant financial or non-financial interests to disclose.

\medskip
\noindent  \textbf{\large{Data Availability:}} Data sharing not applicable to this article as no datasets were generated or analysed during the current study.


\bigskip

\noindent Long Huang, Xiaofeng Wang and Zhicheng Zeng

\medskip

\noindent School of Mathematics and Information Science,
Key Laboratory of Mathematics and Interdisciplinary Sciences of the Guangdong Higher Education Institute,
Guangzhou University, Guangzhou, 510006, People's Republic of China

\smallskip

\noindent {\it E-mails}:
\texttt{longhuang@gzhu.edu.cn} (L. Huang)

\noindent\phantom{{\it E-mails:}}
\texttt{wxf@gzhu.edu.cn} (X. Wang)

\noindent\phantom{{\it E-mails:}}
\texttt{zhichengzeng@e.gzhu.edu.cn} (Z. Zeng)

\end{document}